\documentclass[a4paper,leqno,10pt]{amsart}

\raggedbottom
\hfuzz3pt
\usepackage{epsf,graphicx,epsfig}
\usepackage{amscd}
\usepackage{amsmath,latexsym,amssymb,amsthm}
\usepackage[nospace,noadjust]{cite}
\usepackage{textcomp}
\usepackage{setspace,cite}
\usepackage{lscape,fancyhdr,fancybox}
\usepackage[all,cmtip]{xy}
\usepackage{tikz}
\usetikzlibrary{shapes,arrows,decorations.markings}
\setlength{\unitlength}{0.4in}

\usepackage{graphicx}

\usepackage{color}
\usepackage{url}
\usepackage{enumerate}
\usepackage[mathscr]{euscript}

\setlength{\topmargin}{2mm}
\setlength{\textheight}{9.05in}
\setlength{\oddsidemargin}{.1in}
\setlength{\evensidemargin}{.1in}
\setlength{\textwidth}{6.05in}
\setlength{\textwidth}{6.05in}

  \theoremstyle{plain}

\swapnumbers
    \newtheorem{thm}{Theorem}[section]
    \newtheorem{prop}[thm]{Proposition}
   \newtheorem{lemma}[thm]{Lemma}
    \newtheorem{corollary}[thm]{Corollary}
    
    \newtheorem{subsec}[thm]{}
\theoremstyle{definition}
    \newtheorem{defn}[thm]{Definition}
    \newtheorem{exam}[thm]{Example}

\theoremstyle{remark}
     \newtheorem{remark}[thm]{Remark}

\title{}
\author{}
\date{}
\usepackage{amssymb}

\usepackage{hyperref}
\hypersetup{
	colorlinks,
	citecolor=blue,
	filecolor=black,
	linkcolor=blue,
	urlcolor=black
}

\begin{document}
\title{Nambu structures on Lie algebroids and their modular classes}

\author{Apurba Das}
\email{apurbadas348@gmail.com}
\address{Stat-Math Unit,
Indian Statistical Institute, Kolkata 700108,
West Bengal, India.}

\author{Shilpa Gondhali}
\email{shilpag@goa.bits-pilani.ac.in}
\address{Department of Mathematics,
 Birla Institute of Technology and Sciences, Pilani,
 K. K. Birla Goa Campus,
 Goa 403726, India.}

\author{Goutam Mukherjee}
\email{gmukherjee.isi@gmail.com}
\address{Stat-Math Unit,
Indian Statistical Institute, Kolkata 700108,
West Bengal, India.}

\subjclass[2010]{Primary: 17A32, 17A42.; Secondary: 53C15, 53D17.}
\keywords{Lie algebroid, Nambu-Poisson manifold, Leibniz algebra cohomology, Leibniz algebroid,\\ modular class.}

\thispagestyle{empty}

\begin{abstract}
We introduce the notion of the modular class of a Lie algebroid equipped with a Nambu structure. In particular, we recover the modular class of a Nambu-Poisson manifold $M$ with its Nambu tensor $\Lambda$ as the modular class of the tangent Lie algebroid $TM$ with Nambu structure $\Lambda.$ We show that many known properties of the modular class of a Nambu-Poisson manifold that of a Lie algebropid extend to the setting of a Lie algebroid with Nambu structure. Finally, we prove that for a large class of Nambu-Poisson manifolds considered as tangent Lie algebroids with Nambu structures, the associated modular classes are closely related to Evens-Lu-Weinstein modular classes of Lie algebroids.  
\end{abstract}
\maketitle



\vspace{0.4cm}

\section{Introduction}\label{1}
In 1973, Y. Nambu \cite{nambu} introduced the notion of Nambu structure, studied the main features of this new mechanics at the classical level and investigated the problem of quantization. In 1975, F. Bayen and M. Flato \cite{bayen-flato} observed that in the classical case this new mechanics in a three-dimensional phase space, as proposed by Nambu, is equivalent to a singular Hamiltonian mechanics. Later, in 1994, L. Takhtajan \cite{takhtajan} outlined the basic principles of a canonical formalism for the Nambu mechanics showing that it is based on the notion of a Nambu bracket, which generalizes the Poisson bracket to the multiple operation of higher order $n \geq 3.$ The author also introduced the fundamental identity (a generalization of the Jacobi identity),  as a consistency condition for the Nambu dynamics and introduced the notion of  Nambu-Poisson manifolds as phase spaces for Nambu mechanics which turned out to be more rigid than Poisson manifolds$-$phase spaces for the Hamiltonian mechanics. A smooth manifold equipped with a skew-symmetric $n$-bracket on the function algebra satisfying derivation property and the fundamental identity  is called a Nambu-Poisson manifold of order $n$. 

In \cite{wein3}, A. Weinstein introduced the notion of modular class of a Poisson manifold. Given a volume form of an oriented Poisson manifold $M$, the map which associates to a function the divergence of the corresponding Hamiltonian vector field is called the modular vector field. It turns out that it is a $1$-cocycle in the Poisson cohomology of $M$ and its class, called the modular class, is independent of the chosen volume form. The notion of modular class of Poisson manifolds  was extended to a more general context, the so called Lie algebroid with Poisson structure (\cite{yks1},\cite{yks2}). Let $A$ be a Lie algebroid. A Poisson structure on $A$ is a $2$-multisection of $A$ such that its Gerstenhaber bracket with itself vanishes. Kosmann-Schwarzbach \cite{yks1} showed that the modular class arises due to the existence of two generating operators for a BV algebra.

It is well-known that for a Nambu-Poisson manifold $M$ of order $n$, the bundle $\wedge^{n-1}T^*M$ is a Leibniz algebroid \cite{ibanez}, a non skew-symmetric analogue of a Lie algebroid. Thus the space $\Omega^{n-1}(M)$ carries a Leibniz algebra structure \cite{loday93}. In \cite{ibanez}, the authors introduced the notion of the modular class of a Nambu-Poisson manifold which generalizes the work of Weinstein \cite{wein3} for the Poisson case. This class appears as an element of the first cohomology group of the Leibniz algebra cohomology of $\Omega^{n-1}(M)$ with trivial representation in $C^\infty(M)$.

A. Wade \cite{wade} introduced the notion of a Nambu structure of order $n\geq 2$ on a Lie algebroid $A$ generalizing the notion of a Lie algebroid with Poisson structure as well as that of a Nambu-Poisson manifold and proved that like the classical case, the bundle $\wedge^{n-1}A^\ast$ is a Leibniz algebroid. Around the same time, Y. Hagiwara \cite{hagi} proposed a different definition of Nambu structure of order $n \geq 3$ on a Lie algebroid, generalizing the notion of a Nambu-Poisson manifold. It may be mentioned here that this definition makes sense even for $n= 2.$ For instance, the tangent bundle $TM$ of a Poisson manifold $(M, \pi)$ is an example according to this definition provided the Poisson tensor $\pi \in \Gamma(\wedge^2TM)$ is locally decomposable (cf. Remark \ref{equivalence-of-defn} and Proposition \ref{proof-equivalence-of-defn}).  

The aim of the present paper is to study Lie algebroid with Nambu structure and to introduce its modular class. For this we consider the definition of a Nambu structure of order $n > 2$ on a Lie algebroid originally proposed by Y. Hagiwara \cite{hagi}.  Roughly speaking, given a Lie algebroid $A$, an $n$-multisection $\Pi \in \Gamma(\wedge^{n}A)$ $(\mbox{rank}~A \geq  n > 2)$ is called a Nambu structure on $A$ of order $n$ if it satisfies certain condition (cf. Definition \ref{Def}). A Nambu structure on a Lie algebroid $A$ is called a  maximal Nambu structure if the order of the Nambu structure is equal to the rank of $A$. It is known that Nambu tensor of a Nambu-Poisson manifold of order $> 2 $ is locally decomposable (\cite{duf-zung}, \cite{ibanez}). For Lie algebroids with Nambu structures, the same result holds true provided the space of sections $\Gamma{A^*}$ is locally generated by elements of the form $d_Af$, $f \in C^\infty(M)$, where $d_A$ is the differential of the cochain complex with trivial coefficients associated to the Lie algebroid $A$ (cf. Lemma 3.9, \cite{hagi}). We prove that Definition \ref{Def} considered in the present paper coincides with the one defined in \cite{wade} when the Nambu structure is locally decomposable (Proposition \ref{proof-equivalence-of-defn}). 

Next, we show that given a Lie algebroid $A$ with a Nambu structure of order $n$, the bundle $\wedge^{n-1}A^*$ has a Leibniz algebroid structure. This Leibniz algebroid structure is distinct from the one obtained in \cite{wade} and it reduces to the Leibniz algebroid obtained in \cite{ibanez}, in the case of the tangent Lie algebroid $A = TM$ of a Nambu-Poisson manifold $M$ (cf. Remark \ref{leibniz algebra-comparison}). It turns out that when the Nambu structure is maximal, the associated Leibniz algebroid is a Lie algebroid (cf. Proposition \ref{leib-lie}). It may be remarked that in \cite{grabowski-poncin} (see also \cite{jubin-poncin-uchino}), the authors introduced a notion of Loday algebroid which is more geometric than Leibniz algebroid and appears in many places. Lie algebroids, Courant algebroids, Leibniz algebroids associated to  Nambu-Poisson manifolds are examples of Loday algebroids. We observe in this paper that the above mentioned Leibniz algebroid $\wedge^{n-1}A^*$ associated to a Lie algebroid $A$ with a Nambu structure turns out to be a Loday algebroid (see Remark \ref{loday-algebroid}).

For an oriented Lie algebroid $A$ with a Nambu structure of order $n$, we define the notion of a modular tensor field $M^\mu \in \Gamma(\wedge^{n-1}A)$ associated to a chosen non-vanishing section $\mu \in \Gamma(\wedge^\text{top} A^*)$ giving the orientation (cf. Definition \ref{mod-Def}). We show that the modular tensor field defines a $1$-cocycle in the Leibniz algebra cohomology of $\Gamma(\wedge^{n-1}A^*)$ with coefficients in $C^\infty(M)$ and the corresponding cohomology class does not depend on the chosen section. We call this cohomology class the {\it modular class} of the Lie algebroid with the given Nambu structure (Definition \ref{def-modular-class}). We show that as in the classical case, this modular class can be viewed as an obstruction to the existence of a density invariant under all Hamiltonian $A$-sections (cf. Proposition \ref{modular-class-obstruction}).

It may be remarked that if we start with the definition of a Lie algebroid with a Nambu structure as introduced in \cite{wade} and assume that $A$ is oriented with $\mu \in \Gamma (\wedge^{top}A^\ast),$ a nowhere vanishing element representing the orientation, then, as before we may define the notion of a modular tensor field $M^{\mu} \in \Gamma (\wedge^{n-1}A)$ associated to $\mu.$ However, it is not clear why $M^\mu$ should be a $1$-cocycle in the Leibniz algebra cohomology of the Leibniz algebra $\Gamma(\wedge^{n-1}A^\ast)$ considered by Wade.  

In \cite{elw}, the authors introduced the notion of characteristic class of a Lie algebroid $A$ with a representation on a line bundle $L$ and used it to define the notion of modular class of a Lie algebroid. We show that for a large class of Nambu-Poisson manifolds, the notion of modular class is closely related to the notion of modular class introduced in \cite{elw}.

The paper is organized as follows. In Section \ref{2}, we recall some basic definitions and results that will be used in the paper. In Section \ref{3}, we recall the definition of Nambu structure of order $n >2$ on a Lie algebroid proposed by Y. Hagiwara and provide a collection of examples. Then, we discuss decomposability of Nambu structure and compare our definition with the one introduced in \cite{wade}. We prove that there is a Leibniz algebroid, which turns out to be a Loday algebroid also, associated to a given Lie algebroid with Nambu structure. In Section \ref{4}, we introduce the notion of a modular tensor field of a Lie algebroid with a Nambu structure, study its properties and define the modular class of a Lie algebroid with Nambu structure. We also compute modular tensor field and modular class for a class of examples. Finally, in Section \ref{5}, we prove that for a large class of Nambu-Poisson manifolds considered as tangent Lie algebroids with Nambu structures, the associated modular classes are closely related to Evens-Lu-Weinstein modular classes of Lie algebroids \cite{elw} (cf. Theorem \ref{modular-compare}). 

\section{Preliminaries}\label{2}
In this section, we recall some definitions, fix notations which we use throughout the paper and adapt a result known for manifolds \cite{illmp}, in the context of oriented Lie algebroids.

The notion of a Nambu-Poisson manifold is a  higher order generalization of a Poisson manifold and is defined as follows (\cite{duf-zung}, \cite{ibanez}, \cite{takhtajan}).
\begin{defn}
Let $M$ be a smooth manifold. A {\it Nambu-Poisson structure of order $n$} on $M$ is a skew-symmetric $n$-multilinear mapping
\begin{align*}
 \{~, \ldots, ~\} : C^\infty (M) \times \cdots \times C^\infty (M) \rightarrow C^\infty (M)
\end{align*}
satisfying the following conditions:
\begin{enumerate}
 \item { Leibniz rule}:\\
  $\{f_1, \ldots, f_{n-1}, fg\} = f \{f_1, \ldots, f_{n-1}, g \} + \{f_1, \ldots, f_{n-1}, f \} g$;
 \item { Fundamental identity}:\\
 $\{f_1, \ldots ,f_{n-1}, \{g_1,\ldots, g_n\}\} = \sum_{i=1}^n  \{g_1,\ldots,g_{i-1},\{f_1,\ldots,f_{n-1},g_i\},\ldots, g_n\}$,
\end{enumerate}
for all $f_i, g_j, f, g \in C^\infty (M).$ A manifold together with a Nambu-Poisson structure as above is called a {\it Nambu-Poisson manifold of order $n$}.
\end{defn}

A Nambu-Poisson manifold of order $2$ is nothing but a Poisson manifold \cite{vais-book}. Since the bracket above is skew-symmetric and satisfies Leibniz rule, there exists an $n$-vector field $\Lambda \in \Gamma({\wedge^n TM})$ such that $\{f_1, \ldots, f_n\} = \Lambda (df_1, \ldots, df_n),$ 
for all $ f_1, \ldots, f_n \in C^\infty (M)$. Given any $(n-1)$ functions $ f_1, \ldots, f_{n-1} \in C^\infty (M)$, the {\it Hamiltonian vector field} $X_{f_1...f_{n-1}}$
associated to these functions is defined by $X_{f_1...f_{n-1}} = \Lambda^{\sharp}(df_1 \wedge \ldots \wedge df_{n-1}),$ where $\Lambda^{\sharp} : \wedge^{n-1}T^*M \rightarrow TM,~~ \omega \mapsto \iota_{\omega}\Lambda$ is the bundle map induced by $\Lambda$, $\iota$ being the contraction operator. In other words, the Hamiltonian vector field is defined by $ X_{f_1...f_{n-1}} (g) = \{f_1, \ldots ,f_{n-1}, g\},~~\mbox{for}~~ g \in C^\infty(M).$

In terms of the Lie derivative operator $\mathcal L$, the fundamental identity can be rephrased as
\begin{align}\label{lie-der-nambu}
 \mathcal{L}_{X_{f_1...f_{n-1}}} \Lambda = 0,
\end{align}
for all $f_1, \ldots, f_{n-1} \in C^\infty(M),$  which shows that every Hamiltonian vector field preserves the Nambu tensor. A Nambu Poisson manifold is often denoted by $(M, \{~, \ldots, ~\})$ or simply by $(M, \Lambda)$ . Moreover, note that the fundamental identity is equivalent to
$$[X_{f_1...f_{n-1}}, X_{g_1...g_{n-1}}] = \sum_{i=1}^{n-1} X_{g_1... g_{i-1}\{f_1, \ldots , f_{n-1}, g_i\}...g_{n-1}},$$ where the bracket on the left is the Lie algebra bracket of vector fields.

The following result describes the local structure of a Nambu-Poisson manifold (\cite{duf-zung}, \cite{ibanez}).
\begin{thm}\label{local-structure-NP-manifold}
Let $M$ be a smooth manifold of dimension $m$. An $n$-vector field $\Lambda$ ($3 \leq n \leq m$) defines a Nambu-Poisson structure on $M$ if and only if for all $x \in M$ with $\Lambda (x) \neq 0,$ there exists a local chart $(U; x_1, \ldots ,x_n, x_{n+1}, \ldots , x_m)$ around $x$ such that $\Lambda|_U = \frac{\partial}{\partial x_1} \wedge \cdots \wedge \frac{\partial}{\partial x_n}.$  
\end{thm}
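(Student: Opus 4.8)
The plan is to prove the two implications separately. For the \emph{if} direction I would just observe that if $\Lambda|_U=\partial/\partial x_1\wedge\cdots\wedge\partial/\partial x_n$ on a chart $(U;x_1,\dots,x_m)$, then the associated bracket is the Jacobian bracket $\{f_1,\dots,f_n\}=\det\!\big(\partial f_i/\partial x_j\big)_{1\le i,j\le n}$, which is a derivation in each slot and satisfies the fundamental identity by the classical computation for Jacobian brackets (cf.\ \cite{takhtajan}), the coordinates $x_{n+1},\dots,x_m$ being Casimirs. For the converse the assertion is local, so I would fix $x\in M$ with $\Lambda(x)\neq0$, work on a small neighbourhood of $x$, and carry it out in three stages: (1) prove that $\Lambda$ is locally decomposable; (2) prove that the rank-$n$ distribution it spans is integrable and apply the Frobenius theorem; (3) remove the remaining conformal factor by one further change of coordinates. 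The whole converse is the local structure/decomposability theorem of \cite{duf-zung} (see also \cite{ibanez}), so in the paper I would either cite it or reproduce the argument below.

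Stage (1) is where I expect the only real obstacle. Starting from the reformulation of the fundamental identity as $\mathcal L_{X_{f_1\cdots f_{n-1}}}\Lambda=0$, I would extract its purely pointwise (algebraic) consequence at $x$; this amounts to the condition that $v\wedge\Lambda_x=0$ in $\wedge^{n+1}T_xM$ for every $v$ in the characteristic subspace $\operatorname{Im}\Lambda^\sharp_x$ (equivalently $(\iota_\alpha\Lambda_x)\wedge\Lambda_x=0$ for all $\alpha\in\wedge^{n-1}T^*_xM$). The delicate point is to separate this clean algebraic identity from the part of the fundamental identity that involves derivatives of $\Lambda$; once it is in hand, an elementary exterior-algebra argument shows that for $\Lambda_x\neq0$ it forces $\dim\operatorname{Im}\Lambda^\sharp_x=n$, so that $\Lambda_x$ is a top-degree element of $\wedge^\bullet\!\big(\operatorname{Im}\Lambda^\sharp_x\big)$, hence decomposable, $\Lambda_x=v_1\wedge\cdots\wedge v_n$. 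The hypothesis $n\ge3$ enters precisely in the claim that the fundamental identity \emph{does} force this wedge condition pointwise; for $n=2$ it does not, as the Poisson bivector of a symplectic form on $\mathbb R^{2k}$ ($k\ge2$) shows, and indeed no such normal form exists there. Granting decomposability at $x$, continuity and nonvanishing of $\Lambda$ near $x$ make $\operatorname{Im}\Lambda^\sharp$ a rank-$n$ subbundle $D$ on a neighbourhood, and $\Lambda$ a nowhere-zero section of the line bundle $\wedge^nD$, so $\Lambda=X_1\wedge\cdots\wedge X_n$ for a local frame $X_1,\dots,X_n$ of $D$.

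Stages (2) and (3) are soft. By the fundamental identity the Hamiltonian vector fields close under the Lie bracket, $[X_{f_1\cdots f_{n-1}},X_{g_1\cdots g_{n-1}}]=\sum_i X_{g_1\cdots\{f_1,\dots,f_{n-1},g_i\}\cdots g_{n-1}}$, and near $x$ they span $D=\operatorname{Im}\Lambda^\sharp$; hence $D$ is involutive of constant rank $n$, and the Frobenius theorem supplies a chart $(U;x_1,\dots,x_n,x_{n+1},\dots,x_m)$ around $x$ with $D=\langle\partial/\partial x_1,\dots,\partial/\partial x_n\rangle$, so that $\Lambda|_U=g\,\partial/\partial x_1\wedge\cdots\wedge\partial/\partial x_n$ for some nowhere-vanishing $g\in C^\infty(U)$. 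Finally, shrinking $U$ to a coordinate box and setting $y_1:=\int_0^{x_1}g(t,x_2,\dots,x_m)^{-1}\,dt$ and $y_j:=x_j$ for $2\le j\le m$ defines a new chart (since $\partial y_1/\partial x_1=g^{-1}\neq0$) in which $\partial/\partial y_1=g\,\partial/\partial x_1$ while the $\partial/\partial x_1$-components of $\partial/\partial y_j$ ($j\ge2$) are annihilated in the wedge product, so that $\Lambda=\partial/\partial y_1\wedge\partial/\partial y_2\wedge\cdots\wedge\partial/\partial y_n$; renaming $y$ as $x$ completes the proof.
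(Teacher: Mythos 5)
The paper does not prove this theorem at all: it is quoted as the known local-structure result for Nambu--Poisson manifolds, with the proof delegated entirely to \cite{duf-zung} and \cite{ibanez}. So there is no in-paper argument to measure you against; what you have done is reconstruct the standard proof from those references, which is strictly more than the paper attempts. Your reconstruction is architecturally correct: the ``if'' direction via the Jacobian-determinant bracket is right (and note that at points where $\Lambda$ vanishes both sides of the fundamental identity vanish identically, so only the open set $\{\Lambda\neq 0\}$ matters); stage (2) is sound because the Hamiltonian vector fields span $\operatorname{Im}\Lambda^{\sharp}$ pointwise and close under bracket, and decomposability forces the rank to be locally constant equal to $n$ near a point where $\Lambda\neq 0$; and the rescaling in stage (3) checks out, since $\partial/\partial x_{1}=g^{-1}\partial/\partial y_{1}$ and the extra $\partial/\partial y_{1}$-components of the remaining factors are killed in the wedge. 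The one place where your text is a plan rather than a proof is exactly the place you flag: extracting from the fundamental identity the \emph{pointwise} Pl\"ucker-type condition $(\iota_{\alpha}\Lambda_{x})\wedge\Lambda_{x}=0$, and proving that for $n\geq 3$ this quadratic condition is in fact forced (it is not for $n=2$, as you correctly observe). That extraction --- typically done by evaluating the identity on functions with prescribed first and second jets at $x$ so as to separate the terms involving derivatives of $\Lambda$ from the purely algebraic ones, followed by the linear-algebra lemma that the resulting relations imply decomposability --- is the entire technical content of the theorem (Gautheron, Alekseevsky--Guha, Nakanishi; see Chapter 8 of \cite{duf-zung}). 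If you intend to ``reproduce the argument'' rather than cite it, that lemma must be written out; as a citation-backed sketch, however, your proposal is correct and consistent with how the paper itself treats the result.
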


\begin{remark}\label{singular-foliation-NP}
The Nambu-Poisson structure is closely related to singular foliations. Let $(M, \Lambda)$ be a Nambu-Poisson manifold of order $n$. Recall that (Chapter 6, \cite{duf-zung}) there is a (singular) distribution $\mathcal D (M)= \cup_{m\in M}\mathcal D_m(M)$ on $M,$ where $\mathcal D_m (M)\subset T_m M$ is the subspace of the tangent space at $m$ generated by all Hamiltonian vector fields on $M.$ It is clear from the above expression of the fundamental identity that this distribution is integrable and its leaves are either $n$-dimensional submanifolds endowed with a volume form or just singletons. 
\end{remark}  

Recall that the notion of Lie algebroid is a generalization of tangent bundles of  manifolds as well as Lie algebras \cite{mac-book}.  

\begin{defn}\label{lie-algbd}
A {\it Lie algebroid} $(A, [~, ~], \rho)$ over a smooth manifold $M$ is a smooth vector bundle $A$ over
$M$ together with a Lie
algebra structure $[~, ~]$ on the space $\Gamma{A}$ of the smooth sections of $A$ and a bundle map $\rho : A \rightarrow T M $ , called the {\it anchor}, such that
\begin{enumerate}
\item $[X, f Y ] = f [X, Y ] + (\rho(X)f)Y$,
\item $\rho([X, Y]) = [\rho (X), \rho (Y)],$
\end{enumerate}
for all $X,~ Y \in \Gamma{A} $ and $f \in C^\infty (M)$.
\end{defn}

Any Lie algebra can be considered as a Lie algebroid over a point. The tangent bundle of a smooth manifold is a Lie algebroid with the usual Lie bracket of vector fields. For any Poisson manifold, its cotangent bundle carries a natural Lie algebroid structure (\cite{mac-book}, \cite{vais-book}).

Recall that given a Lie algebroid $(A, [~, ~], \rho)$, the Lie bracket on $\Gamma A$ can be extended to the so called Schouten bracket $[~,~]$ on the exterior algebra $\Gamma (\wedge^ {\bullet} A)$ of multisections of $A.$ The space $\Gamma (\wedge^ {\bullet} A)$ together with this extended bracket form a Schouten algebra (also called Gerstenhaber algebra). It is characterized by the following properties: for $f \in C^\infty(M)$, $X, Y \in \Gamma A$ and $P, Q, R  \in \Gamma (\wedge^ {\bullet} A),$
\begin{enumerate}\label{schouten-bracket}
\item $[P, Q] \in \Gamma (\wedge^{|P| +|Q|-1}A);$
\item $[X, f]= \rho (X)(f);$
\item $[X, Y]$ is the Lie bracket on $\Gamma A;$
\item $[P, Q] = -(-1)^{(|P|-1)(|Q|-1)}[Q, P];$
\item $[P, Q\wedge R]= [P,Q] \wedge R +(-1)^{(|P|-1)|Q|}Q\wedge [P,R],$\\ where $|P|$ denotes the degree of $P$.
\end{enumerate}  

Moreover, $\Gamma(\wedge^{\bullet} A^*)$ together with the Lie algebroid differential $d_A$ forms a differential graded algebra, where the differential $d_A$ has the explicit formula similar to the de Rham differential formula
\begin{align*}
 (d_A \alpha)(X_0, X_1, \ldots, X_n) =& \sum_{i=0}^n (-1)^{i} \rho (X_i) \alpha(X_0, \ldots , \hat {X_i}, \ldots , X_n) \\
& + \sum_{i < j} (-1)^{i+j} \alpha ([X_i,X_j], X_0, \ldots , \hat {X_i},\ldots , \hat {X_j}, \ldots , X_n),
\end{align*}
where $\alpha \in \Gamma(\wedge ^n A^*)$ and $X_0, \ldots, X_n \in \Gamma A$ and $\hat{X_i}$ in any term above means that $X_i$ is missing from the expression. For $A= TM$, the usual tangent bundle Lie algebroid, the differential $d_A$ turns out to be the exterior differential $d$ of the de Rham complex of $M$. 

For $\alpha, ~\beta \in \Gamma (\wedge^\bullet A^\ast),$ the Lie algebroid differential $d_A$ satisfies
$$d_A(\alpha\wedge \beta) = d_A(\alpha) \wedge \beta + (-1)^{|\alpha|}\alpha \wedge d_A (\beta).$$ 

Next, we briefly recall the definitions and properties of the contraction operators and the Lie derivative operators, details may be found in \cite{ elw}, \cite{mac-book}. Let $A$ be a Lie algebroid. Let $X \in \Gamma A.$
\begin{defn}\label{Lie derivative-contraction}
\begin{enumerate}
\item The contraction operator $\iota_X : \Gamma (\wedge^\bullet A^\ast) \rightarrow \Gamma (\wedge^{\bullet-1} A^\ast)$ is defined as follows:
for $\alpha \in \Gamma (\wedge^n A^\ast)$ and $ X_1, \ldots, X_{n-1} \in \Gamma A,$
$$\iota_X\alpha(X_1, \ldots, X_{n-1}) = \alpha (X, X_1, \ldots, X_{n-1}).$$  For $X_1, \ldots, X_k \in \Gamma A,$ the contraction with respect to the monomial $X_1\wedge \cdots \wedge X_k$ is then defined  by $ \iota _{X_1\wedge \cdots \wedge X_k} = \iota_{X_k} \circ \cdots \circ \iota_{X_1}.$ This extends to give a well-defined map $\iota_P: \Gamma (\wedge^\bullet A^\ast) \rightarrow \Gamma(\wedge^{\bullet-k}A^\ast),$ for $P \in \Gamma(\wedge^kA).$

\item For any $\alpha \in \Gamma A^\ast,$ the contraction $\iota_{\alpha} : \Gamma(\wedge^\bullet A) \rightarrow \Gamma(\wedge^{\bullet-1}A)$ is defined by 
$$\iota_{\alpha}P (\beta) = P(\alpha\wedge \beta),~~P \in \Gamma(\wedge^\bullet A),~~\beta \in \Gamma (\wedge^{\bullet-1}A^\ast).$$
For $\alpha_r \in \Gamma A^\ast,~~r = 1, \ldots, k,$ the contraction with respect to $\alpha_1 \wedge \cdots \wedge \alpha_k$ is then defined by $\iota_{\alpha_1 \wedge \cdots \wedge\alpha_k}= \iota_{\alpha_k}\circ \cdots \circ \iota_{\alpha_1}.$  This extends to a well-defined map $\iota_{\eta}: \Gamma(\wedge^\bullet A) \rightarrow \Gamma(\wedge^{\bullet-k}A),$ for any $\eta \in \Gamma(\wedge^k A^\ast).$ 

\item The Lie derivative $\mathcal L_X : \Gamma (\wedge^\bullet A^\ast) \rightarrow \Gamma (\wedge^\bullet A^\ast)$ is a degree zero map and is defined by the following formula
$$ \mathcal{L}_X(\alpha)(X_1,\ldots, X_n)= \mathcal L_X(\alpha(X_1,\ldots, X_n))
-\displaystyle{\sum_{i=1}^n}\alpha(X_1,\ldots,[X,X_i],\ldots,X_n)$$
where $\alpha \in\Gamma (\wedge^n A^\ast)$ and $X_1, \ldots X_n \in \Gamma A.$ 
More generally, for $P \in \Gamma(\wedge^iA)$ the generalized Lie derivative $\mathcal L_P : \Gamma(\wedge^\bullet A^\ast) \rightarrow \Gamma(\wedge^{\bullet-i+1}A^\ast)$ with respect to the multisection $P$ is defined by the bracket for graded endomorphisms as follows:
$$\mathcal L_P = [\iota_P, d_A] = \iota_P\circ d_A -(-1)^id_A \circ \iota_P.$$  

\item The Lie derivative 
$$\mathcal L_X : \Gamma (\wedge^\bullet A) \rightarrow \Gamma (\wedge^\bullet A)$$ is defined by the Schouten bracket on $\Gamma (\wedge^\bullet A),$ namely, $\mathcal L_X (P) = [X, P],$ for $P \in \Gamma (\wedge^\bullet A).$ Explicitly, for $P \in \Gamma (\wedge^nA)$ and $\alpha_1, \ldots, \alpha_n \in \Gamma A^\ast,$
$$(\mathcal L_XP)(\alpha_1, \ldots , \alpha_n) = \mathcal L_X(P(\alpha_1, \ldots , \alpha_n)) - \sum_{i=1}^nP(\alpha_1, \ldots , \mathcal L_X(\alpha_i), \ldots, (\alpha_n)).$$
\end{enumerate}
\end{defn}

The contraction operators as defined above satisfy the following properties:
$$\iota_X(f\alpha) = f\iota_X(\alpha),~~\iota_{fX}(\alpha) = f\iota_X(\alpha),~~ \iota_X\circ \iota_Y = -\iota_Y\circ\iota_X,~~\alpha \in \Gamma(\wedge^\bullet A^\ast).$$

\begin{prop}\label{properties-Lie derivative-contraction}
The Lie derivative operators satisfy the following properties. Let  $X, Y \in \Gamma A,$ $P, Q \in \Gamma(\wedge^\bullet A),$ $\alpha, \beta \in \Gamma(\wedge^\bullet A^\ast)$ and $f \in C^\infty(M).$
\begin{enumerate}
\item $\mathcal L _X(P \wedge Q) = \mathcal L_X( P) \wedge Q + P \wedge \mathcal L _X(Q).$
\item $[\mathcal L _X, \mathcal L _Y](P) := (\mathcal L _X\circ\mathcal L _Y-\mathcal L _Y \circ \mathcal L _X)(P) = \mathcal L _{[X, Y]}(P).$
\item $\mathcal L _X (fP) = f\mathcal L _X(P) +\rho(X)(f)P.$
\item $\mathcal L _{fX} (P) = f\mathcal L _X(P)- X \wedge \iota_{d_Af}(P).$
\item $\mathcal L _X (f\alpha) = f\mathcal L _X(\alpha) +\rho(X)(f)\alpha.$
\item $\mathcal L_{fX}\alpha = f \mathcal L_X\alpha + d_Af \wedge \iota_X\alpha.$
\item $\mathcal L _{[X, Y]}(\alpha) = (\mathcal L _X\circ\mathcal L _Y-\mathcal L _Y \circ \mathcal L _X)(\alpha).$
\item $[\mathcal L _X, \iota_Y] :=  \mathcal L _X\circ \iota_Y-\iota_Y\circ \mathcal L _X = \iota_{[X, Y]}.$
\item $\mathcal L_X = \iota_X d_A + d_A\iota_X.$
\item $\mathcal L_X \circ d_A = d_A \circ \mathcal L_X.$
\end{enumerate}
\end{prop}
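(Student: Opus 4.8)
The plan is to split the ten identities into two groups: the four identities (1)--(4), which concern multisections of $A$ and are read off from the axioms of the Schouten (Gerstenhaber) algebra $(\Gamma(\wedge^\bullet A),\wedge,[~,~])$ recalled above, and the six identities (5)--(10) on sections of $\wedge^\bullet A^\ast$, which are organized around Cartan's magic formula, identity (9). Once (9) and, independently, (8) are established, the remaining form identities follow by short formal manipulations using also $d_A^2=0$, the Leibniz rule for $d_A$, and $\iota_{fX}=f\iota_X$.

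For the multisection identities recall that $\mathcal L_X(P)=[X,P]$. Identity (1) is the special case $|X|=1$ of the graded Leibniz rule $[P,Q\wedge R]=[P,Q]\wedge R+(-1)^{(|P|-1)|Q|}Q\wedge[P,R]$, the sign being $+1$ since $|X|-1=0$. Identity (2) is precisely the graded Jacobi identity of the Gerstenhaber bracket for the two degree-one elements $X,Y$: as $(|X|-1)(|Y|-1)=0$, it reads $[X,[Y,P]]-[Y,[X,P]]=[[X,Y],P]$. Identity (3) follows by writing $fP=f\wedge P$ with $f\in\Gamma(\wedge^0A)=C^\infty(M)$, applying the graded Leibniz rule and using $[X,f]=\rho(X)(f)$. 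Identity (4) uses a preliminary fact, namely that the Schouten axioms yield $[f,P]=-\iota_{d_Af}P$ (by induction on decomposable multivectors); then, using graded skew-symmetry to move $fX=f\wedge X$ into the second slot of the bracket and applying the graded Leibniz rule, $\mathcal L_{fX}(P)=[fX,P]=f[X,P]-[P,f]\wedge X=f\,\mathcal L_X(P)-X\wedge\iota_{d_Af}(P)$ after collecting signs.

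For the form identities the key step is (9). Both $\mathcal L_X$, given by its explicit formula on $\Gamma(\wedge^\bullet A^\ast)$, and $\iota_X\circ d_A+d_A\circ\iota_X$ are $\R$-linear local operators of degree zero, and both are derivations of the exterior algebra: $\mathcal L_X$ because $\rho(X)$ is a derivation of $C^\infty(M)$, and $\iota_Xd_A+d_A\iota_X$ because it is the graded commutator $[\iota_X,d_A]$ of two anti-derivations. On $C^\infty(M)$ both equal $\rho(X)$, and on $\alpha\in\Gamma A^\ast$ a direct computation with the Koszul formula for $d_A$ shows both send $\alpha$ to $Y\mapsto\rho(X)(\alpha(Y))-\alpha([X,Y])$. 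Since these operators are local and $A$ is locally trivial, it suffices to verify (9) over a trivializing open set, where $\Gamma(\wedge^\bullet A^\ast)$ is generated in degrees $0$ and $1$ and hence a degree-zero derivation is determined by its restriction there; thus the two operators coincide. Identity (10) then follows from $d_A^2=0$, since $\mathcal L_X\circ d_A=(\iota_Xd_A+d_A\iota_X)d_A=d_A\iota_Xd_A=d_A(\iota_Xd_A+d_A\iota_X)=d_A\circ\mathcal L_X$. Identities (5) and (6) follow by expanding $\mathcal L_X(f\alpha)$ and $\mathcal L_{fX}(\alpha)$ through (9), using $d_A(f\beta)=d_Af\wedge\beta+f\,d_A\beta$ and $\iota_{fX}=f\iota_X$. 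Identity (8) is obtained by checking that $[\mathcal L_X,\iota_Y]$ and $\iota_{[X,Y]}$ are both anti-derivations of degree $-1$ annihilating $C^\infty(M)$, with $[\mathcal L_X,\iota_Y]\alpha=\rho(X)(\alpha(Y))-(\mathcal L_X\alpha)(Y)=\alpha([X,Y])=\iota_{[X,Y]}\alpha$ for $\alpha\in\Gamma A^\ast$, and then invoking the same reduction to a trivializing set. Finally, identity (7): applying (9) to the section $[X,Y]$, rewriting $\iota_{[X,Y]}=\mathcal L_X\iota_Y-\iota_Y\mathcal L_X$ by (8), and commuting $d_A$ past $\mathcal L_X$ by (10), one regroups to get $\mathcal L_{[X,Y]}=\mathcal L_X(\iota_Yd_A+d_A\iota_Y)-(\iota_Yd_A+d_A\iota_Y)\mathcal L_X=\mathcal L_X\circ\mathcal L_Y-\mathcal L_Y\circ\mathcal L_X$.

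All of this is standard Cartan calculus on a Lie algebroid, with no genuine difficulty. The two places needing care are the sign bookkeeping in identity (4) (and in the alternative direct verification of Cartan's formula via the Koszul formula), and the legitimacy of the ``reduction to generators'' used for (8) and (9); the latter is justified because every operator involved is local and $A$ is locally trivial, so the identities may be verified over a trivializing open set, where $\wedge^\bullet A^\ast$ is generated in degrees $0$ and $1$. I expect the signs in (4) to be the most error-prone point, though they are entirely mechanical.
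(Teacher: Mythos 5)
Your proposal is correct. Note that the paper itself offers no proof of this proposition: it is stated as a recollection of standard facts about the Cartan calculus on a Lie algebroid, with the details deferred to the cited references (\cite{elw}, \cite{mac-book}). Your derivation is the standard one and is sound: identities (1)--(4) do follow from the Gerstenhaber-algebra axioms for the Schouten bracket (with the auxiliary fact $[f,P]=-\iota_{d_Af}P$ and the sign bookkeeping in (4) checking out under the paper's convention $[P,Q]=-(-1)^{(|P|-1)(|Q|-1)}[Q,P]$), and the reduction of (8) and (9) to degrees $0$ and $1$ is legitimate because all operators involved are local and $\Gamma(\wedge^{\bullet}A^{\ast})$ is locally generated in those degrees over a trivializing neighbourhood. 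The only point worth flagging is that the graded Jacobi identity invoked for (2) is not among the five characterizing properties the paper lists for the Schouten bracket, so one should either cite it as part of the definition of a Gerstenhaber algebra (as you do) or derive (2) by induction on decomposables from (1); both are routine.
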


\begin{remark}\label{properties-top-degree}
In particular, for $f \in C^\infty(M)$, $X\in \Gamma A$, $P \in \Gamma(\wedge^{\text{top}}A)$ and $\mu \in \Gamma(\wedge^{\text{top}}A^\ast)$
\begin{enumerate}
\item $\mathcal L_{fX}P = f\mathcal L_XP - \rho(X)(f)P.$
\item $\mathcal L_X(fP) = f\mathcal L_XP + \rho(X)(f)P.$
\item $\mathcal L_{fX}(\mu) = f\mathcal L_X\mu + \rho(X)(f)\mu.$
\item $\mathcal L_X(f\mu) = f\mathcal L_X\mu + \rho(X)(f)\mu.$
\end{enumerate}
\end{remark}  

Next, we adapt a result of \cite{illmp} in the context of oriented Lie algebroid which will  be used in the subsequent sections.
 
Let $A$ be a Lie algebroid of rank $m$. Assume that $A$ is oriented as a vector bundle. Let $\mu \in \Gamma (\wedge^mA^*)$ be a nowhere vanishing element giving the orientation.
Then for each $0 \leq k \leq m,$ $\mu$ defines a vector bundle isomorphism $*_\mu : \wedge^kA \rightarrow \wedge^{m-k}A^*$ given by   
$$*_\mu (P) = \iota_{P}\mu,~~P \in \Gamma (\wedge^kA)~~\text{and}~~*_\mu (f) = f \mu,~~f \in C^\infty (M).$$

Define an operator $\partial_{\mu}$ by
\begin{align}\label{partial}
\partial_\mu := {*_\mu}^{-1} \circ d_A \circ {*_\mu} :  \Gamma(\wedge^kA) \rightarrow  \Gamma(\wedge^{k-1}A).
\end{align}
The operator $\partial_\mu$  satisfies the condition $\mathcal{L}_X \mu = \partial_\mu (X) \mu, ~~X \in \Gamma A,$ and hence, may be  called the divergence with respect to $\mu$. Therefore,  for $X \in \Gamma{A},$ we may write $\partial_\mu (X) =~~\mbox{ div}_\mu X$. Clearly ${\partial_\mu}^2 = 0$. The homology of the complex $(\Gamma(\wedge^{\bullet}A), \partial_\mu )$ is denoted by $\mathcal{H}^{\mu}_{\bullet}(A).$
\begin{lemma}
 Let $A$ be an oriented Lie algebroid and $\mu \in \Gamma(\wedge^\text{top}A^*)$ be a nowhere vanishing element. Then $\mathcal{L}_X (*_\mu P )= *_\mu (\mathcal{L}_XP) + (\text{div}_\mu X) *_\mu{P},$ where $P \in \Gamma(\wedge^{\bullet}A)$ and $X \in \Gamma{A}.$
\end{lemma}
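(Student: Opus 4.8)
The plan is to verify the identity by reducing it to the corresponding statement for the Lie derivative along a section, which is already encoded in the defining relation $\mathcal L_X\mu = (\operatorname{div}_\mu X)\mu$, together with the compatibility of $\iota_P$ with $\mathcal L_X$ and the fact that $\partial_\mu$ (hence $d_A$ transported through $*_\mu$) commutes with $\mathcal L_X$ in the appropriate sense. Concretely, for $P\in\Gamma(\wedge^kA)$ we have $*_\mu P=\iota_P\mu$, so I first compute $\mathcal L_X(\iota_P\mu)$ by repeatedly applying the bracket relation $[\mathcal L_X,\iota_Y]=\iota_{[X,Y]}$ of Proposition~\ref{properties-Lie derivative-contraction}(8). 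Writing $P$ locally as a sum of decomposables $Y_1\wedge\cdots\wedge Y_k$ and using $\iota_{Y_1\wedge\cdots\wedge Y_k}=\iota_{Y_k}\circ\cdots\circ\iota_{Y_1}$, one gets $\mathcal L_X\circ\iota_P=\iota_P\circ\mathcal L_X+\iota_{\mathcal L_X P}$, where $\mathcal L_X P=[X,P]$ is the Schouten bracket extension (this is the Leibniz property of the Schouten bracket, i.e.\ Proposition~\ref{properties-Lie derivative-contraction}(1) combined with the derivation property (3)–(5) of the Schouten bracket). Applying both sides to $\mu$ yields
\[
\mathcal L_X(*_\mu P)=\iota_P(\mathcal L_X\mu)+\iota_{\mathcal L_X P}\mu=\iota_P\big((\operatorname{div}_\mu X)\mu\big)+*_\mu(\mathcal L_X P).
\]

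The remaining step is to observe that $\iota_P\big((\operatorname{div}_\mu X)\mu\big)=(\operatorname{div}_\mu X)\,\iota_P\mu=(\operatorname{div}_\mu X)\,*_\mu P$, which is immediate from the $C^\infty(M)$-linearity of contraction, $\iota_P(f\mu)=f\,\iota_P\mu$, valid since each $\iota_{Y_i}$ satisfies $\iota_{Y_i}(f\alpha)=f\,\iota_{Y_i}(\alpha)$. Combining the two displays gives exactly the claimed formula
\[
\mathcal L_X(*_\mu P)=*_\mu(\mathcal L_X P)+(\operatorname{div}_\mu X)\,*_\mu P.
\]
Since the identity is $C^\infty(M)$-linear and local in nature, it suffices to check it on decomposable $P$, so no loss of generality is incurred by the local reduction.

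The main obstacle is bookkeeping rather than conceptual: one must be careful with signs and with the extension of $\mathcal L_X$ from sections to multisections, making sure that the derivation identity $\mathcal L_X(\iota_P\mu)=\iota_{\mathcal L_X P}\mu+\iota_P(\mathcal L_X\mu)$ is applied with the correct degree conventions — in particular that $\mathcal L_X$ acting on $\Gamma(\wedge^\bullet A)$ (via the Schouten bracket) and on $\Gamma(\wedge^\bullet A^*)$ are intertwined by $*_\mu$ only up to the divergence term, which is precisely the content being proved. An alternative, slightly slicker route is to use $\partial_\mu={*_\mu}^{-1}\circ d_A\circ{*_\mu}$ together with $\mathcal L_X\circ d_A=d_A\circ\mathcal L_X$ (Proposition~\ref{properties-Lie derivative-contraction}(10)) and the already-noted relation $\mathcal L_X\circ\partial_\mu-\partial_\mu\circ\mathcal L_X=\mathcal L_{\operatorname{div}_\mu X}$ on $\Gamma(\wedge^\bullet A)$, but this essentially repackages the same computation, so I would present the direct contraction argument above.
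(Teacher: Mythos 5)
Your proposal is correct and follows essentially the same route as the paper: both reduce to decomposable multisections via the derivation property, apply the commutation relation $[\mathcal L_X,\iota_Y]=\iota_{[X,Y]}$ (iterated to get $\mathcal L_X\circ\iota_P=\iota_P\circ\mathcal L_X+\iota_{\mathcal L_X P}$), and then use the defining relation $\mathcal L_X\mu=(\operatorname{div}_\mu X)\mu$ together with $C^\infty(M)$-linearity of contraction. The only cosmetic difference is that the paper also spells out the degree-zero case $P=f\in C^\infty(M)$ explicitly before inducting, which your argument subsumes.
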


\begin{proof}
 Let $f \in C^\infty(M).$  Then, by Remark \ref{properties-top-degree}(4) we have
$$\mathcal{L}_X (*_\mu f ) = \mathcal{L}_X (f \mu) = f \mathcal{L}_X ( \mu) + (\rho (X)f) \mu = f (\text{div}_\mu X) \mu + *_\mu (\rho (X)f) = (\text{div}_\mu X) *_\mu (f) + *_\mu (\mathcal{L}_X f).$$
Let $P \in \Gamma{A}.$ Then,
\begin{align*}
 \mathcal{L}_X (*_\mu P ) =& \mathcal{L}_X \iota_P \mu = \mathcal{L}_X \iota_P \mu - \iota_P \mathcal{L}_X \mu + \iota_P \mathcal{L}_X \mu = \iota_{[X, P]} \mu + \iota_P (\text{div}_\mu X) \mu \\
=& \iota_{\mathcal{L}_X P} \mu + (\text{div}_\mu X) \iota_P \mu = *_\mu (\mathcal{L}_X P) + (\text{div}_\mu X) *_\mu (P).
\end{align*}
Thus the result is true for smooth functions on $M$ and sections of $A$. Since both sides satisfy derivation property with respect to $P$, therefore, they depend only
on the local behavior of $P$. By induction, it follows that the result is true for decomposable multisections. Since any arbitrary multisection $P \in \Gamma(\wedge^{\bullet}A)$ locally is a finite sum of decomposable multisections, the result follows.
\end{proof}

Next proposition will be useful throughout the paper.

\begin{prop}\label{formula}
 Let $A$ be an oriented Lie algebroid and $\mu \in \Gamma(\wedge^\text{top}A^*)$  a nowhere vanishing section inducing the orientation. Then for any $P \in \Gamma(\wedge^kA)$ and $\alpha \in \Gamma(\wedge^{k-1}A^*)$, $k \geq 1$, we have $\iota_{\alpha} \partial_\mu (P) = \text{div}_\mu (\iota_\alpha P) + (-1)^k \iota_{d_A \alpha} P.$
\end{prop}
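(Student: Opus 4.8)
The plan is to apply the bundle isomorphism $*_\mu$ to the identity to be proved, all three of whose terms are scalar-valued, turning it into an equality of top-degree forms, and then to recognise that equality as a formal consequence of $\mathcal L_X=\iota_Xd_A+d_A\iota_X$ together with the graded Leibniz rule for $d_A$. The only extra ingredient is the multivector analogue of the interior-product--wedge-product adjunction carried by a volume form, namely the identity
\begin{equation}\label{aux-hodge}
\iota_{\iota_\omega R}\,\mu=(-1)^{j(p-j)}\,\omega\wedge\iota_R\,\mu,\qquad \omega\in\Gamma(\wedge^jA^\ast),\quad R\in\Gamma(\wedge^pA),\quad j\le p,
\end{equation}
with the convention that wedging by a $0$-form is multiplication; equivalently $*_\mu(\iota_\omega R)=(-1)^{j(p-j)}\,\omega\wedge *_\mu(R)$.

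Identity \eqref{aux-hodge} I would prove by induction on $j$, the case $j=0$ being trivial. For $j=1$, both sides are $C^\infty(M)$-linear and local in $R$, so one may assume $R=X_1\wedge\cdots\wedge X_p$ is decomposable; contracting the identity $\omega\wedge\mu=0$ by $\iota_R=\iota_{X_p}\circ\cdots\circ\iota_{X_1}$ and repeatedly using that each $\iota_{X_i}$ is an odd derivation of $\Gamma(\wedge^\bullet A^\ast)$ with $\iota_{X_i}\omega=\langle\omega,X_i\rangle$, one collects exactly $\omega\wedge\iota_R\mu=(-1)^{p-1}\iota_{\iota_\omega R}\mu$. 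For the inductive step one writes $\omega=\omega_1\wedge\omega'$ with $\omega_1\in\Gamma A^\ast$ and $\omega'\in\Gamma(\wedge^{j-1}A^\ast)$, uses $\iota_{\omega_1\wedge\omega'}=\iota_{\omega'}\circ\iota_{\omega_1}$, the $j=1$ case, the inductive hypothesis applied to $\iota_{\omega_1}R\in\Gamma(\wedge^{p-1}A)$, and $\omega'\wedge\omega_1=(-1)^{j-1}\omega$; a short count shows the accumulated sign is precisely $(-1)^{j(p-j)}$.

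Granting \eqref{aux-hodge}, it suffices, $*_\mu$ being a bundle isomorphism, to prove the asserted identity after multiplying both sides by $\mu$. Let $P\in\Gamma(\wedge^kA)$ and $\alpha\in\Gamma(\wedge^{k-1}A^\ast)$. Since $\iota_\alpha\partial_\mu P$ is a function, $*_\mu(\iota_\alpha\partial_\mu P)=\iota_{\iota_\alpha\partial_\mu P}\mu$, which by \eqref{aux-hodge} with $(j,p)=(k-1,k-1)$ equals $\alpha\wedge\iota_{\partial_\mu P}\mu=\alpha\wedge *_\mu(\partial_\mu P)=\alpha\wedge d_A(\iota_P\mu)$, using $*_\mu\circ\partial_\mu=d_A\circ *_\mu$. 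Next, $\iota_\alpha P\in\Gamma A$, and by Proposition \ref{properties-Lie derivative-contraction}(9) together with $d_A\mu=0$ one gets $(\text{div}_\mu(\iota_\alpha P))\,\mu=\mathcal L_{\iota_\alpha P}\mu=d_A(\iota_{\iota_\alpha P}\mu)$; now \eqref{aux-hodge} with $(j,p)=(k-1,k)$ gives $\iota_{\iota_\alpha P}\mu=(-1)^{k-1}\alpha\wedge\iota_P\mu$, so this term equals $(-1)^{k-1}d_A(\alpha\wedge\iota_P\mu)=(-1)^{k-1}d_A\alpha\wedge\iota_P\mu+\alpha\wedge d_A(\iota_P\mu)$ by the Leibniz rule for $d_A$. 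Finally, \eqref{aux-hodge} with $(j,p)=(k,k)$ gives $(-1)^k(\iota_{d_A\alpha}P)\,\mu=(-1)^k\,d_A\alpha\wedge\iota_P\mu$. Adding the two right-hand contributions, the terms $(-1)^{k-1}d_A\alpha\wedge\iota_P\mu$ and $(-1)^k\,d_A\alpha\wedge\iota_P\mu$ cancel, leaving $\alpha\wedge d_A(\iota_P\mu)$, which is precisely $*_\mu$ of the left-hand side; injectivity of $*_\mu$ then gives the claim.

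The main obstacle is bookkeeping: fixing and verifying the sign $(-1)^{j(p-j)}$ in \eqref{aux-hodge}, and inserting it with the correct degrees in the three applications above; once this is done the cancellation is forced. A more elementary alternative, bypassing \eqref{aux-hodge}, is a direct induction on $k$ in the statement itself: the case $k=1$ is Remark \ref{properties-top-degree}(3) rewritten, and the inductive step reduces, by locality, to $P=X\wedge Q$ with $X\in\Gamma A$, $Q\in\Gamma(\wedge^{k-1}A)$, and to $\alpha=\beta\wedge\gamma$ with $\beta\in\Gamma A^\ast$, $\gamma\in\Gamma(\wedge^{k-2}A^\ast)$. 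This is workable but the case analysis is heavier than the $*_\mu$ computation, so I would present the latter.
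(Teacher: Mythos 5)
Your proof is correct, and it takes a genuinely different route from the paper's. The paper proceeds by induction on $k$: it first derives an explicit Koszul-type formula for $\partial_\mu$ on a decomposable multisection $X_1\wedge\cdots\wedge X_k$ (the displayed equations {\bf(Eqn~A)} and {\bf(Eqn~B)} in the text), expressing $\partial_\mu(X_1\wedge\cdots\wedge X_k)$ in terms of divergences and brackets $[X_i,X_j]$, and then checks the asserted identity term by term against the explicit formula for $d_A\alpha$. You instead apply $*_\mu$ to all three (scalar-valued) terms and reduce the whole statement to the Cartan formula $\mathcal L_X=\iota_Xd_A+d_A\iota_X$, the vanishing $d_A\mu=0$ in top degree, and the graded Leibniz rule for $d_A$, after which the two $d_A\alpha\wedge\iota_P\mu$ contributions cancel; the only nontrivial input is your identity \eqref{aux-hodge}, which is a purely pointwise (fiberwise linear-algebra) fact independent of the Lie algebroid structure, and whose sign $(-1)^{j(p-j)}$ I have checked is consistent with the paper's conventions $\iota_{X_1\wedge\cdots\wedge X_k}=\iota_{X_k}\circ\cdots\circ\iota_{X_1}$ and $\iota_\alpha P(\beta)=P(\alpha\wedge\beta)$ (your $j=1$ step does implicitly use that $\iota_X$ is an odd derivation of $\Gamma(\wedge^\bullet A^*)$, which the paper does not list among its stated properties of $\iota_X$ but which is standard). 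Your argument is shorter and more conceptual, and isolates the combinatorics in a reusable lemma; what it gives up relative to the paper's computation is the explicit formula {\bf(Eqn~B)} for $\partial_\mu$ on decomposables, which is of independent interest as the Batalin--Vilkovisky-type generator formula and is obtained in the paper as a byproduct.
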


\begin{proof}
We use induction to prove the result. 

Let $X \in \Gamma A$ and $f \in C^\infty(M).$ By Remark \ref{properties-top-degree}(3), we have
$$\mathcal{L}_{f X} \mu = f \mathcal{L}_{ X} \mu + \rho (X)(f) \mu.$$
In other words, $\partial_\mu (f X) \mu = f \partial_\mu ( X) \mu + \rho (X)(f) \mu.$ Therefore,
$$f \partial_\mu ( X) = \partial_\mu (f X) - \rho (X)(f),~~\text{or}, \iota_f \partial_\mu ( X) = \text{div}_\mu (f X) - \iota_{d_A f} X,$$
proving the result  for $k=1$. Assume that the result is true for $1\leq j\leq (k-1).$  We claim that the result holds for any decomposable $k$-multisection.

Let $P = X_1 \wedge \cdots \wedge X_k$ be a decomposable $k$-multisection.
Let $\beta \in \Gamma(\wedge^{k-2}A^*).$ By induction hypothesis, we have
\begin{align*}
 &  \iota_\beta \partial_\mu (X_1 \wedge \cdots \wedge X_{k-1})\\
&=  \text{div}_\mu \big(\iota_\beta (X_1 \wedge \cdots \wedge X_{k-1})\big) + (-1)^{k-1} \iota_{d_A \beta} (X_1 \wedge \cdots \wedge X_{k-1})\\
&=  \sum_{i=1}^{k-1} (-1)^{k-1 +i} \text{div}_\mu \big(  \beta(X_1 \wedge \cdots \wedge \hat{X_i} \wedge \cdots \wedge X_{k-1}) X_i  \big)\\
& \hspace*{4cm}+ (-1)^{k-1} {d_A \beta} (X_1 \wedge \cdots \wedge X_{k-1})\\
&= \sum_{i=1}^{k-1} (-1)^{k-1 +i} \text{div}_\mu \big(  \beta(X_1 \wedge \cdots \wedge \hat{X_i} \wedge \cdots \wedge X_{k-1}) X_i  \big)\\
&+  (-1)^{k-1} \sum_{i=1}^{k-1} (-1)^{i-1} \rho(X_i) \beta (X_1, \ldots, \hat{X_i}, \ldots, X_{k-1})\\
&+  (-1)^{k-1} \sum_{1 \leqslant i < j \leqslant k-1} (-1)^{i+j} \beta ([X_i, X_j], X_1, \ldots, \hat{X_i}, \ldots, \hat{X_j}, \ldots, X_{k-1})\\
&= \sum_{i=1}^{k-1} (-1)^{k-1 +i}  \beta(X_1 \wedge \cdots \wedge \hat{X_i} \wedge \cdots \wedge X_{k-1}) \text{div}_\mu (X_i) \\
&+  (-1)^{k-1} \sum_{1 \leqslant i < j \leqslant k-1} (-1)^{i+j} \beta ([X_i, X_j], X_1, \ldots, \hat{X_i}, \ldots, \hat{X_j}, \ldots, X_{k-1}),
\end{align*}
since  $\text{div}_\mu (fX) = f \text{div}_\mu (X) + \rho(X) f.$
Hence,
\begin{align*}
& \partial_\mu (X_1 \wedge \cdots \wedge X_{k-1}) \\=& (-1)^{k-1} \sum_{i=1}^{k-1} (-1)^i\big(\text{div}_\mu (X_i)\big) X_1 \wedge \cdots \wedge \hat{X_i} \wedge \cdots \wedge X_{k-1}\\
&+ (-1)^{k-1} \sum_{1 \leqslant i < j \leqslant k-1} (-1)^{i+j} [X_i, X_j]\wedge X_1 \wedge \cdots  \wedge\hat{X_i}\wedge  \cdots  \wedge\hat{X_j}\wedge  \cdots \wedge X_{k-1}.\hspace{2mm}{\bf(Eqn~ A)} 
\end{align*}
Therefore,
\begin{align*}
& d_A ( *_\mu(P)) = d_A (\iota_{X_1 \wedge \cdots \wedge X_k} \mu) = d_A (\iota_{X_k} \iota_{X_{k-1}} \ldots \iota_{X_1} \mu) = d_A (\iota_{X_k} *_\mu (X_1 \wedge \cdots \wedge X_{k-1}))\\
=& \mathcal{L}_{X_k} *_\mu (X_1 \wedge \cdots \wedge X_{k-1}) - \iota_{X_k} d_A *_\mu (X_1 \wedge \cdots \wedge X_{k-1}) \\
=& \mathcal{L}_{X_k} *_\mu (X_1 \wedge \cdots \wedge X_{k-1}) - \iota_{X_k} *_\mu (\partial_\mu (X_1 \wedge \cdots \wedge X_{k-1}))\\ 
=&  *_\mu \mathcal{L}_{X_k} (X_1 \wedge \cdots \wedge X_{k-1}) + (\text{div}_\mu X_k) *_\mu (X_1 \wedge \cdots \wedge X_{k-1})\\
& - \iota_{X_k} *_\mu (\partial_\mu (X_1 \wedge \cdots \wedge X_{k-1}))\\
=& *_\mu \bigg( \sum_{i=1}^{k-1} X_1 \wedge \cdots \wedge [X_k, X_i] \wedge \cdots \wedge X_{k-1} \bigg) + (\text{div}_\mu X_k) *_\mu (X_1 \wedge \cdots \wedge X_{k-1})\\
& - \iota_{X_k} *_\mu (\partial_\mu (X_1 \wedge \cdots \wedge X_{k-1}))\\
=& *_\mu (\sum_{i=1}^{k-1} (-1)^i [X_i, X_k] \wedge X_1 \wedge \cdots \wedge \hat{X_i} \wedge \cdots \wedge X_{k-1}) + (\text{div}_\mu X_k) *_\mu (X_1 \wedge \cdots \wedge X_{k-1})\\ 
&+ (-1)^k \iota_{X_k} *_\mu \bigg( \sum_{i=1}^{k-1}(-1)^i (\text{div}_\mu X_i) X_1 \wedge \cdots \wedge \hat{X_i} \wedge \cdots \wedge X_{k-1}\\
&+  \sum_{1 \leq i < j \leq k-1} (-1)^{i+j} [X_i, X_j]\wedge X_1 \wedge \cdots  \wedge\hat{X_i}\wedge  \cdots  \wedge\hat{X_j}\wedge  \cdots \wedge X_{k-1}\bigg)~~ (\text{by} ~~ {\bf(Eqn~ A)})\\
=& *_\mu (\sum_{i=1}^{k-1} (-1)^i [X_i, X_k] \wedge X_1 \wedge \cdots \wedge \hat{X_i} \wedge \cdots \wedge X_{k-1}) + (\text{div}_\mu X_k) *_\mu (X_1 \wedge \cdots \wedge X_{k-1}) \\
&+ (-1)^k \sum_{i=1}^{k-1} (-1)^i(\text{div}_\mu X_i) *_\mu (X_1 \wedge \cdots \wedge \hat{X_i} \wedge \cdots \wedge X_{k-1} \wedge X_k)\\
&+ (-1)^k  \sum_{1 \leq i < j \leq k-1} (-1)^{i+j} *_\mu ([X_i, X_j]\wedge X_1 \wedge \cdots  \wedge\hat{X_i}\wedge  \cdots  \wedge\hat{X_j}\wedge  \cdots \wedge X_{k-1} \wedge X_k).
\end{align*}
Thus,
\begin{align*}
 (-1)^k d_A ( *_\mu(P)) =& *_\mu \bigg( \sum_{i=1}^{k}(-1)^i (\text{div}_\mu X_i) (X_1 \wedge \cdots \wedge \hat{X_i} \wedge \cdots  \wedge X_k)\\
&+ \sum_{1 \leq i < j \leq k} (-1)^{i+j} [X_i, X_j]\wedge X_1 \wedge \cdots  \wedge\hat{X_i}\wedge  \cdots  \wedge\hat{X_j}\wedge  \cdots \wedge X_k \bigg),
\end{align*}
or,
\begin{align*}
 (-1)^k \partial_\mu (P) =& \sum_{i=1}^{k}(-1)^i (\text{div}_\mu X_i) (X_1 \wedge \cdots \wedge \hat{X_i} \wedge \cdots  \wedge X_k)\\
&+ \sum_{1 \leq i < j \leq k} (-1)^{i+j} [X_i, X_j]\wedge X_1 \wedge \cdots  \wedge\hat{X_i}\wedge  \cdots  \wedge\hat{X_j}\wedge  \cdots \wedge X_k.\hspace{2mm}{\bf(Eqn~ B)} 
\end{align*}
Now for any $\alpha \in \Gamma(\wedge^{k-1}A^*)$, we have
\begin{align*}
&  (-1)^k \text{div}_\mu (\iota_\alpha P) + \iota_{d_A \alpha} P\\
= & (-1)^k \text{div}_\mu (\iota_\alpha (X_1 \wedge \cdots \wedge X_k)) + d_A \alpha (X_1, \ldots, X_k)\\
= & (-1)^k \sum_{i=1}^k (-1)^{i+k}  \text{div}_\mu (\alpha(X_1, \ldots, \hat{X_i}, \ldots, X_k) X_i)
\end{align*}
\begin{align*}
&+  \sum_{i=1}^k (-1)^{i+1} \rho (X_i) \alpha (X_1, \ldots, \hat{X_i}, \ldots, X_k)\\
&+  \sum_{1 \leq i < j \leq k} (-1)^{i+j} \alpha ([X_i, X_j]\wedge X_1 \wedge \cdots  \wedge\hat{X_i}\wedge  \cdots  \wedge\hat{X_j}\wedge  \cdots \wedge X_k)\\
=&  \sum_{i=1}^k (-1)^i \alpha (X_1, \ldots, \hat{X_i}, \ldots, X_k) (\text{div}_\mu X_i)\\
&+  \sum_{1 \leq i < j \leq k} (-1)^{i+j} \alpha ([X_i, X_j], X_1 ,\ldots , \hat{X_i} , \ldots , \hat{X_j} , \ldots , X_k)\\
=&  (-1)^k \iota_\alpha \partial_\mu (P), ~~ (\text{by} ~~ {\bf(Eqn~ B)})
\end{align*}
as $\text{div}_\mu (f X) = f \text{div}_\mu ( X) + \rho(X) f.$
Thus, the result holds for decomposable $k$-multisections. As the expressions on both sides of the equality depend only on the local behavior of $P$, it follows that the result
holds for any multisection.
\end{proof}

Finally, in this section,  we recall the notion of a Leibniz algebroid (\cite{ibanez}), which is a non skew-symmetric analogue of the notion of a Lie algebroid, and the definition of Leibniz algebroid cohomology with coefficients in the trivial representation. To simplify notation, we shall denote a Lie algebra bracket or a Leibniz algebra bracket by the same symbol $[~, ~],$ without causing any confusion.  

\begin{defn}\label{Def-leib-algbd}
 A {\it (left) Leibniz algebroid} over $M$ is a vector bundle $A$ over $M$ together with a bracket $[~,~]$ on the space $\Gamma{A}$ of smooth sections of $A$ and a bundle map $\rho : A \rightarrow TM$, called the {\it anchor} 
such that the bracket satisfies
\begin{enumerate}
\item (left) Leibniz identity: $ [X, [Y, Z]] = [[X, Y], Z] + [Y, [X, Z]],~~X, Y, Z \in \Gamma{A};$
\item $[X,fY] = f[X, Y] + (\rho(X)f)Y$;
\item $\rho ([X, Y]) = [\rho (X), \rho (Y)], ~~X, Y \in \Gamma A,~~f\in C^\infty(M).$
\end{enumerate}
\end{defn}
Definition of  morphism between Leibniz algebroids over the same base is similar to that of Lie algebroids.

Observe that the space $\Gamma{A}$ together with the bracket $[~,~]$ forms a (left) Leibniz algebra (\cite{loday93, loday}). Moreover, from  condition (3) of the above definition, the Leibniz algebra
$(\Gamma{A}, [~,~])$ has a representation on $C^\infty(M)$ given by
$$\Gamma A \times C^\infty(M) \rightarrow C^\infty(M),~~(X, f) \mapsto \rho(X)(f), ~~X \in \Gamma A,~ f \in C^\infty(M),$$
called the {\it trivial representation}. 

The Leibniz algebroid cohomology of a Leibniz algebroid $\mathcal A = (A, [~,~], \rho)$ is the cohomology of the Leibniz algebra $(\Gamma A,[~,~])$ with coefficients in the trivial representation.
\begin{defn}\label{Def-leib-algbd-cohomology}
Let 
$$ C^k(\Gamma A; C^\infty(M)) = \{ c^k : \Gamma A \times \stackrel{k}{\cdots}\times \Gamma A \rightarrow C^\infty(M)|~~ c^k ~~\mbox{is $\mathbb R$-multilinear}\}.$$ Let 
$$d_{\mathcal A}: C^k(\Gamma A; C^\infty(M))\rightarrow  C^{k+1}(\Gamma A; C^\infty(M))$$ be the $\mathbb R$-linear map defined by
\begin{align*}
&d_{\mathcal A}c^k(X_0, X_1, \ldots , X_k) = \sum_{i= 0}^k (-1)^i\rho(X_i)(c^k(X_0, \ldots, \widehat{X}_i, \ldots, X_k))\\
&       + \sum_{0\leq i<j \leq k}(-1)^{i-1}c^k(X_0, \ldots, \widehat{X}_i, \ldots, X_{j-1}, [X_i, X_j], X_{j+1}, \ldots X_k),
\end{align*}
where $X_0, \ldots, X_k \in \Gamma A.$ Then $\{C^k(\Gamma A; C^\infty (M)), d_{\mathcal A}\}_{k\geq 0}$ is a cochain complex and the corresponding homology at the $k^{th}$-level is called the $k^{th}$ cohomology of $\mathcal A$ with coefficients in the trivial representation, and is denoted by $\mathcal H^k_{Leib}(\mathcal A).$
\end{defn}
It follows, in particular, that a $1$-cochain $c \in C^1(\Gamma A; C^\infty(M))$ is a $1$-cocycle provided
$$ \rho(X) (c(Y)) - \rho(Y)(c(X)) = c([X, Y]), ~~X, Y \in \Gamma A.$$

\section{Lie algebroid with Nambu structure}\label{3}
In this section, we recall the notion of a Lie algebroid with Nambu structure \cite{hagi} and discuss properties and examples of such structures.

\begin{defn}\label{Def}
Let $(A, [~,~],\rho)$ be a Lie algebroid. Then an $n$-multisection ($3\leq n \leq~\mbox{rank}~A$) $\Pi \in \Gamma (\wedge^nA)$ is called a {\it Nambu structure of order $n$} on $A$ if 
\begin{align}\label{lie-nambu-eqn}
 \mathcal{L}_{\Pi^{\sharp} \alpha} \Pi = (-1)^n (\iota_{d_A \alpha} \Pi) \Pi 
\end{align}
holds for all $\alpha \in \Gamma (\wedge^{n-1}A^*),$  where $\Pi^{\sharp}: \Gamma (\wedge^{n-1}A^*) \rightarrow \Gamma A$ is the induced bundle map $\alpha \mapsto \Pi^\sharp(\alpha) :=\iota_{\alpha}\Pi.$ A Lie algebroid $A$ with a Nambu structure $\Pi$ is denoted by the pair $(A, \Pi).$ A Nambu structure $\Pi$ on $A$ is called {\it maximal} if the order of $\Pi$ is equal to the rank of $A$.
\end{defn}

\begin{exam}\label{lie-nam-exam}
\begin{enumerate}
\item A Nambu-Poisson manifold $(M, \{ ~, \ldots, ~\})$ of order $n$ ($n \geq 3$) with associated Nambu-Poisson tensor $\Lambda \in  \Gamma(\wedge^n TM)$ induces a Nambu structure on the Lie algebroid $TM$. To see this, note that if $\Lambda(x) = 0$, for some $x \in M$, then both sides of Equation
(\ref{lie-nambu-eqn}) become zero at $x$. If $\Lambda (x) \neq 0$, then by Theorem \ref{local-structure-NP-manifold}, there exists a local chart $(U; x_1, \ldots, x_n, x_{n+1}, \ldots, x_m)$ of $M$ around $x$, $m$ being the dimension of $M$, such that on $U,$ $\Lambda$ is of the form $$\Lambda|U = \frac{\partial}{\partial x_1} \wedge \cdots \wedge \frac{\partial}{\partial x_n}.$$ Thus to verify the defining condition (\ref{lie-nambu-eqn}) of Definition \ref{Def}, it is sufficient to check it for local $(n-1)$-forms of the type 
$$\alpha = \sum_{i=1}^nf_idx_1 \wedge \cdots \wedge \widehat{dx_i} \wedge \cdots \wedge dx_n,~~f_i \in C^\infty(U).$$ In this case, we have 
$\Lambda^\sharp\alpha = \iota_{\alpha}\Lambda = \sum_{i=1}^n (-1)^{n-i}f_i\frac{\partial}{\partial x_i}.$ Therefore,
\begin{align*}
\mathcal L_{\Lambda^\sharp \alpha}\Lambda & = \sum_{i= 1}^n (-1)^{n-i}\mathcal L_{f_i\frac{\partial}{\partial x_i}}\frac{\partial}{\partial x_1} \wedge \cdots \wedge \frac{\partial}{\partial x_n}\\
& = \sum_{i= 1}^n (-1)^{n-i}(-1)^n(\iota_{df_i}\frac{\partial}{\partial x_1} \wedge \cdots \wedge \frac{\partial}{\partial x_n}) \wedge \frac{\partial}{\partial x_i}\\
& = \sum_{i= 1}^n (-1)^{n-i}(-1)^n (-1)^{i-1}(\iota_{df_i}\frac{\partial}{\partial x_i}\wedge \frac{\partial}{\partial x_1} \wedge \cdots \wedge \widehat{\frac{\partial}{\partial x_i}}\wedge \cdots \wedge \frac{\partial}{\partial x_n})\wedge \frac{\partial}{\partial x_i}\\
& = \sum_{i= 1}^n (-\frac{\partial f_i}{\partial x_i})\frac{\partial}{\partial x_1} \wedge \cdots \wedge \widehat{\frac{\partial}{\partial x_i}}\wedge \cdots \wedge \frac{\partial}{\partial x_n}\wedge \frac{\partial}{\partial x_i}\\
& = \sum_{i= 1}^n-(-1)^{n-i}\frac{\partial f_i}{\partial x_i}\frac{\partial}{\partial x_1} \wedge \cdots \wedge \frac{\partial}{\partial x_n}.
\end{align*}

On the other hand,
\begin{align*}
d\alpha & = \sum_{i=1}^ndf_i \wedge dx_1 \wedge \cdots \wedge \widehat{dx_i}\wedge \cdots \wedge dx_n\\
& = \sum_{i=1}^n(-1)^{i-1}dx_1 \wedge \cdots \wedge dx_{i-1}\wedge df_i \wedge dx_{i+1} \wedge  \cdots \wedge dx_n.
\end{align*}
Hence, $\iota_{d\alpha}\Lambda = \sum_{i=1}^n(-1)^{i-1}\frac{\partial f_i}{\partial x_i}.$ Therefore, we obtain $\mathcal L_{\Lambda^\sharp \alpha}\Lambda = (-1)^n (\iota_{d\alpha}\Lambda)\Lambda,$ proving that $\Lambda$ is a Nambu structure on the tangent Lie algebroid $TM$.

Conversely, any $n$-vector field $\Lambda \in  \Gamma(\wedge^n TM)$ which satisfies Equation (\ref{lie-nambu-eqn}) is certainly a Nambu-Poisson tensor. This follows by taking $\alpha = df_1 \wedge \cdots \wedge df_{n-1}$ in Equation (\ref{lie-nambu-eqn}) and comparing it with Equation (\ref{lie-der-nambu}). Thus Lie algebroid with Nambu
structure is a natural generalization of a Nambu-Poisson manifold of order $n$ ($n\geq 3$).

\item Let $(\mathfrak g, [~, ~])$ be a Lie algebra of dimension $m.$ Consider $\mathfrak g$ as a Lie algebroid over a point. Let $X_1, X_2, \ldots, X_n \in \mathfrak g$ be such that $[X_i,X_j] = 0,$ for all $i, j = 1, \ldots, n,$ $n \leq m.$ Then $\Pi = X_1 \wedge \cdots \wedge X_n \in \wedge^n \mathfrak g$ is a Nambu structure of order $n$ on the Lie algebra $\mathfrak g.$ For, if $X_1, X_2, \ldots, X_n $ are linearly dependent, then $\Pi$ is zero and therefore condition (\ref{lie-nambu-eqn}) of Definition \ref{Def} is trivially satisfied. In case $\{X_1, X_2, \ldots, X_n\}$ is linearly independent, we extend it to a basis $\{X_1, X_2, \ldots, X_n, X_{n+1}, \ldots X_m\}$ of $\mathfrak g.$ Let $\{X^*_1, X^*_2, \ldots, X^*_m\}$ be the dual basis of $\mathfrak g^*.$ To check that $\Pi$ is a Nambu structure it is enough to verify condition (\ref{lie-nambu-eqn}) for $\alpha \in \wedge^{n-1}\mathfrak g ^*$ of the forms 
\begin{itemize}
\item  [(i)] $\alpha = X^*_1 \wedge \cdots \wedge \widehat{X^*_k} \wedge \cdots \wedge X^*_n, ~~1\leq k \leq n$ and 
\item  [(ii)] $\alpha = X^*_{i_1} \wedge \cdots \wedge X^*_{i_{n-1}}, ~~i_k\notin \{1, \ldots, n\},$ for some $1\leq k \leq n-1.$
\end{itemize}
Note that both sides of the equality (\ref{lie-nambu-eqn}) are zero in these cases. This is because for $\alpha$ of the form (i),
\begin{align*} 
\mathcal{L}_{\Pi^{\sharp} \alpha} \Pi  &= [\Pi^{\sharp} \alpha, \Pi] = (-1)^{n-k}[X_k, X_1 \wedge \cdots \wedge X_n] \\
& = (-1)^{n-k}\sum_{i=1}^nX_1 \wedge \cdots \wedge[X_k, X_i]\wedge \cdots \wedge X_n =0,
\end{align*}
whereas, for $\alpha$ of the form (ii), $\mathcal{L}_{\Pi^{\sharp} \alpha} \Pi =0$ as $\Pi^{\sharp} \alpha=0.$ On the other hand, suppose for any $\alpha \in \wedge^{n-1}\mathfrak g^*$
$$\delta \alpha = cX^*_1\wedge \cdots \wedge X^*_n + \sum c_{i_1\ldots i_n}X^*_{i_1} \wedge \cdots \wedge X^*_{i_n},$$ where on the right hand side of the above equality the sum is over indices $i_1, \ldots i_n$ such that $i_1<\cdots <i_n,~~(i_1, \ldots, i_n) \neq (1, \ldots , n)$ and $\delta$ is the differential operator of the Chevalley-Eilenberg complex of $\mathfrak g.$ 
Then $c= (\delta\alpha)(X_1, \ldots, X_n) =0.$ For,
$$(\delta\alpha)(X_1, \ldots, X_n) = \sum_{1\leq i<j \leq n}(-1)^{i+j} \alpha([X_i, Xj], X_1, \ldots, \widehat{X_i}, \ldots, \widehat{X_j}, \ldots, X_n) =0,$$ as $[X_i, X_j]=0,~~i, j \in \{1, \ldots, n\}.$ Thus $\iota_{\delta\alpha}\Pi= 0$ for any $\alpha \in \wedge^{n-1}\mathfrak g^*.$ Hence, $$(-1)^n( \iota_{\delta\alpha}\Pi)\Pi = 0.$$

Suppose $\mathfrak g$ is the Lie algebra of a Lie group $G.$ Let $\overleftarrow{X}_1, \ldots, \overleftarrow{X}_n$ be the left invariant vector fields corresponding to $X_1, X_2, \ldots, X_n \in \mathfrak g.$ Then, $[\overleftarrow{X}_i, \overleftarrow{X}_j] = 0,$ for all $i, j = 1, \ldots, n$ and therefore, $\overleftarrow{\Pi} = \overleftarrow{X}_1 \wedge \cdots \wedge \overleftarrow{X}_n$ is the left invariant Nambu-Poisson structure on $G$ such that $\overleftarrow{\Pi}(e) = \Pi.$  
\end{enumerate}
\end{exam}

The following result shows that for an orientable Lie algebroid each choice of an orientation form gives rise to a Nambu structure. In particular, this result gives rise to examples of Lie algebroids equipped with Nambu structure which do not come from Nambu-Poisson manifolds (cf. Example \ref{Nambu-structure-not-NP}).

\begin{prop}\label{volume-nambu}
Let $A$ be an oriented Lie algebroid of rank $m$ ($m \geq 3$) and $\mu \in \Gamma(\wedge^m A^*)$ be a non-vanishing section representing the orientation of $A$.
Define an $m$-multisection $\Pi_{\mu} \in \Gamma(\wedge^m A)$ by $\Pi_{\mu} (\alpha_1, \ldots, \alpha_m) \mu = \alpha_1 \wedge \cdots \wedge \alpha_m,$
for all $\alpha_1, \ldots, \alpha_m \in \Gamma A^*.$ Then $\Pi_\mu$ defines a maximal Nambu structure on the Lie algebroid $A.$ 
\end{prop}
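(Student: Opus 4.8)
The plan is to verify the defining condition \eqref{lie-nambu-eqn} directly from the definition of $\Pi_\mu$, working locally. Fix $x \in M$. Since $\mu$ is nowhere vanishing, on a neighbourhood $U$ of $x$ we can choose a local frame $e_1, \ldots, e_m$ of $A$ together with the dual coframe $e^1, \ldots, e^m$ of $A^*$ so that $\mu|_U = e^1 \wedge \cdots \wedge e^m$. Unwinding the definition $\Pi_\mu(\alpha_1, \ldots, \alpha_m)\mu = \alpha_1 \wedge \cdots \wedge \alpha_m$ then gives $\Pi_\mu|_U = e_1 \wedge \cdots \wedge e_m$, the top multisection dual to $\mu$. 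Because $\Pi_\mu$ is of maximal rank $m = \mbox{rank}\,A$, both sides of \eqref{lie-nambu-eqn} take values in $\Gamma(\wedge^m A)$, which is a line bundle locally spanned by $\Pi_\mu|_U$. Thus it suffices to check the scalar identity obtained by contracting both sides with $\mu$, i.e. to show $\iota_{\mu}\big(\mathcal{L}_{\Pi_\mu^\sharp \alpha}\Pi_\mu\big) = (-1)^m (\iota_{d_A\alpha}\Pi_\mu)$ for all $\alpha \in \Gamma(\wedge^{m-1}A^*)$; and by $C^\infty(M)$-linearity of both sides in $\alpha$ (the left side is $C^\infty$-linear in $\alpha$ because $\Pi_\mu^\sharp$ is a bundle map and $\Pi_\mu$ is of top degree, using Remark \ref{properties-top-degree}), it is enough to check it for $\alpha$ of the form $e^1 \wedge \cdots \wedge \widehat{e^k} \wedge \cdots \wedge e^m$.

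For such an $\alpha$ one computes $\Pi_\mu^\sharp\alpha = \iota_\alpha \Pi_\mu = \pm\, e_k$ (the sign being $(-1)^{\text{something in }k}$, irrelevant for the argument since it appears on both sides). The key computational input is the formula for $\partial_\mu$ on decomposable multisections, namely \textbf{(Eqn B)} in the proof of Proposition \ref{formula}, together with the observation that $*_\mu(\Pi_\mu) = \iota_{\Pi_\mu}\mu = 1$, so $\partial_\mu \Pi_\mu = {*_\mu}^{-1} d_A(1) = 0$. Hence, writing $X := \Pi_\mu^\sharp\alpha$, the Lemma preceding Proposition \ref{formula} gives
\begin{align*}
\mathcal{L}_X(*_\mu \Pi_\mu) = *_\mu(\mathcal{L}_X \Pi_\mu) + (\mbox{div}_\mu X)\, *_\mu \Pi_\mu,
\end{align*}
and since $*_\mu\Pi_\mu = 1$ is annihilated by $\mathcal{L}_X$ (as $\mathcal{L}_X f = \rho(X)f$ on functions and one checks $\rho(X)(1)=0$... more precisely $\mathcal{L}_X(*_\mu\Pi_\mu)=0$ because $*_\mu\Pi_\mu$ is the constant function $1$), we obtain $*_\mu(\mathcal{L}_X\Pi_\mu) = -(\mbox{div}_\mu X)$. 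Therefore $\mathcal{L}_X\Pi_\mu = -(\mbox{div}_\mu X)\,\Pi_\mu = -(\partial_\mu X)\,\Pi_\mu$. On the other side, by Proposition \ref{formula} applied with $P = \Pi_\mu$ and the given $\alpha \in \Gamma(\wedge^{m-1}A^*)$, together with $\partial_\mu\Pi_\mu = 0$, we get $0 = \iota_\alpha\partial_\mu\Pi_\mu = \mbox{div}_\mu(\iota_\alpha\Pi_\mu) + (-1)^m\iota_{d_A\alpha}\Pi_\mu = \mbox{div}_\mu(\Pi_\mu^\sharp\alpha) + (-1)^m(\iota_{d_A\alpha}\Pi_\mu)$, so $(\iota_{d_A\alpha}\Pi_\mu) = -(-1)^m\,\mbox{div}_\mu(\Pi_\mu^\sharp\alpha) = (-1)^{m+1}\partial_\mu X$. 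Comparing, $(-1)^m(\iota_{d_A\alpha}\Pi_\mu)\Pi_\mu = -(\partial_\mu X)\Pi_\mu = \mathcal{L}_X\Pi_\mu$, which is exactly \eqref{lie-nambu-eqn}.

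Actually this last paragraph shows the identity holds for \emph{all} $\alpha \in \Gamma(\wedge^{m-1}A^*)$ at once, with no need to pass to a special frame or to localize — the two crucial facts are simply that $\Pi_\mu$ is of top degree with $*_\mu\Pi_\mu = 1$ (hence $\partial_\mu\Pi_\mu = 0$) and that the Lemma and Proposition \ref{formula} relate $\mathcal{L}_X\Pi_\mu$ and $\iota_{d_A\alpha}\Pi_\mu$ to the common quantity $\mbox{div}_\mu(\Pi_\mu^\sharp\alpha)$. The only point requiring care — and the main (minor) obstacle — is bookkeeping the degree-dependent signs: one must confirm that $(-1)^k$ in Proposition \ref{formula} specializes correctly at $k = m$ and that the sign in $\mathcal{L}_X\Pi_\mu = -(\partial_\mu X)\Pi_\mu$ matches, so that the two computed expressions for the coefficient of $\Pi_\mu$ genuinely coincide rather than differing by a sign. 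I would also remark at the end that, since $3 \le m$, $\Pi_\mu$ has order $\ge 3$ and so is a bona fide Nambu structure in the sense of Definition \ref{Def}, and it is maximal by construction, completing the proof.
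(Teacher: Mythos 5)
Your final, frame-free argument is correct and is essentially the paper's own proof: both hinge on $*_\mu\Pi_\mu=1$, hence $\partial_\mu\Pi_\mu=0$, and then Proposition \ref{formula} together with the commutator identity $\mathcal{L}_X\iota_{\Pi_\mu}\mu-\iota_{\Pi_\mu}\mathcal{L}_X\mu=\iota_{[X,\Pi_\mu]}\mu$ (your appeal to the Lemma is the same identity in disguise) to identify both sides of (\ref{lie-nambu-eqn}) with $-(\operatorname{div}_\mu\Pi_\mu^{\sharp}\alpha)\,\Pi_\mu$. The preliminary localization paragraph is superfluous, as you yourself note, and its claim that each side of (\ref{lie-nambu-eqn}) is separately $C^\infty(M)$-linear in $\alpha$ is not accurate (only their difference is), but nothing in your concluding argument depends on it.
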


\begin{proof}
We first prove that the evaluation function $\langle \Pi_\mu, \mu \rangle$ is identically $1$ on $M$. Let $x \in M$ and $\alpha_1, \ldots, \alpha_m$ be local sections
of the bundle $A^*$ around $x$ such that
\begin{align*}
 \mu (x) = \alpha_1 (x) \wedge \cdots \wedge \alpha_m (x).
\end{align*}
Then from the definition of $\Pi_\mu$ we get 
$$\langle \Pi_\mu (x), \mu (x) \rangle \mu (x) = \langle \Pi_\mu (x), \alpha_1 (x) \wedge \cdots \wedge \alpha_m (x) \rangle \mu (x) = \alpha_1 (x) \wedge \cdots \wedge \alpha_m (x) = \mu (x) .$$ Since $\mu (x) \neq 0$, we have
$\langle \Pi_\mu (x), \mu (x) \rangle =1$, which implies $\langle \Pi_\mu, \mu \rangle = 1$.

Hence we have $\partial_\mu(\Pi_\mu) = *_\mu^{-1} \circ d_A \circ *_\mu (\Pi_\mu) = 0$. Therefore, from Proposition \ref{formula}, we get
\begin{align*}
 \text{div}_\mu (\Pi_\mu^{\sharp} \alpha) + (-1)^m \iota_{d_A \alpha} \Pi_\mu = 0,
\end{align*}
for any $\alpha \in \Gamma (\wedge^{m-1}A^\ast).$
In other words,
\begin{align}\label{eqn1}
 \mathcal{L}_{\Pi_\mu^{\sharp} \alpha} \mu = (-1)^{m-1} (\iota_{d_A \alpha} \Pi_\mu) \mu.
\end{align}
From property (8) of Proposition \ref{properties-Lie derivative-contraction} we get
$$ 0 = \mathcal{L}_{\Pi_\mu^{\sharp} \alpha} \langle \Pi_\mu, \mu \rangle = \mathcal L_{\Pi_\mu^\sharp\alpha}\iota_{\Pi_\mu}\mu = \iota_{\Pi_\mu}\mathcal L_{\Pi_\mu^\sharp\alpha}\mu + \iota_{[\Pi_\mu^\sharp\alpha, \Pi_\mu]}\mu
= \langle \Pi_\mu, \mathcal{L}_{\Pi_\mu^{\sharp} \alpha}  \mu \rangle + \langle \mathcal{L}_{\Pi_\mu^{\sharp} \alpha} \Pi_\mu , \mu \rangle.$$
Therefore, using Equation (\ref{eqn1}), we get
\begin{align*}
 \langle \mathcal{L}_{\Pi_\mu^{\sharp} \alpha} \Pi_\mu , \mu \rangle = (-1)^m (\iota_{d_A \alpha} \Pi_\mu) \langle \Pi_\mu, \mu \rangle = (-1)^m (\iota_{d_A \alpha} \Pi_\mu).
\end{align*}
Note that $\Pi_\mu \in \Gamma(\wedge^\text{top}A)$ is nowhere vanishing, hence, $\mathcal{L}_{\Pi_\mu^{\sharp} \alpha} \Pi_\mu = (-1)^m (\iota_{d_A \alpha} \Pi_\mu) \Pi_\mu.$
Thus, $\Pi_\mu$ is a Nambu-structure of order $m$ and hence a maximal Nambu structure.
\end{proof}

\begin{remark}\label{f-nambu}
Let $\Pi \in \Gamma(\wedge^{\text{top}} A)$ be an arbitrary multisection of $A$. Then there exists a function $f \in C^\infty(M)$ such that
$\Pi = f \Pi_\mu$. Observe that
\begin{align*}
\mathcal{L}_{\Pi^{\sharp} \alpha} \Pi & = \mathcal{L}_{f \Pi_\mu^{\sharp} \alpha} f \Pi_\mu = f \mathcal L_{\Pi_\mu^{\sharp} \alpha}f\Pi_\mu - (\rho(\Pi_\mu^{\sharp} \alpha)f)f\Pi_\mu ~~ \mbox{(by Remark \ref{properties-top-degree} (1))}\\
& = f^2 \mathcal{L}_{\Pi_\mu^{\sharp} \alpha} \Pi_\mu - f ( \rho \Pi_\mu^{\sharp}(\alpha)f ) \Pi_\mu +  f ( \rho \Pi_\mu^{\sharp}(\alpha)f ) \Pi_\mu~~ \mbox{(by Remark \ref{properties-top-degree} (2))}\\
& = f^2 \mathcal{L}_{\Pi_\mu^{\sharp} \alpha} \Pi_\mu.
\end{align*}
Thus, $\mathcal{L}_{\Pi^{\sharp} \alpha} \Pi = f^2 (-1)^m (\iota_{d_A \alpha} \Pi_\mu) \Pi_\mu = (-1)^m (\iota_{d_A\alpha} \Pi) \Pi.$

Hence, $\Pi$ defines a (maximal) Nambu structure of order $m$. In other words, any $m$-multisection defines a Nambu structure.
We write $\Pi = \Pi_f$ to understand its relation with $\Pi_\mu.$
\end{remark}

\begin{exam}\label{Nambu-structure-not-NP}
Let $S^3$ be the unit sphere in $\mathbb R^4$ with coordinates $(a, b, x, y),~~ a^2 + b^2 + x^2 + y^2 = 1$. It is known that \cite{grabowski} $S^3$ admits a Poisson structure $\pi = A\wedge B,$ where
$$A = aC-\frac{\partial}{\partial a},~~B = -bC + \frac{\partial}{\partial b},~~\text{and}~~C = a\frac{\partial}{\partial a} + b\frac{\partial}{\partial b} + x\frac{\partial}{\partial x} + y\frac{\partial}{\partial y}.$$
If we identify this sphere with the group $SU(2)$ by
$$(a, b, x, y)\mapsto \left( \begin{array}{cc}
\alpha & -\bar{\nu}\\ \nu & \bar{\alpha}
\end{array}
\right), $$ where $\alpha = a+ ib,~~ \nu = x+iy,$ then $\pi$ can also be written in the form
$$\pi = (x^2 + y^2) \hat{X} \wedge \hat{Y} + (by +ax) \hat{X}\wedge \hat{H} + (ay - bx) \hat{Y} \wedge \hat{H},$$
where $\hat{X},~~\hat{Y},~~\hat{H}$ are the right-invariant vector fields on $SU(2)$ corresponding to the elements
$$X = \left( \begin{array}{cc}
0 & -1\\ 1 & 0
\end{array}
\right), ~~Y = \left( \begin{array}{cc}
0 & i\\ i & 0
\end{array}
\right), ~~H = \left( \begin{array}{cc}
i & 0\\ 0 & i
\end{array}
\right)$$ of the Lie algebra $su(2)$ of $SU(2).$ Thus the cotangent bundle $T^\ast SU(2)$ is an orientable Lie algebroid and hence admits a maximal Nambu structure by Proposition \ref{volume-nambu}. More generally, for any Poisson Lie group $G,$ of dimension $n \geq 3,$ its cotangent bundle $T^\ast G$ is an orientable Lie algebroid and hence admits a maximal Nambu structure.  
\end{exam}

\begin{exam}
Suppose $\phi : \mathfrak g \rightarrow \Gamma TM$ is a smooth action of a finite dimensional real Lie algebra $(\mathfrak g, [~, ~])$ on a smooth manifold $M$. Thus $\phi$ is a Lie algebra homomorphism from $\mathfrak g$ to the Lie algebra of vector fields on $M$. Let $A = M \times \mathfrak g \rightarrow M$ be the trivial bundle with fibre $\mathfrak g$. Then $A$ is a Lie algebroid, called the action Lie algebroid, whose bracket and anchor are given by
$$[u, v](x) = [u(x), v(x)] + (\phi(u(x))v)(x) - (\phi(v(x))u)(x); ~~\rho (x, X) = \phi(X)(x),$$
for  $u, v \in C^\infty(M , \mathfrak g),$ $x \in M$ and $X\in \mathfrak g$ \cite{mac-book}. If  dimension of $\mathfrak g$ is $m,$ then $A$ is a trivial vector bundle of rank $m$ and $\wedge^m A = M \times \wedge^m \mathfrak g = M \times \mathbb R.$ It follows from Remark \ref{f-nambu} that any smooth function on $M$ can be considered as a Nambu structure of order $m$ on the action Lie algebroid $A$.
\end{exam}

Given a Nambu-Poisson manifold $M$ of order $n$, one can define a family of Nambu-Poisson structures on $M$ of lower order than $n$, called subordinate structures \cite {takhtajan}, which satisfy certain matching conditions. In the following example we prove that a similar result holds in the general context. More specifically, we prove that a given  Nambu structure on a Lie algebroid $A$ induces subordinate structures on $A$ of lower order. 

\begin{exam}(Subordinate Nambu structures) \label{sub}
Let $A$ be a Lie algebroid with a Nambu structure $\Pi \in \Gamma(\wedge^nA)$ of order $n$ $(n > 3)$. Let $k$ be a positive integer such that $ n-k \geq 3.$ Let $\alpha_1, \ldots, \alpha_k \in \Gamma{A^*}$ be such that $d_A \alpha_i = 0$, for all $i=1, \ldots, k$. Consider the
$(n-k)$- multisection $\widetilde{\Pi} \in \Gamma(\wedge^{n-k}A)$ defined by $\widetilde{\Pi} = \iota_{\bar{\alpha}} \Pi,$ where $\bar{\alpha} = \alpha_1 \wedge \cdots \wedge \alpha_k$. Then the induced bundle map ${\widetilde{\Pi}}^{\sharp} : \Gamma(\wedge^{n-k-1}A^*) \rightarrow \Gamma A$ is given by ${\widetilde{\Pi}}^{\sharp} (\beta) = \iota_{\bar{\alpha} \wedge \beta} \Pi = \Pi^{\sharp}(\bar{\alpha} \wedge \beta),$ for all $\beta \in \Gamma(\wedge^{n-k-1}A^*).$ Therefore,
\begin{align*}
\mathcal{L}_{{\widetilde{\Pi}}^{\sharp} (\beta)} \widetilde{\Pi} & = \mathcal{L}_{ \Pi^{\sharp}(\bar{\alpha} \wedge \beta) } \iota_{\bar{\alpha}} \Pi = \iota_{\bar{\alpha}} \mathcal{L}_{ \Pi^{\sharp}(\bar{\alpha} \wedge \beta) } \Pi + \iota_{(\mathcal{L}_{ \Pi^{\sharp}(\bar{\alpha} \wedge \beta) } \bar{\alpha})} \Pi \\
& = (-1)^n \iota_{\bar{\alpha}} \big( \iota_{d_A(\bar{\alpha} \wedge \beta)}   \Pi   \big) \Pi +  \iota_{(\mathcal{L}_{ \Pi^{\sharp}(\bar{\alpha} \wedge \beta) } \bar{\alpha})} \Pi \\
& = (-1)^n \big( \iota_{d_A{\bar{\alpha}} \wedge \beta} \Pi \big) \iota_{\bar{\alpha}} \Pi + (-1)^{n-k} \big( \iota_{\bar{\alpha} \wedge d_A \beta} \Pi \big) \iota_{\bar{\alpha}} \Pi
+ \iota_{(\mathcal{L}_{ \Pi^{\sharp}(\bar{\alpha} \wedge \beta) } \bar{\alpha})} \Pi.
\end{align*}
Note that $\bar{\alpha} = \alpha_1 \wedge \cdots \wedge \alpha_k$ is  $d_A$-closed as each $\alpha_i$ is $d_A$-closed. Therefore, the first term on  the 
right hand side of the above equality vanishes, while the last term also vanishes by using the Cartan formula and the property of the contraction operator. Hence,
$$ \mathcal{L}_{{\widetilde{\Pi}}^{\sharp} (\beta)} \widetilde{\Pi} = (-1)^{n-k} \big( \iota_{\bar{\alpha} \wedge d_A \beta} \Pi \big) \iota_{\bar{\alpha}} \Pi = (-1)^{n-k} \big( \iota_{d_A \beta} \iota_{\bar{\alpha}} \Pi  \big) \iota_{\bar{\alpha}} \Pi = (-1)^{n-k} ( \iota_{d_A \beta} \widetilde{\Pi} ) \widetilde{\Pi}.$$

Thus, $\widetilde{\Pi}$ is a Nambu structure of order $n-k \geq 3$. We call these Nambu structures as {\it subordinate} Nambu structures of $\Pi$.
\end{exam}

\begin{defn}\label{hamiltonian-section}
Let $(A, \Pi)$ be a Lie algebroid with a Nambu structure of order $n$. Let $f_1, \ldots, f_{n-1} \in C^\infty(M).$ The section $\Pi^\sharp( d_Af_1\wedge \cdots \wedge d_Af_{n-1}) \in \Gamma A$ is called the Hamiltonian $A$-section corresponding to the Hamiltonians $f_1, \ldots, f_{n-1} \in C^\infty(M)$.
\end{defn}

Observe that for a Nambu-Poisson manifold $M,$ if we consider the Example \ref{lie-nam-exam} (1) then the Hamiltonian $TM$-section corresponding to the Hamiltonians $f_1, \ldots, f_{n-1}$ is precisely the Hamiltonian vector field $X_{f_1 \ldots f_{n-1}}.$
\begin{remark}\label{equivalence-of-defn}
It may be remarked that A. Wade \cite{wade} introduced the notion of a Nambu structure (of order $n$) on a Lie algebroid $A$ as an $n$-multisection $\Pi \in \Gamma(\wedge^nA)$ such that 
$$[ \Pi^\sharp \alpha, \Pi]^\sharp(\beta) = - \Pi^\sharp(\iota_{\Pi^\sharp\beta}d_A\alpha),$$ for all $\alpha, \beta \in \Gamma(\wedge^{n-1}A^\ast),$ where the bracket on the left hand side of the above equality is the Schouten bracket of multisections of $A$.
In the following proposition we show that Definition \ref{Def} of a Nambu structure on a Lie algebroid  and that of \cite{wade} are equivalent if the multisection $\Pi \in \Gamma(\wedge^nA)$ $(n\geq 3)$ is locally decomposable.
\end{remark}

\begin{defn}\label{locally-decomposable}
An $n$-multisection $\Pi \in \Gamma(\wedge^nA)$ $(n \geq 2)$ is called locally decomposable if  at each point $x \in M$, either $\Pi(x) = 0$ or there exists a neighborhood $U$ of $x$ such that
\begin{align*}
 \Pi|_U = X_1 \wedge \cdots \wedge X_n,
\end{align*}
for some local sections $X_i$ of the bundle $A$ defined on $U$.
\end{defn}

\begin{prop}\label{proof-equivalence-of-defn}
Let $(A, [~, ~], \rho)$ be a Lie algebroid over $M$. If $\Pi \in \Gamma (\wedge^nA)$ is a locally decomposable $n$-multisection of $A,~~n\geq 2,$ then the following conditions are equivalent.
\begin{enumerate}
\item $\mathcal L _{\Pi^\sharp\alpha}\Pi = (-1)^n(\iota_{d_A\alpha}\Pi)\Pi,$ for all $\alpha \in \Gamma (\wedge^{n-1}A^\ast).$
\item $[\Pi^\sharp \alpha, \Pi]^\sharp(\beta) = - \Pi^\sharp(\iota_{\Pi^\sharp\beta}d_A\alpha),$ for all $\alpha,~ \beta \in \Gamma(\wedge^{n-1}A^\ast).$
\end{enumerate}
\end{prop}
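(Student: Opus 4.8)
The plan is to reduce the equivalence to a single pointwise algebraic identity valid for decomposable multivectors, and then to pass between the two forms of the defining equation by contracting with arbitrary $(n-1)$-forms. Both conditions (1) and (2) are local in nature and all terms occurring in them are smooth sections, so it suffices to establish the equivalence on the open set $U=\{x\in M:\Pi(x)\neq 0\}$. Indeed, on the interior of $\{\Pi=0\}$ every term in (1) and in (2) vanishes identically, the set $U\cup\mathrm{int}\{\Pi=0\}$ is dense in $M$ (its complement is $\partial\{\Pi=0\}$, which has empty interior), and continuity then propagates any equality valid on that dense set to all of $M$. On $U$, local decomposability lets me write $\Pi=X_1\wedge\cdots\wedge X_n$ for a local frame $X_1,\dots,X_n$ of the rank-$n$ subbundle $D=\mathrm{Im}\,\Pi^{\sharp}\subseteq A|_U$.

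The key technical ingredient, which I would isolate as a lemma, is: \emph{if $\Pi\in\Gamma(\wedge^nA)$ is (locally) decomposable, then for every $\gamma\in\Gamma(\wedge^nA^*)$ and every $\beta\in\Gamma(\wedge^{n-1}A^*)$ one has} $\Pi^{\sharp}(\iota_{\Pi^{\sharp}\beta}\gamma)=(-1)^{n-1}(\iota_\gamma\Pi)\,\Pi^{\sharp}\beta$. To prove it I would argue pointwise on $U$ with $\Pi=X_1\wedge\cdots\wedge X_n$, using three elementary observations: (a) $\Pi^{\sharp}\eta=\iota_\eta\Pi$ depends only on the restriction $\eta|_D$, and $\Pi^{\sharp}\beta\in D$; (b) writing $\Pi^*\in\wedge^nD^*$ for the element dual to $\Pi$ ($\langle\Pi,\Pi^*\rangle=1$), every $\gamma\in\wedge^nA^*$ satisfies $\gamma|_D=(\iota_\gamma\Pi)\,\Pi^*$, so that $(\iota_v\gamma)|_D=(\iota_\gamma\Pi)\,\iota_v\Pi^*$ for $v\in D$; and (c) for $v\in D$, $\iota_{\iota_v\Pi^*}\Pi=(-1)^{n-1}v$, which is a one-line computation in a basis $e_1,\dots,e_n$ of $D$ adapted to $\Pi=e_1\wedge\cdots\wedge e_n$, $\Pi^*=f^1\wedge\cdots\wedge f^n$ (for $v=e_k$ one gets a sign $(-1)^{k-1}$ from $\iota_{e_k}\Pi^*$ and a sign $(-1)^{n-k}$ from the subsequent contractions). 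Combining (a)–(c) with $v=\Pi^{\sharp}\beta$ yields the identity.

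With the lemma in hand the equivalence is bookkeeping. Fix $\alpha\in\Gamma(\wedge^{n-1}A^*)$ and recall $\mathcal L_{\Pi^{\sharp}\alpha}\Pi=[\Pi^{\sharp}\alpha,\Pi]$. Since an $n$-multisection is determined by its contractions $\iota_\beta(\cdot)$ over $\beta\in\Gamma(\wedge^{n-1}A^*)$, and since $\iota_\beta$ is $C^\infty(M)$-linear (so it commutes with multiplication by the function $\iota_{d_A\alpha}\Pi$), condition (1) for this $\alpha$ is equivalent to the family of identities $\iota_\beta[\Pi^{\sharp}\alpha,\Pi]=(-1)^n(\iota_{d_A\alpha}\Pi)\,\Pi^{\sharp}\beta$, $\beta\in\Gamma(\wedge^{n-1}A^*)$; by the definition of $(\,\cdot\,)^{\sharp}$ the left side is $[\Pi^{\sharp}\alpha,\Pi]^{\sharp}(\beta)$. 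Likewise condition (2) for this $\alpha$ reads $[\Pi^{\sharp}\alpha,\Pi]^{\sharp}(\beta)=-\Pi^{\sharp}(\iota_{\Pi^{\sharp}\beta}d_A\alpha)$ for all $\beta$. The lemma applied with $\gamma=d_A\alpha$ gives $(-1)^n(\iota_{d_A\alpha}\Pi)\,\Pi^{\sharp}\beta=(-1)^n(-1)^{n-1}\Pi^{\sharp}(\iota_{\Pi^{\sharp}\beta}d_A\alpha)=-\Pi^{\sharp}(\iota_{\Pi^{\sharp}\beta}d_A\alpha)$, so the two right-hand sides coincide; hence (1) $\Leftrightarrow$ (2) on $U$, and therefore on all of $M$.

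The main obstacle is establishing part (c) of the lemma with the correct sign $(-1)^{n-1}$: once that pointwise computation is done, both implications fall out symmetrically from the same identity. A secondary point requiring care is the behaviour at zeros of $\Pi$ not interior to $\{\Pi=0\}$, which is why the reduction to $U$ must be phrased through density and continuity rather than a naive pointwise check; one should also record explicitly the (routine) fact that $\iota_\beta P=\iota_\beta Q$ for all $\beta\in\Gamma(\wedge^{n-1}A^*)$ forces $P=Q$ for $P,Q\in\Gamma(\wedge^nA)$, as this is what makes each implication reversible.
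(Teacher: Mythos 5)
Your proposal is correct and follows essentially the same route as the paper: both conditions share the left-hand side $[\Pi^\sharp\alpha,\Pi]^\sharp(\beta)$, so everything reduces to the pointwise tensorial identity $-\Pi^\sharp(\iota_{\Pi^\sharp\beta}d_A\alpha)=(-1)^n(\iota_{d_A\alpha}\Pi)\,\Pi^\sharp\beta$, which the paper verifies by an explicit local-frame computation that is your $\Pi^*$-based lemma phrased in coordinates. The only cosmetic difference is at the zeros of $\Pi$: since the identity involves no derivatives of $\Pi$, the paper just observes that both sides vanish pointwise wherever $\Pi(x)=0$, so your density-and-continuity detour, while valid, is unnecessary.
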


\begin{proof}
It is enough to show that
\begin{align}\label{proof-equiv-defn}
- \Pi^\sharp(\iota_{\Pi^\sharp\beta}d_A\alpha) = (-1)^n(\iota_{d_A\alpha}\Pi)\Pi^\sharp\beta, \hspace{2mm}\alpha,~ \beta \in \Gamma(\wedge^{n-1}A^\ast).
\end{align}
If $\Pi (x)= 0 $ for some $x \in M,$ then both sides of the above equality are zero at $x \in M.$ Suppose $\Pi (x) \neq 0$ for $x \in M$. Then there exists an open neighbourhood $U$ of $x$ in $M$ and local sections $X_1, \ldots, X_n \in \Gamma(A|_U)$ such that $\Pi$ is non-vanishing on $U$ and $$\Pi |_U = X_1 \wedge \cdots \wedge X_n.$$ We may assume that $U$ is a trivializing neighbourhood of the vector bundle $A$. Let the rank of $A$ be $m$. We extend $\{X_1, \ldots , X_n\}$ to  a trivialization $\{X_1, \ldots , X_n, X_{n+1}, \ldots , X_m\}$ of $A|_U.$ Consider the dual basis $\{X^\ast_1, \ldots , X^\ast_n, X^\ast_{n+1}, \ldots , X^\ast_m\}$ of sections of $A^\ast|_U.$ To prove the equality (\ref{proof-equiv-defn}), it is enough to consider the case for which
$$(d_A\alpha)|_U = a X^\ast_1\wedge \cdots \wedge X^\ast_n + \alpha^\prime, ~~\beta|_U = b X^\ast_1\wedge \cdots \wedge \widehat{X^\ast_k}\wedge \cdots \wedge X^\ast_n,~~1\leq k\leq n,$$
where $a,~ b \in C^\infty(U)$ and $\alpha ^\prime \in \Gamma(\wedge^nA^\ast|_U)$ involves terms $X^\ast_{i_1}\wedge \cdots \wedge X^\ast_{i_n}$ with $\{i_1, \ldots ,i_n\} \neq \{1, \ldots , n\}.$ In the above case, the left hand side of the equality (\ref{proof-equiv-defn}) becomes
\begin{align*}
-\Pi^\sharp(\iota_{\Pi^\sharp\beta}d_A\alpha) & = -(-1)^{n-k}ab\Pi^\sharp (\iota_{X_k}X^\ast_1\wedge \cdots \wedge X^\ast_n)\\
& = -(-1)^{n-k}(-1)^{k-1}ab\Pi^\sharp (X^\ast_1\wedge \cdots \wedge \widehat{X^\ast_k}\wedge \cdots \wedge X^\ast_n)\\
& = -(-1)^{n-k}(-1)^{k-1}(-1)^{n-k}abX_k = (-1)^kab X_k.  
\end{align*}
On the other hand, the right hand side of the equality (\ref{proof-equiv-defn}) becomes
$$(-1)^n(\iota_{d_A\alpha}\Pi)\Pi^\sharp\beta =(-1)^na\Pi^\sharp\beta = (-1)^n (-1)^{n-k}ab X_k = (-1)^kab X_k.$$ Thus the required equality holds. 
\end{proof}

\begin{remark}
\begin{enumerate}
\item At any point $x \in M$ the equality (\ref{proof-equiv-defn}) depends only on the values of $\Pi$, $\beta$ and $d_A\alpha$ at $x.$ Thus if $\Pi$ is decomposable at a point $x$, then the statement (1) of the above proposition holds at $x$ if and only if the statement (2) holds at $x.$
\item Although, in the present paper, we are concerned with Nambu structures of order $n >2,$ the defining condition (\ref{lie-nambu-eqn}) of a Nambu structure makes sense for $n=2$. However, as remarked in the introduction and also from the above proposition, it is clear that a Poisson bivector field will satisfy (\ref{lie-nambu-eqn}) for $n =2$ provided it is locally decomposable. For illustration, let us consider the Poisson structure 
$$\pi = \frac{\partial}{\partial x_1} \wedge \frac{\partial}{\partial y_1} + \frac{\partial}{\partial x_2} \wedge \frac{\partial}{\partial y_2}$$ on $\mathbb R^4$ which corresponds to the canonical symplectic structure on $\mathbb R^4.$  We claim that $\pi$ does not satisfy (\ref{lie-nambu-eqn}) for $n =2.$ To see this, observe that for $ \alpha = y_1dx_1,$ 
$$\mathcal L_{\pi^\sharp\alpha}\pi = \mathcal L_{y_1\frac{\partial}{\partial y_1}}\pi = - \frac{\partial}{\partial x_1} \wedge \frac{\partial}{\partial y_1}, ~~(\iota_{d\alpha}\pi)\pi = -\pi = -\frac{\partial}{\partial x_1} \wedge \frac{\partial}{\partial y_1} - \frac{\partial}{\partial x_2} \wedge \frac{\partial}{\partial y_2}.$$
Therefore,   $\mathcal L_{\pi^\sharp\alpha}\pi\neq (-1)^2(\iota_{d\alpha}\pi)\pi = (\iota_{d\alpha}\pi)\pi.$
\end{enumerate}
\end{remark}

It is known that the Nambu tensor of a Nambu-Poisson manifold of order $> 2$ is locally decomposable (cf. Theorem \ref{local-structure-NP-manifold}). The following result (Lemma 3.9, \cite{hagi}) provides a sufficient condition for a Nambu structure on a Lie algebroid to be locally decomposable. For completeness, we give a detailed proof. 

\begin{prop}\label{condition-for-local-decomposability}
 Let $(A, \Pi)$ be a Lie algebroid with a Nambu structure $\Pi \in \Gamma(\wedge^nA)$ of order $n \geq 3$. If $\Gamma{A^*}$ is locally generated by elements of the form
$d_Af,~~f \in C^\infty(M)$, then the Nambu tensor $\Pi$ is locally decomposable.
\end{prop}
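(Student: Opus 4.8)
The plan is to establish that $\Pi$ is decomposable at every point of $M$ and then to upgrade this pointwise decomposability to local decomposability. The linear-algebra input will be the classical criterion that an $n$-vector $w$ on a finite-dimensional vector space is decomposable if and only if $(\iota_\xi w)\wedge w=0$ for every $\xi$ in the $(n-1)$-st exterior power of the dual space: one always has $w\in\wedge^n D$ and, when $w\neq 0$, $\dim D\geq n$, where $D$ is the image of the contraction map $\xi\mapsto\iota_\xi w$; the vanishing $(\iota_\xi w)\wedge w=0$ forces $d\wedge w=0$ for all $d\in D$, hence $\dim D\leq n$ (expanding $w$ in a basis extending $n+1$ independent elements of $D$ would contradict this), so $\dim D=n$ and $w$ is a scalar multiple of a product of a basis of $D$. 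Consequently it will be enough to prove that $(\Pi^\sharp\alpha)\wedge\Pi=(\iota_\alpha\Pi)\wedge\Pi=0$ for every $\alpha\in\Gamma(\wedge^{n-1}A^*)$.

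Since $\Pi^\sharp$ is $C^\infty(M)$-linear and, by hypothesis, $\Gamma A^*$ is locally generated (indeed, locally framed) by differentials $d_Af$, the module $\Gamma(\wedge^{n-1}A^*)$ is locally generated by the $d_A$-closed forms $\eta=d_Af_1\wedge\cdots\wedge d_Af_{n-1}$, so it suffices to show $X\wedge\Pi=0$ for every Hamiltonian $A$-section $X=\Pi^\sharp(\eta)$. First I would apply (\ref{lie-nambu-eqn}) with $\alpha=\eta$: since $d_A\eta=0$ the right-hand side vanishes and $\mathcal L_X\Pi=0$. Then I would apply (\ref{lie-nambu-eqn}) with $\alpha=g\,\eta$ for arbitrary $g\in C^\infty(M)$; using $\Pi^\sharp(g\eta)=gX$ together with Proposition \ref{properties-Lie derivative-contraction} (4) the left-hand side becomes
\begin{align*}
\mathcal L_{gX}\Pi \;=\; g\,\mathcal L_X\Pi - X\wedge\iota_{d_Ag}\Pi \;=\; -\,X\wedge\iota_{d_Ag}\Pi ,
\end{align*}
while, since $d_A(g\eta)=d_Ag\wedge\eta$, a short manipulation with the contraction conventions (and $d_A\eta=0$) identifies the function $\iota_{d_Ag\wedge\eta}\Pi$ with $(-1)^{n-1}\langle d_Ag, X\rangle$. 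Comparing the two sides of (\ref{lie-nambu-eqn}) then gives the identity
\begin{align*}
X\wedge\iota_{d_Ag}\Pi \;=\; \langle d_Ag, X\rangle\,\Pi \qquad\text{for every } g\in C^\infty(M).
\end{align*}

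The decisive observation is that wedging this identity on the left with $X$ and using $X\wedge X=0$ yields $\langle d_Ag, X\rangle\,(X\wedge\Pi)=0$ for all $g$. At a point $p$ where $X(p)=0$ there is nothing to prove; where $X(p)\neq 0$, the generation hypothesis provides a $g$ with $\langle d_Ag, X\rangle(p)\neq 0$ (the $d_Ag(p)$ span $A_p^*$), so $(X\wedge\Pi)(p)=0$ in that case too. Thus $X\wedge\Pi\equiv 0$ for every Hamiltonian $A$-section, and by $C^\infty(M)$-linearity and local generation $(\Pi^\sharp\alpha)\wedge\Pi=0$ for all $\alpha$; the criterion then shows $\Pi(p)$ is decomposable for each $p$.

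To finish, near a point $x$ with $\Pi(x)\neq 0$ pointwise decomposability forces $\Pi^\sharp\colon\wedge^{n-1}A^*\to A$ to have constant rank $n$, so its image is a smooth rank-$n$ subbundle containing $\Pi$ in its top exterior power, and a local frame $Y_1,\dots,Y_n$ of it gives $\Pi=h\,Y_1\wedge\cdots\wedge Y_n$ with $h$ smooth and nowhere zero. I expect the main obstacle to be isolating the right consequence of (\ref{lie-nambu-eqn})---namely the displayed identity---and recognizing that a second contraction by $X$ collapses it; the linear-algebra criterion and the passage from pointwise to local decomposability are routine.
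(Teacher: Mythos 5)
Your proof is correct, and it takes a genuinely different route from the paper's, although both exploit the same basic mechanism: the defining identity (\ref{lie-nambu-eqn}) is not $C^\infty(M)$-linear in $\alpha$, so rescaling the test form by a function and comparing the two sides extracts algebraic constraints on $\Pi$. The paper tests (\ref{lie-nambu-eqn}) on $\alpha = f\,d_Af\wedge\beta$ and obtains $(\iota_\beta\iota_{d_Af}\Pi)\wedge(\iota_{d_Af}\Pi)=0$, i.e.\ the Pl\"ucker relations for the $(n-1)$-multisections $\iota_{d_Af}\Pi$, and then invokes the generation hypothesis to pass from decomposability of these contractions to decomposability of $\Pi$ itself (a linear-algebra step the paper leaves rather implicit, and whose stated conclusion $\Pi^{\sharp}(\alpha)\wedge(\iota_\eta\Pi)=0$ is arguably not the cleanest formulation). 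You instead test on a $d_A$-closed monomial $\eta$ and on $g\eta$, extract the identity $X\wedge\iota_{d_Ag}\Pi=\langle d_Ag,X\rangle\Pi$ for Hamiltonian sections $X=\Pi^\sharp\eta$ (which I checked is sign-consistent with the paper's contraction conventions), and collapse it by a second wedge with $X$ to land directly on the standard Pl\"ucker criterion $(\iota_\alpha\Pi)\wedge\Pi=0$ for $\Pi$ itself. Your route uses the generation hypothesis twice (once to reduce to Hamiltonian sections, once to find $g$ with $\langle d_Ag,X\rangle(p)\neq 0$ at points where $X(p)\neq 0$), but what it buys is a cleaner endgame: the criterion applies to $\Pi$ directly, and you also make explicit the upgrade from pointwise to local decomposability via the constant rank of $\Pi^\sharp$ on the open set where $\Pi\neq 0$ --- a point the paper's proof glosses over. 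Both arguments are sound; yours is more self-contained.
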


\begin{proof}
Let $f \in C^\infty(M)$ and $\beta \in \Gamma(\wedge^{n-2}A^*)$ be arbitrary. Take $\alpha = f d_Af \wedge \beta \in \Gamma(\wedge^{n-1}A^*)$. Note that by Proposition \ref{properties-Lie derivative-contraction} we have
\begin{align*}
 \mathcal{L}_{\Pi^{\sharp} \alpha} \Pi =& \mathcal{L}_{ f \Pi^{\sharp} (d_Af \wedge \beta)} \Pi = f \mathcal{L}_{ \Pi^{\sharp} (d_Af \wedge \beta)} \Pi - \Pi^{\sharp} (d_Af \wedge \beta) \wedge (\iota_{d_Af} \Pi) \\
=& (-1)^n f \big( \iota_{d_A (d_Af \wedge \beta) } \Pi \big) \Pi - \iota_{d_Af \wedge \beta} \Pi \wedge (\iota_{d_Af} \Pi)\\ 
=& -(-1)^n f  (\iota_{d_Af \wedge d_A\beta} \Pi) \Pi - (\iota_\beta \iota_{d_Af} \Pi) \wedge (\iota_{d_Af} \Pi)
\end{align*}
and
\begin{align*}
 (-1)^n \big( \iota_{d_A \alpha} \Pi \big) \Pi = -(-1)^n \big( \iota_{f d_Af \wedge d_A\beta} \Pi \big) \Pi = -(-1)^n f \big( \iota_{ d_Af \wedge d_A\beta} \Pi \big) \Pi.
\end{align*}
Since $\Pi$ is a Nambu structure of order $n$, we have $\mathcal{L}_{\Pi^{\sharp} \alpha} \Pi = (-1)^n \big( \iota_{d_A \alpha} \Pi \big) \Pi.$
Therefore, $(\iota_\beta \iota_{d_Af} \Pi) \wedge (\iota_{d_Af} \Pi) = 0,$
for all $f \in C^\infty(M)$ and $\beta \in \Gamma(\wedge^{n-2}A^*)$. This implies that $\iota_{d_Af} \Pi$ is locally decomposable. If
$\Gamma{A^*}$ is locally generated by elements of the form $d_Af$, then the above identity implies $\Pi^{\sharp}(\alpha) \wedge (\iota_\eta \Pi) = 0,$
for all $\alpha \in \Gamma(\wedge^{n-1}A^*)$ and $\eta \in \Gamma{A^*},$ proving local decomposability of $\Pi.$
\end{proof}

\begin{remark}
\begin{enumerate}
\item Any regular foliation $\mathcal{F}$ can be considered as a Lie algebroid with the inclusion map as its anchor. Then any Nambu structure on $\mathcal{F}$
is locally decomposable.
\item It may be remarked that A. Wade also proved a version of the above result (Proposition \ref{condition-for-local-decomposability}) (cf. Theorem 3.4 \cite{wade}). Thus it follows from Proposition \ref{proof-equivalence-of-defn} that for a Lie algebroid $A$ for which $\Gamma A^\ast$ is locally generated by elements of the form $d_Af,~~f \in C^\infty(M)$ the two definitions of Nambu structures are equivalent.  
\end{enumerate}
\end{remark}

In the next proposition, we show that if we have a Nambu structure on a Lie algebroid then the base manifold carries a natural Nambu-Poisson structure.

\begin{prop}\label{induced-NP-structure-base-prop}
 Let $(A, \Pi)$ be a Lie algebroid over $M$ with a Nambu structure $\Pi \in \Gamma (\wedge^nA)$ of order $n$, $n \geq 3$. Then the base $M$ is equipped with an induced Nambu-Poisson
structure of order $n$.
\end{prop}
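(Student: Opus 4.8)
The plan is to write down the candidate bracket explicitly and then verify the two defining conditions of a Nambu-Poisson structure. For $f_1,\dots,f_n\in C^\infty(M)$ we set
$$\{f_1,\dots,f_n\}:=\Pi(d_Af_1,\dots,d_Af_n).$$
This map is $\mathbb R$-multilinear and skew-symmetric simply because $\Pi\in\Gamma(\wedge^nA)$, and it takes values in $C^\infty(M)$. The Leibniz rule follows at once from the fact that $d_A$ is a derivation, $d_A(gh)=g\,d_Ah+h\,d_Ag$, together with the $C^\infty(M)$-linearity of $\Pi$ in each argument, which gives $\{f_1,\dots,f_{n-1},gh\}=g\{f_1,\dots,f_{n-1},h\}+h\{f_1,\dots,f_{n-1},g\}$. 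Thus the only substantial point is the fundamental identity.

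The crucial observation is that the Hamiltonian $A$-sections annihilate $\Pi$ under the Lie derivative, exactly as in the classical case (cf. Equation (\ref{lie-der-nambu})). Fix $f_1,\dots,f_{n-1}\in C^\infty(M)$, put $\alpha=d_Af_1\wedge\cdots\wedge d_Af_{n-1}\in\Gamma(\wedge^{n-1}A^\ast)$, and let $X:=\Pi^\sharp\alpha$ be the associated Hamiltonian $A$-section (Definition \ref{hamiltonian-section}). Since $d_A^2=0$ and $d_A$ is a derivation on $\Gamma(\wedge^\bullet A^\ast)$, we have $d_A\alpha=0$, so the defining identity (\ref{lie-nambu-eqn}) of the Nambu structure collapses to
$$\mathcal L_X\Pi=(-1)^n(\iota_{d_A\alpha}\Pi)\,\Pi=0.$$
Moreover, unwinding the definitions of $\Pi^\sharp$ and of the contraction operators from Section \ref{2}, one checks that $\rho(X)(g)=\langle d_Ag,\Pi^\sharp\alpha\rangle=\Pi(d_Af_1,\dots,d_Af_{n-1},d_Ag)=\{f_1,\dots,f_{n-1},g\}$ for every $g\in C^\infty(M)$, so that $X$ genuinely plays the role of the Hamiltonian vector field.

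With this in hand, I would evaluate $\mathcal L_X\Pi=0$ on the covectors $d_Ag_1,\dots,d_Ag_n$ using the explicit formula for the Lie derivative of a multisection (Definition \ref{Lie derivative-contraction}(4)):
$$0=(\mathcal L_X\Pi)(d_Ag_1,\dots,d_Ag_n)=\rho(X)\big(\Pi(d_Ag_1,\dots,d_Ag_n)\big)-\sum_{i=1}^n\Pi\big(d_Ag_1,\dots,\mathcal L_X(d_Ag_i),\dots,d_Ag_n\big).$$
By properties (9) and (10) of Proposition \ref{properties-Lie derivative-contraction}, $\mathcal L_X(d_Ag_i)=d_A(\mathcal L_Xg_i)=d_A(\rho(X)g_i)=d_A\{f_1,\dots,f_{n-1},g_i\}$. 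Substituting this and rewriting each term as a bracket, the equation above becomes precisely
$$\{f_1,\dots,f_{n-1},\{g_1,\dots,g_n\}\}=\sum_{i=1}^n\{g_1,\dots,g_{i-1},\{f_1,\dots,f_{n-1},g_i\},g_{i+1},\dots,g_n\},$$
the fundamental identity. Hence $(M,\{\,\cdot\,,\dots,\cdot\,\})$ is a Nambu-Poisson manifold of order $n$.

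The argument uses nothing beyond $d_A^2=0$ and the defining identity (\ref{lie-nambu-eqn}); in particular local decomposability of $\Pi$ plays no role, nor does any local normal form. The one place requiring care is the sign bookkeeping dictated by the conventions of Section \ref{2}, namely verifying the identifications $\rho(\Pi^\sharp\alpha)(g)=\{f_1,\dots,f_{n-1},g\}$ and $\mathcal L_X(d_Ag_i)=d_A\{f_1,\dots,f_{n-1},g_i\}$; once these are pinned down, the fundamental identity drops out of the single equation $\mathcal L_X\Pi=0$.
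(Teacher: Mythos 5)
Your proof is correct and follows essentially the same route as the paper: define $\{f_1,\dots,f_n\}=\Pi(d_Af_1,\dots,d_Af_n)$, observe that $d_A\alpha=0$ for $\alpha=d_Af_1\wedge\cdots\wedge d_Af_{n-1}$ so that (\ref{lie-nambu-eqn}) gives $\mathcal L_{\Pi^\sharp\alpha}\Pi=0$, and extract the fundamental identity by expanding the Lie derivative against $d_Ag_1,\dots,d_Ag_n$. The paper phrases the computation as expanding $\mathcal L_{\Pi^\sharp\alpha}\bigl(\Pi(d_Ag_1,\dots,d_Ag_n)\bigr)$ rather than starting from $(\mathcal L_{\Pi^\sharp\alpha}\Pi)(d_Ag_1,\dots,d_Ag_n)=0$, but this is the same identity rearranged.
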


\begin{proof}\label{induced-NP-structure-base}
Define a $n$-bracket on $C^\infty(M)$ by
\begin{align*}
 \{f_1, \ldots, f_n\}_{\Pi} =  \Pi (d_A f_1, \ldots, d_A f_n),
\end{align*}
for $f_1, \ldots, f_n,  \in C^\infty (M)$. The bracket is obviously skew-symmetric and satisfies derivation property. Let $g \in C^\infty(M).$ Observe that
\begin{align*}
 \mathcal{L}_{\Pi^{\sharp}(d_A f_1 \wedge \cdots \wedge d_A f_{n-1})} g =& \rho \Pi^{\sharp}(d_A f_1 \wedge \cdots \wedge d_A f_{n-1}) (g) = \langle \Pi^{\sharp}(d_A f_1 \wedge \cdots \wedge d_A f_{n-1}), d_A g \rangle\\
=& \Pi (d_A f_1, \ldots , d_A f_{n-1}, d_A g) = \{f_1, \ldots, f_{n-1}, g \}_{\Pi}.
\end{align*}
Therefore, for $g_1, \ldots , g_n \in C^\infty(M),$
\begin{align*}
&\{f_1, \ldots, f_{n-1}, \{g_1, \ldots, g_n\}_\Pi \}_\Pi = \mathcal{L}_{\Pi^{\sharp}(d_A f_1 \wedge \cdots \wedge d_A f_{n-1})} \Pi (d_A g_1, \ldots, d_A g_n)\\
=& (\mathcal{L}_{\Pi^{\sharp}(d_A f_1 \wedge \cdots \wedge d_A f_{n-1})} \Pi) (d_A g_1, \ldots, d_A g_n) + \sum_{i=1}^n \Pi (d_A g_1, \ldots, \mathcal{L}_{\Pi^{\sharp}(d_A f_1 \wedge \cdots \wedge d_A f_{n-1})} d_A g_i, \ldots, d_A g_n)\\
=& \sum_{i=1}^n \Pi (d_A g_1, \ldots, d_A \{f_1 , \ldots , f_{n-1},  g_i\}_\Pi, \ldots, d_A g_n).
\end{align*}
Observe that the first term on the right hand side of the above equality vanishes since $\Pi$ is a Nambu structure. Thus,
$$\{f_1, \ldots, f_{n-1}, \{g_1, \ldots, g_n\}_\Pi \}_\Pi = \sum_{i=1}^n \{g_1, \ldots, \{f_1 , \ldots , f_{n-1},  g_i\}_\Pi, \ldots, g_n \}_\Pi,$$
hence the bracket $\{~, \ldots, ~\}_\Pi$ satisfies the fundamental identity, proving that $(M,\{~, \ldots, ~\}_{\Pi})$ is a Nambu-Poisson manifold.
\end{proof}

Next we prove that the converse of the above proposition holds under suitable condition on the Lie algebroid.
\begin{prop}
Suppose $(A, ~[~,~], \rho)$ is a Lie algebroid over a Nambu-Poisson manifold $M$ with a Nambu tensor $\Lambda \in \Gamma(\wedge^nTM).$ Assume that $\Gamma A^\ast$ is locally generated by elements of the form $d_Af = \rho^\ast df,$ for $f \in C^\infty(M).$ Then the $n$-multisection $\Pi \in \Gamma(\wedge^nA)$ defined by 
$$\Pi (d_Af_1, \ldots , d_Af_n) = \Lambda(df_1, \ldots ,df_n),$$ 
for $f_1, \ldots, f_n \in C^\infty (M),$ is a Nambu structure on the Lie algebroid $A.$ The induced Nambu structure on $M$ coincides with $\Lambda.$ 
\end{prop}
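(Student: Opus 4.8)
The plan is to verify directly that $\Pi$ satisfies the defining condition \eqref{lie-nambu-eqn} of Definition \ref{Def} by exploiting the hypothesis that $\Gamma A^\ast$ is locally generated by exact forms $d_A f = \rho^\ast df$. First I would check that $\Pi$ is well defined: since $\Gamma A^\ast$ is locally spanned by the $d_A f$, an $n$-multisection is determined by its values on tuples $(d_A f_1,\ldots,d_A f_n)$, and the prescribed values $\Lambda(df_1,\ldots,df_n)$ are consistent because $\Lambda$ is $C^\infty(M)$-multilinear and skew-symmetric and $d_A$ is a derivation; thus $\Pi$ exists and is unique. The key structural observation is that $\rho$ intertwines the relevant operations: $\rho^\ast \circ d = d_A \circ$ (pullback on functions), so $\rho(\Pi^\sharp(d_Af_1\wedge\cdots\wedge d_Af_{n-1}))$ acts on $g\in C^\infty(M)$ exactly as the Hamiltonian vector field $X_{f_1\cdots f_{n-1}}$ of $(M,\Lambda)$, since $\langle \Pi^\sharp(d_Af_1\wedge\cdots\wedge d_Af_{n-1}), d_A g\rangle = \Pi(d_Af_1,\ldots,d_Af_{n-1},d_Ag) = \Lambda(df_1,\ldots,df_{n-1},dg)$.

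Next I would establish \eqref{lie-nambu-eqn}. By Proposition \ref{proof-equivalence-of-defn} and the remark following Proposition \ref{condition-for-local-decomposability}, once one knows $\Pi$ is locally decomposable it suffices to check either form of the condition; and local decomposability of $\Pi$ will itself follow once \eqref{lie-nambu-eqn} is verified, via Proposition \ref{condition-for-local-decomposability}, using the generation hypothesis. So the real work is to prove, for $\alpha\in\Gamma(\wedge^{n-1}A^\ast)$, that $\mathcal{L}_{\Pi^\sharp\alpha}\Pi = (-1)^n(\iota_{d_A\alpha}\Pi)\Pi$. Because both sides are $C^\infty(M)$-local in $\alpha$ and both sides behave predictably under multiplication of $\alpha$ by functions (using Proposition \ref{properties-Lie derivative-contraction}(4), exactly as in the proof of Proposition \ref{condition-for-local-decomposability}), it is enough to check it for $\alpha$ of the form $\alpha = d_A f_1\wedge\cdots\wedge d_A f_{n-1}$ with $f_i\in C^\infty(M)$. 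For such $\alpha$ one has $d_A\alpha = 0$, so the right-hand side vanishes, and the identity reduces to showing $\mathcal{L}_{\Pi^\sharp\alpha}\Pi = 0$, i.e. that the Hamiltonian $A$-section $Z := \Pi^\sharp(d_Af_1\wedge\cdots\wedge d_Af_{n-1})$ satisfies $[Z,\Pi]=0$. To see this, evaluate $[Z,\Pi]$ on tuples $(d_Ag_1,\ldots,d_Ag_n)$: expanding the Lie-derivative formula from Definition \ref{Lie derivative-contraction}(4) and using $\mathcal{L}_Z d_Ag_i = d_A(\rho(Z)g_i) = d_A\{f_1,\ldots,f_{n-1},g_i\}_\Lambda$ (by Proposition \ref{properties-Lie derivative-contraction}(10) and the Hamiltonian identification above), one gets
$$([Z,\Pi])(d_Ag_1,\ldots,d_Ag_n) = \{f_1,\ldots,f_{n-1},\{g_1,\ldots,g_n\}_\Lambda\}_\Lambda - \sum_{i=1}^n \{g_1,\ldots,\{f_1,\ldots,f_{n-1},g_i\}_\Lambda,\ldots,g_n\}_\Lambda,$$
which is zero by the fundamental identity for $\Lambda$. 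Since the $d_Ag_i$ locally span $\Gamma A^\ast$, this forces $[Z,\Pi]=0$, completing the verification of \eqref{lie-nambu-eqn}.

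Having shown $\Pi$ is a Nambu structure, local decomposability follows from Proposition \ref{condition-for-local-decomposability}, and the final claim that the induced Nambu-Poisson structure on $M$ (in the sense of Proposition \ref{induced-NP-structure-base-prop}) coincides with $\Lambda$ is immediate: the induced bracket is $\{g_1,\ldots,g_n\}_\Pi = \Pi(d_Ag_1,\ldots,d_Ag_n) = \Lambda(dg_1,\ldots,dg_n)$ by the very definition of $\Pi$.

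The main obstacle I anticipate is purely bookkeeping rather than conceptual: one must be careful that the reduction to $\alpha = d_Af_1\wedge\cdots\wedge d_Af_{n-1}$ is legitimate, i.e. that checking \eqref{lie-nambu-eqn} on such $\alpha$ plus the function-multiplication behavior really does give it for all $\alpha$, given that a general local generator of $\Gamma(\wedge^{n-1}A^\ast)$ is a sum of terms $g\, d_Af_1\wedge\cdots\wedge d_Af_{n-1}$; this is the same mechanism used in Proposition \ref{condition-for-local-decomposability} and in Example \ref{lie-nam-exam}(1), so it should go through, but it requires tracking the extra terms produced by Proposition \ref{properties-Lie derivative-contraction}(4) and checking they cancel against $\iota_{d_A\alpha}\Pi$. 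A secondary point to handle cleanly is the well-definedness of $\Pi$ under the generation hypothesis, which should be spelled out since $d_A$ need not be injective on functions.
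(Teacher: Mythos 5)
Your strategy is genuinely different from the paper's, and the step on which it pivots has a real gap. The paper does not reduce to exact forms at all: it observes that, under the generation hypothesis, every local section of $\wedge^{n-1}A^\ast$ is of the form $\wedge^{n-1}\rho^\ast(\alpha)$ for an \emph{arbitrary} local $(n-1)$-form $\alpha$ on $M$, evaluates both sides of \eqref{lie-nambu-eqn} on tuples $(d_Af_1,\ldots,d_Af_n)$, and thereby reduces the claim to the identity $\mathcal{L}_{\Lambda^\sharp\alpha}\Lambda=(-1)^n(\iota_{d\alpha}\Lambda)\Lambda$ for all $\alpha\in\Omega^{n-1}(M)$ --- which is Example \ref{lie-nam-exam}(1) and rests on the Darboux normal form (Theorem \ref{local-structure-NP-manifold}), i.e.\ on local decomposability of $\Lambda$. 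You instead verify \eqref{lie-nambu-eqn} only for $\alpha=d_Af_1\wedge\cdots\wedge d_Af_{n-1}$, where it collapses to the fundamental identity (that part of your computation is correct), and then pass to general $\alpha$ by ``function-multiplication behavior.'' That passage is where the argument breaks.

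Concretely: if \eqref{lie-nambu-eqn} holds for $\alpha$, then replacing $\alpha$ by $g\alpha$ adds the term $-\Pi^\sharp\alpha\wedge\iota_{d_Ag}\Pi$ to the left side (Proposition \ref{properties-Lie derivative-contraction}(4)) and the term $(-1)^n(\iota_{d_Ag\wedge\alpha}\Pi)\,\Pi$ to the right side. These two extra terms agree when $\Pi$ is pointwise decomposable but not in general (e.g.\ $\Pi=e_1\wedge e_2\wedge e_3+e_4\wedge e_5\wedge e_6$, $\alpha=e_1^\ast\wedge e_2^\ast$, $d_Ag=e_4^\ast$ gives $-e_3\wedge e_5\wedge e_6$ versus $0$). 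The paper's own remark following Proposition \ref{proof-equivalence-of-defn} exhibits exactly this failure: the symplectic bivector on $\mathbb{R}^4$ satisfies \eqref{lie-nambu-eqn} for every exact $1$-form yet fails it for $y_1\,dx_1$. Since your argument nowhere uses $n\geq 3$ or decomposability, it would ``prove'' that case as well, so it cannot be complete; and your plan to obtain decomposability of $\Pi$ from Proposition \ref{condition-for-local-decomposability} only \emph{after} \eqref{lie-nambu-eqn} is established makes the scheme circular. The gap is repairable in two ways: follow the paper and check \eqref{lie-nambu-eqn} on all pullbacks $\wedge^{n-1}\rho^\ast(\alpha)$ directly, invoking Example \ref{lie-nam-exam}(1); or first prove $\Pi$ is locally decomposable independently of \eqref{lie-nambu-eqn} (Theorem \ref{local-structure-NP-manifold} applied to $\Lambda$, plus the fact that the generation hypothesis forces $\rho$ to be fiberwise injective, so $\Pi$ inherits decomposability from $\Lambda=\wedge^n\rho(\Pi)$), after which your bookkeeping does close up. A minor further point: well-definedness of $\Pi$ requires $\Lambda$ to annihilate $\ker(\wedge^n\rho^\ast)$, which multilinearity alone does not give; the paper is silent on this too. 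The final claim about the induced structure is handled identically in both arguments.
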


\begin{proof}
First observe that $\Gamma(\wedge^{n-1}A^\ast)$ is locally generated by elements of the form 
$$(\wedge^{n-1}\rho^\ast)(\alpha),~~\alpha \in \Gamma(\wedge^{n-1}T^\ast M).$$ To check that $\Pi$ is a Nambu structure, let $\alpha \in \Gamma(\wedge^{n-1}T^\ast M).$ Then by Definition \ref{Lie derivative-contraction} of the Lie derivative operator we have
\begin{align*}
& (\mathcal L_{\Pi^\sharp(\wedge^{n-1}\rho^\ast(\alpha))}\Pi)(d_Af_1, \ldots, d_Af_n)\\
& = \mathcal L_{\Pi^\sharp(\wedge^{n-1}\rho^\ast(\alpha))}(\Pi(d_Af_1, \ldots, d_Af_n)) - \sum_{i=1}^n \Pi(d_Af_1, \ldots, \mathcal L_{\Pi^\sharp(\wedge^{n-1}\rho^\ast(\alpha))}d_Af_i, \ldots , d_Af_n)\\
& = \mathcal L_{\Lambda^\sharp(\alpha)}\Lambda (df_1, \ldots, df_n) - \sum_{i=1}^n \Lambda(df_1, \ldots, \mathcal L_{\Lambda^\sharp(\alpha)}df_i, \ldots , df_n)\\
& = (\mathcal L_{\Lambda^\sharp(\alpha)}\Lambda)(df_1, \ldots, df_n)\\
& = (-1)^n (\iota_{d\alpha}\Lambda)\Lambda(df_1, \ldots, df_n)~~\mbox{(as $\Lambda$ is a Nambu structure on $TM.$)}\\
& = (-1)^n(\iota_{d_A(\wedge^{n-1}\rho^\ast(\alpha))}\Pi)\Pi(d_Af_1, \ldots, d_Af_n). 
\end{align*}

Thus $\Pi$ is a Nambu structure on $A$. The last statement is clear from the definition of $\Pi$.
\end{proof}

It is known that \cite{ibanez}, if $M$ is a Nambu-Poisson manifold of order $n$, then $\wedge^{n-1}T^*M$ carries a Leibniz algebroid structure. For Lie algebroid with Nambu structure we have a similar result as proved below.

\begin{prop}\label{leib-algbd}
 Let $(A, \Pi)$ be a Lie algebroid over $M$ with a Nambu structure of order $n$, $n \geq 3$. Then the triple
$(\wedge^{n-1}A^*, [ ~,~ ], \rho \circ \Pi^{\sharp})$ is a Leibniz algebroid over $M$, where the bracket $[~,~]$ is defined by
\begin{align}\label{leibniz-bracket}
 [ \alpha, \beta ] = \mathcal{L}_{\Pi^{\sharp} \alpha} \beta + (-1)^n (\iota_{d_A \alpha} \Pi) \beta,
\end{align}
for all $\alpha, \beta \in \Gamma(\wedge^{n-1}A^*).$
\end{prop}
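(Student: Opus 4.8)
The plan is to verify the three defining axioms of a (left) Leibniz algebroid from Definition \ref{Def-leib-algbd} for the triple $(\wedge^{n-1}A^*, [~,~], \rho\circ\Pi^\sharp)$ with the bracket (\ref{leibniz-bracket}). The two "easy" axioms are the Leibniz rule for the anchor and the compatibility $\rho\circ\Pi^\sharp([\alpha,\beta]) = [\rho\Pi^\sharp\alpha, \rho\Pi^\sharp\beta]$; the hard one is the left Leibniz identity itself. Before starting, I would record the auxiliary identity that does all the work, namely
\begin{align*}
\Pi^\sharp([\alpha,\beta]) = \mathcal L_{\Pi^\sharp\alpha}(\Pi^\sharp\beta) - \mathcal L_{\Pi^\sharp\beta}(\Pi^\sharp\alpha)
\end{align*}
or rather a one-sided version of it, together with the Nambu condition $\mathcal L_{\Pi^\sharp\alpha}\Pi = (-1)^n(\iota_{d_A\alpha}\Pi)\Pi$ from (\ref{lie-nambu-eqn}). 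The key computation is that for the bracket (\ref{leibniz-bracket}) one has $\Pi^\sharp[\alpha,\beta] = [\Pi^\sharp\alpha, \Pi^\sharp\beta]$ (the Lie bracket of sections of $A$): indeed $\Pi^\sharp[\alpha,\beta] = \iota_{[\alpha,\beta]}\Pi = \iota_{\mathcal L_{\Pi^\sharp\alpha}\beta}\Pi + (-1)^n(\iota_{d_A\alpha}\Pi)\iota_\beta\Pi$, and since $\iota_{\mathcal L_X\beta}\Pi = \mathcal L_X(\iota_\beta\Pi) - \iota_\beta(\mathcal L_X\Pi)$, applying this with $X = \Pi^\sharp\alpha$ and using the Nambu identity to rewrite $\iota_\beta(\mathcal L_{\Pi^\sharp\alpha}\Pi) = (-1)^n(\iota_{d_A\alpha}\Pi)\iota_\beta\Pi$ makes the two $(-1)^n(\iota_{d_A\alpha}\Pi)\iota_\beta\Pi$ terms cancel, leaving $\Pi^\sharp[\alpha,\beta] = \mathcal L_{\Pi^\sharp\alpha}(\iota_\beta\Pi) = [\Pi^\sharp\alpha, \Pi^\sharp\beta]$ by the Lie-derivative formula for multisections.

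With that identity in hand, axiom (3) is immediate: $\rho\Pi^\sharp[\alpha,\beta] = \rho[\Pi^\sharp\alpha,\Pi^\sharp\beta] = [\rho\Pi^\sharp\alpha,\rho\Pi^\sharp\beta]$, the last step being property (2) of a Lie algebroid. For axiom (2), the $C^\infty(M)$-linearity in the second slot, I would compute $[\alpha, f\beta]$ directly from (\ref{leibniz-bracket}): $\mathcal L_{\Pi^\sharp\alpha}(f\beta) = f\mathcal L_{\Pi^\sharp\alpha}\beta + (\rho\Pi^\sharp\alpha)(f)\beta$ by Proposition \ref{properties-Lie derivative-contraction}(5), and $(-1)^n(\iota_{d_A\alpha}\Pi)f\beta$ is already $C^\infty$-linear, so $[\alpha,f\beta] = f[\alpha,\beta] + (\rho\Pi^\sharp\alpha)(f)\beta$, which is exactly the required Leibniz rule with anchor $\rho\circ\Pi^\sharp$.

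The main obstacle is axiom (1), the left Leibniz identity $[\alpha,[\beta,\gamma]] = [[\alpha,\beta],\gamma] + [\beta,[\alpha,\gamma]]$. My approach would be to expand all brackets via (\ref{leibniz-bracket}), then repeatedly use: (a) the commutator formula $[\mathcal L_X,\mathcal L_Y] = \mathcal L_{[X,Y]}$ (Proposition \ref{properties-Lie derivative-contraction}(7)) with $X = \Pi^\sharp\alpha$, $Y = \Pi^\sharp\beta$ and the identity $[\Pi^\sharp\alpha,\Pi^\sharp\beta] = \Pi^\sharp[\alpha,\beta]$ just established; (b) the fact that $\mathcal L_{\Pi^\sharp\alpha}$ commutes with $d_A$ (Proposition \ref{properties-Lie derivative-contraction}(10)), which lets one move Lie derivatives past the $d_A$ in terms like $\iota_{d_A[\alpha,\beta]}\Pi$; and (c) the Nambu identity again to handle the surviving $(\iota_{d_A(-)}\Pi)\Pi$ contractions — in particular $\mathcal L_{\Pi^\sharp\alpha}\big((\iota_{d_A\beta}\Pi)\gamma\big) = (\iota_{d_A\beta}\Pi)\mathcal L_{\Pi^\sharp\alpha}\gamma + \big(\mathcal L_{\Pi^\sharp\alpha}(\iota_{d_A\beta}\Pi)\big)\gamma$ and one rewrites $\mathcal L_{\Pi^\sharp\alpha}(\iota_{d_A\beta}\Pi) = \iota_{\mathcal L_{\Pi^\sharp\alpha}d_A\beta}\Pi + \iota_{d_A\beta}\mathcal L_{\Pi^\sharp\alpha}\Pi = \iota_{d_A\mathcal L_{\Pi^\sharp\alpha}\beta}\Pi + (-1)^n(\iota_{d_A\alpha}\Pi)(\iota_{d_A\beta}\Pi)$. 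Carefully bookkeeping the many scalar coefficients of the form $(\iota_{d_A(-)}\Pi)$ and checking that all the cross terms cancel in pairs is where the real labor lies; I expect a clean cancellation once the substitution $[\Pi^\sharp\alpha,\Pi^\sharp\beta] = \Pi^\sharp[\alpha,\beta]$ is systematically used, but verifying it term-by-term is the delicate part of the proof. As a final remark I would note that this is precisely the computation that, specialized to $A = TM$ with $\Pi = \Lambda$ the Nambu tensor, recovers the Leibniz algebroid structure of \cite{ibanez} on $\wedge^{n-1}T^*M$.
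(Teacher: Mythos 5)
Your proposal is correct and follows essentially the same route as the paper's proof: the identity $\Pi^{\sharp}[\alpha,\beta]=[\Pi^{\sharp}\alpha,\Pi^{\sharp}\beta]$ is exactly the paper's Step 1 (obtained by the same contraction computation plus the Nambu condition (\ref{lie-nambu-eqn})), the derivation property and the anchor compatibility are handled identically, and the Leibniz identity is verified by the same expansion using $[\mathcal{L}_{\Pi^{\sharp}\alpha},\mathcal{L}_{\Pi^{\sharp}\beta}]=\mathcal{L}_{\Pi^{\sharp}[\alpha,\beta]}$ together with a scalar identity controlling $\iota_{d_A[\alpha,\beta]}\Pi$. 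The only cosmetic difference is that the paper isolates that scalar identity as a separate Step 2, proved by computing $\mathcal{L}_{[\Pi^{\sharp}\alpha,\Pi^{\sharp}\beta]}\Pi$ and cancelling the factor $\Pi$ (with the zeros of $\Pi$ treated separately), whereas you extract the same information inline from $\mathcal{L}_{\Pi^{\sharp}\alpha}(\iota_{d_A\beta}\Pi)=\iota_{d_A\mathcal{L}_{\Pi^{\sharp}\alpha}\beta}\Pi+(-1)^n(\iota_{d_A\alpha}\Pi)(\iota_{d_A\beta}\Pi)$, which works equally well.
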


\begin{proof}
 The bracket $[ ~, ~ ]$ defined above clearly satisfies derivation property
\begin{align*}
	[ \alpha, f \beta ] = f [ \alpha, \beta ] + (\rho \Pi^{\sharp}(\alpha) f ) \beta.
\end{align*}
The rest of the proof is divided into the following steps:\\
{\bf Step 1}: We claim that
\begin{align}
	[\Pi^{\sharp} \alpha, \Pi^{\sharp} \beta] = \Pi^{\sharp}  [ \alpha, \beta ].
\end{align}
This is because
\begin{align*}
  [\Pi^{\sharp} \alpha, \Pi^{\sharp} \beta] =& \iota_\beta [\Pi^{\sharp} \alpha, \Pi] + \iota_{ \mathcal{L}_{\Pi^{\sharp} \alpha} \beta} \Pi = \iota_\beta  \mathcal{L}_{\Pi^{\sharp} \alpha} \Pi +  \iota_{ \mathcal{L}_{\Pi^{\sharp} \alpha} \beta} \Pi \\
=& (-1)^n ( \iota_{d_A \alpha} \Pi) \iota_\beta \Pi + \iota_{ \mathcal{L}_{\Pi^{\sharp} \alpha} \beta} \Pi  = \iota_{[ \alpha,  \beta ]} \Pi = \Pi^{\sharp} [ \alpha,  \beta ].
\end{align*}
Moreover, it follows that $\rho \Pi^{\sharp} [ \alpha,  \beta ] =  [\rho \Pi^{\sharp} \alpha, \rho \Pi^{\sharp} \beta] $.\\
{\bf Step 2}: Next we prove that 
\begin{align} \label{eqn2}
 \iota_{d_A [ \alpha,  \beta ]} \Pi = \rho \Pi^{\sharp} (\alpha) \cdot (\iota_{d_A \beta} \Pi) - \rho \Pi^{\sharp} (\beta) \cdot (\iota_{d_A \alpha} \Pi).
\end{align}
To see this, note that if $\Pi (x) = 0$, for $x \in M$, then both sides of the equation is zero at $x$, since $(\rho \Pi^{\sharp}(\alpha))_x = 0$. Otherwise, observe that
\begin{align*}
& (-1)^n (\iota_{d_A [ \alpha,  \beta ]} \Pi) \Pi = \mathcal{L}_{\Pi^{\sharp}  [ \alpha, \beta ] } \Pi = \mathcal{L}_{[ \Pi^{\sharp} \alpha, \Pi^{\sharp} \beta ] } \Pi \hspace*{1cm} (\text{using}~~ {\bf Step ~~1}) \\
=& \mathcal{L}_{\Pi^{\sharp} \alpha} \mathcal{L}_{\Pi^{\sharp} \beta} \Pi - \mathcal{L}_{\Pi^{\sharp} \beta} \mathcal{L}_{\Pi^{\sharp} \alpha} \Pi = \mathcal{L}_{\Pi^{\sharp} \alpha} \big(  (-1)^n (\iota_{d_A \beta} \Pi) \Pi  \big) - \mathcal{L}_{\Pi^{\sharp} \beta} \big(  (-1)^n (\iota_{d_A \alpha} \Pi)  \Pi  \big) \\
=& (-1)^n \bigg[ (\iota_{d_A \beta} \Pi) \mathcal{L}_{\Pi^{\sharp} \alpha} \Pi + \rho \Pi^{\sharp} (\alpha) \cdot (\iota_{d_A \beta} \Pi) \Pi \bigg] \\
& \hspace*{0.5cm} - (-1)^n \bigg[ (\iota_{d_A \alpha} \Pi) \mathcal{L}_{\Pi^{\sharp} \beta} \Pi + \rho \Pi^{\sharp} (\beta) \cdot (\iota_{d_A \alpha} \Pi) \Pi \bigg] \\
=& (-1)^n \bigg[  \rho \Pi^{\sharp} (\alpha) \cdot (\iota_{d_A \beta} \Pi)  -  \rho \Pi^{\sharp} (\beta) \cdot (\iota_{d_A \alpha} \Pi)   \bigg] \Pi.
\end{align*}
{\bf Step 3}: It remains to prove that the bracket satisfies the Leibniz identity. Note that
\begin{align*}
& [ \alpha ,[ \beta,  \gamma ] ] - [ \beta, [ \alpha,  \gamma ]] \\
=& \mathcal{L}_{\Pi^{\sharp} \alpha} [ \beta,  \gamma ] + (-1)^n (\iota_{d_A \alpha} \Pi) [ \beta,  \gamma ]
- \mathcal{L}_{\Pi^{\sharp} \beta } [ \alpha,  \gamma ] - (-1)^n (\iota_{d_A \beta} \Pi) [ \alpha,  \gamma ] \\
=& \mathcal{L}_{\Pi^{\sharp} \alpha} \mathcal{L}_{\Pi^{\sharp} \beta} \gamma + (-1)^n \mathcal{L}_{\Pi^{\sharp} \alpha}  (\iota_{d_A \beta} \Pi) \gamma
+ (-1)^n (\iota_{d_A \alpha} \Pi) \mathcal{L}_{\Pi^{\sharp} \beta} \gamma + (\iota_{d_A \alpha} \Pi) (\iota_{d_A \beta} \Pi) \gamma \\
& - \mathcal{L}_{\Pi^{\sharp} \beta} \mathcal{L}_{\Pi^{\sharp} \alpha} \gamma - (-1)^n \mathcal{L}_{\Pi^{\sharp} \beta}  (\iota_{d_A \alpha} \Pi) \gamma
- (-1)^n (\iota_{d_A \beta} \Pi) \mathcal{L}_{\Pi^\sharp\alpha} \gamma - (\iota_{d_A \beta} \Pi) (\iota_{d_A \alpha} \Pi) \gamma \\
=& \mathcal{L}_{\Pi^{\sharp} [ \alpha,  \beta ] } \gamma + (-1)^n (\iota_{d_A \beta} \Pi) \mathcal{L}_{\Pi^{\sharp} \alpha} \gamma +
(-1)^n \rho \Pi^{\sharp} (\alpha) \cdot (\iota_{d_A \beta} \Pi) \gamma + (-1)^n (\iota_{d_A \alpha} \Pi) \mathcal{L}_{\Pi^{\sharp} \beta} \gamma \\
& - (-1)^n (\iota_{d_A \alpha} \Pi) \mathcal{L}_{\Pi^{\sharp} \beta} \gamma - (-1)^n \rho \Pi^{\sharp} (\beta) \cdot (\iota_{d_A \alpha} \Pi) \gamma - (-1)^n (\iota_{d_A \beta} \Pi) \mathcal{L}_{\Pi^{\sharp} \alpha} \gamma \\
=& \mathcal{L}_{\Pi^{\sharp} [ \alpha,  \beta ] } \gamma + (-1)^n \bigg[  \rho \Pi^{\sharp} (\alpha) \cdot (\iota_{d_A \beta} \Pi) - \rho \Pi^{\sharp} (\beta) \cdot (\iota_{d_A \alpha} \Pi)  \bigg] \gamma\\
=& \mathcal{L}_{\Pi^{\sharp} [ \alpha,  \beta ] } \gamma + (-1)^n (\iota_{d_A [ \alpha,  \beta ]} \Pi) \gamma  \hspace*{1cm} (\text{using}~~ {\bf Step ~~2}) \\
=& [ [ \alpha,  \beta ], \gamma ].
\end{align*}
\end{proof}

\begin{remark}\label{leibniz-algb-associted-base}
Let $(A, \Pi)$ be a Lie algebroid over $M$ with a Nambu structure of order $n$. Note that the Nambu-Poisson tensor  $\Lambda \in \Gamma(\wedge^nTM)$ corresponding to the bracket $\{~, \ldots, ~\}_\Pi$ as obtained in Proposition \ref{induced-NP-structure-base-prop} is given by
\begin{align*}
\Lambda (df_1, \ldots, df_{n}) & =  \{f_1, \ldots, f_n\}_{\Pi} = \Pi (d_A f_1, \ldots, d_A f_n)\\
& = \Pi (\rho^*df_1, \ldots, \rho^*df_n)= (\wedge^n\rho)(\Pi)(df_1, \ldots ,df_n).
\end{align*}
Thus,  $\Lambda = \wedge^n \rho (\Pi).$ Moreover, note that the anchor of the Leibniz algebroid associated to the Lie algebroid $TM$ with the Nambu structure $\Lambda$ (cf. Proposition \ref{leib-algbd}) is given by
$$\Lambda^\sharp = (\rho \circ \Pi^\sharp) \circ \wedge^{n-1}\rho^\ast.$$
\end{remark}

\begin{remark}\label{leibniz algebra-comparison}
\begin{enumerate}
\item If $(M, \Lambda)$ is a Nambu-Poisson manifold of order $n,$ then the Leibniz algebroid structure on $\wedge^{n-1}T^\ast M$ associated to the tangent Lie algebroid with the Nambu structure $\Lambda$ (cf. Example \ref{lie-nam-exam}(1)) as obtained above reduces to that obtained in \cite{ibanez}. 

\item Observe that having defined the notion of a Nambu structure on a Lie algebroid, the author \cite{wade} showed that there is a Leibniz algebroid structure on the vector bundle $\wedge^{n-1}A^\ast$ associated to any Nambu structure $\Pi$ of order $n$ on a Lie algebroid $A,$ where the Leibniz algebra bracket on $\Gamma(\wedge^{n-1}A^\ast)$ and the anchor are given, respectively, by 
$$[\alpha, \beta] = \mathcal L_{\Pi^\sharp\alpha}\beta - \iota _{\Pi^\sharp\beta}d_A\alpha~~~~\mbox{and}~~~~\rho\circ \Pi^\sharp,$$ for all $\alpha,~\beta \in \Gamma(\wedge^{n-1}A^\ast).$ (Note that this bracket differs from the bracket (\ref{leibniz-bracket}) only in the second term.)
However, in general, if $\Pi$ is a Nambu structure on a Lie algebroid $A$ both in the sense of the present paper and that of \cite{wade}, then the associated Leibniz algebroid structures on $\wedge^{n-1}A^\ast$ may be different. For instance, consider the following case. Let $M = \mathbb R^{n+k}, ~k\geq 1$ with the Nambu structure  $\Pi$ of order $n$ given by
$$\Pi = \frac{\partial}{\partial x_1}\wedge \cdots \wedge\frac{\partial}{\partial x_n}.$$  Let $\alpha = x_1 dx_2 \wedge \cdots \wedge dx_n$  and $\beta = dx_3\wedge \cdots \wedge dx_{n+1}.$ We show that the two brackets of $\alpha$ and $\beta$ are different. For this it is enough to show that they differ in the second term. For these $\alpha$ and $\beta,$ the second term in our case is $(-1)^n(\iota_{d\alpha}\Pi) \beta = (-1)^n dx_3\wedge \cdots \wedge dx_{n+1}\neq 0,$ whereas, the second term according to the above definition of \cite{wade} is $-\iota_{\Pi^\sharp\beta}d\alpha = 0.$
\end{enumerate}  
\end{remark}

\begin{remark}\label{loday-algebroid}
It may be remarked that in \cite{grabowski-poncin}, the authors introduced a notion of Loday algebroid which is more geometric than Leibniz algebroid and appears in many places \cite{jubin-poncin-uchino}. 
Recall that a Loday algebroid is a (left) Leibniz algebroid $(A, [~,~], \rho)$ together with a derivation
$$D : C^\infty(M) \rightarrow \mbox{Hom}_{C^\infty(M)}(\Gamma (A^{\otimes 2}), \Gamma A),$$ such that
$$[fX, Y] = f[X, Y] -(\rho(Y)f)X + D(f)(X, Y), ~~X, Y \in \Gamma A ~~\mbox{and}~~ f \in C^\infty(M).$$
Lie algebroids, Courant algebroids, Leibniz algebroids associated to  Nambu-Poisson manifolds are examples of Loday algebroids. In fact, the Leibniz algebroid $(\wedge^{n-1}A^*, [ ~,~ ], \rho \circ \Pi^{\sharp})$ of Proposition \ref{leib-algbd} associated to a Lie algebroid $A$ with a Nambu structure $\Pi$ turns out to be a Loday algebroid, where the derivation $D$ is given by
$$D(f)(\alpha, \beta) = \rho\circ\Pi^\sharp(\beta)(f)\alpha - \rho\circ\Pi^\sharp(\alpha)(f)\beta + d_Af\wedge \iota_{\Pi^\sharp(\alpha)}\beta ,$$ for all $f\in C^\infty(M)$, $\alpha, \beta \in \Gamma (\wedge^{n-1}A^*),$ $d_A$ being the Lie algebroid differential operator (see Section \ref{2}).  
\end{remark}

\begin{remark}
Let $A$ be an oriented Lie algebroid over $M$ of rank $m$ and $\Pi \in \Gamma(\wedge^mA)$ be any $m$-multisection of $A$. Then by Remark \ref{f-nambu},
$\Pi$ is a Nambu structure on $A$ of order $m$. If $\Pi \equiv 0$, then the Leibniz algebroid structure on $\wedge^{m-1}A^*$ is trivially a Lie algebroid, since the Leibniz bracket
is identically zero in this case. If $\Pi$ is non-vanishing, then the next proposition shows that the induced Leibniz algebroid is also a Lie algebroid.
\end{remark}

\begin{prop}\label{leib-lie}
 Let $A$ be an oriented Lie algebroid over $M$ of rank $m$ and $\Pi$ be a non-vanishing $m$-multisection of $A$. Then the Leibniz algebroid structure
on $\wedge^{m-1}A^*$ is a Lie algebroid.
\end{prop}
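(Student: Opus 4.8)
The plan is to verify that the Leibniz bracket (\ref{leibniz-bracket}), which in the maximal case $n=m$ reads $[\alpha,\beta]=\mathcal{L}_{\Pi^{\sharp}\alpha}\beta+(-1)^m(\iota_{d_A\alpha}\Pi)\beta$, is skew-symmetric on $\Gamma(\wedge^{m-1}A^\ast)$. By Proposition \ref{leib-algbd} the triple $(\wedge^{m-1}A^\ast,[~,~],\rho\circ\Pi^{\sharp})$ is already a (left) Leibniz algebroid, and a left Leibniz algebra whose bracket is skew-symmetric is automatically a Lie algebra (skew-symmetry turns the left Leibniz identity into the Jacobi identity). So skew-symmetry is all that remains to be proved.

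The key preliminary step is to observe that $\Pi^{\sharp}:\wedge^{m-1}A^\ast\to A$ is a vector bundle isomorphism; this is where non-vanishing of $\Pi$ enters. Both bundles have rank $m$, and on a trivializing open set $U$ with a local frame $\{X_1,\dots,X_m\}$ of $A$ for which $\Pi|_U=h\,X_1\wedge\cdots\wedge X_m$ with $h$ nowhere zero, a direct computation gives $\Pi^{\sharp}(X_1^\ast\wedge\cdots\wedge\widehat{X_k^\ast}\wedge\cdots\wedge X_m^\ast)=\pm\,h\,X_k$, so $\Pi^{\sharp}$ sends a local frame of $\wedge^{m-1}A^\ast$ to a local frame of $A$. (Equivalently, writing $\Pi=\Pi_f$ as in Remark \ref{f-nambu} with $f$ nowhere zero, $\Pi^{\sharp}=f\,\Pi_\mu^{\sharp}$ is, up to sign, $f$ times $*_\mu^{-1}$.) Hence $\Pi^{\sharp}$ induces a $C^\infty(M)$-linear bijection $\Gamma(\wedge^{m-1}A^\ast)\to\Gamma A$.

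I would then invoke Step 1 of the proof of Proposition \ref{leib-algbd}, namely $\Pi^{\sharp}[\alpha,\beta]=[\Pi^{\sharp}\alpha,\Pi^{\sharp}\beta]$ and $\rho\,\Pi^{\sharp}[\alpha,\beta]=[\rho\,\Pi^{\sharp}\alpha,\rho\,\Pi^{\sharp}\beta]$: these say precisely that $\Pi^{\sharp}$ intertwines the Leibniz bracket and the anchor $\rho\circ\Pi^{\sharp}$ on $\wedge^{m-1}A^\ast$ with the Lie bracket and anchor of $A$. Skew-symmetry of the bracket on $\Gamma A$ then yields $\Pi^{\sharp}([\alpha,\beta]+[\beta,\alpha])=[\Pi^{\sharp}\alpha,\Pi^{\sharp}\beta]+[\Pi^{\sharp}\beta,\Pi^{\sharp}\alpha]=0$, and injectivity of $\Pi^{\sharp}$ forces $[\alpha,\beta]=-[\beta,\alpha]$. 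Together with the derivation property and the anchor compatibility $\rho\Pi^{\sharp}[\alpha,\beta]=[\rho\Pi^{\sharp}\alpha,\rho\Pi^{\sharp}\beta]$ already established in Proposition \ref{leib-algbd}, this shows that $(\wedge^{m-1}A^\ast,[~,~],\rho\circ\Pi^{\sharp})$ is a Lie algebroid — in fact the pullback of the Lie algebroid $A$ along the isomorphism $\Pi^{\sharp}$.

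The only genuinely non-formal point is the isomorphism claim for $\Pi^{\sharp}$; the rest is a transport-of-structure argument that reuses machinery already in place. One could instead prove skew-symmetry of $[~,~]$ directly, expanding $\mathcal{L}_{\Pi^{\sharp}\alpha}\beta+\mathcal{L}_{\Pi^{\sharp}\beta}\alpha$ via Proposition \ref{properties-Lie derivative-contraction} and exploiting maximality of $\Pi$, but the route through $\Pi^{\sharp}$ is shorter and conceptually cleaner.
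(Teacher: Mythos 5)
Your argument is, at its core, the same as the paper's: both rest on Step 1 of Proposition \ref{leib-algbd} (namely $\Pi^{\sharp}[\alpha,\beta]=[\Pi^{\sharp}\alpha,\Pi^{\sharp}\beta]$) together with injectivity of $\Pi^{\sharp}$ to kill the symmetric part $\sigma=[\alpha,\beta]+[\beta,\alpha]$, and your frame computation showing that $\Pi^{\sharp}$ carries a local frame of $\wedge^{m-1}A^{*}$ to a local frame of $A$ wherever $\Pi\neq 0$ is correct, as is the reduction of the Jacobi identity to skew-symmetry via the left Leibniz identity.

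The one substantive point of divergence is the reading of the hypothesis. You take ``non-vanishing'' to mean \emph{nowhere} vanishing, so that $\Pi^{\sharp}$ is a global bundle isomorphism. The paper's own proof shows the authors mean only $\Pi\not\equiv 0$: they set $M'=\{x\in M\mid \Pi(x)\neq 0\}$, run exactly your injectivity argument on the open set $M'$ to get $\sigma|_{M'}=0$, observe that $\sigma$ vanishes on the open set where $\Pi$ is identically zero (there $\Pi^{\sharp}\alpha=0$ and $\iota_{d_A\alpha}\Pi=0$, so the bracket itself vanishes), and dispose of the boundary of $M'$ by continuity. This weaker reading is what the paper's final corollary requires, where $\Pi$ is an arbitrary $3$-vector field on a closed oriented $3$-manifold, possibly with zeros. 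Under that reading your proof has a gap precisely on the zero set of $\Pi$, where $\Pi^{\sharp}$ fails to be injective and nothing you have written constrains $\sigma$; the fix is the two extra observations above (vanishing of the bracket on the interior of the zero set, continuity on its boundary). If the intended hypothesis really were ``nowhere vanishing''---a reading the paper's own terminology elsewhere would support---your proof would be complete and essentially identical to the paper's restricted to $M'=M$.
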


\begin{proof}
 Let $M' = \{ x \in M ~| ~~\Pi (x) \neq 0 \}.$ Then $M'$ is an open subset of $M$. Consider the restriction of the Lie algebroid $A \rightarrow M$
to the open subset $M' \subseteq M$, which we denote by $A_{M'} \rightarrow M'$. Then $\Pi$ induces a Nambu structure $\Pi_{M'}$ of order $m$ on the Lie algebroid $A_{M'}$
which is nowhere vanishing. Therefore $\Pi_{M'}$ is given by a volume element in $\Gamma(\wedge^m A^*_{M'})$, and the induced homomorphism
\begin{align*}
 \Pi^{\sharp}_{M'} : \Gamma(\wedge^{m-1}A^*_{M'}) \rightarrow \Gamma(A_{M'})
\end{align*}
is an isomorphism.

To prove that the Leibniz algebroid structure on $\wedge^{m-1}A^*$ is a Lie algebroid, let $\alpha, \beta \in \Gamma(\wedge^{m-1}A^*)$ and consider
$\sigma  = [\alpha, \beta] + [\beta, \alpha]$. Now,
\begin{align*}
 \Pi^{\sharp}_{M'} (\sigma|_{M'}) = \Pi^{\sharp}(\sigma) |_{M'} = ([\Pi^{\sharp} \alpha, \Pi^{\sharp} \beta] + [\Pi^{\sharp} \beta, \Pi^{\sharp} \alpha])|_{M'} = 0.
\end{align*}
Hence  $\sigma|_{M'} = 0$. By continuity, $\sigma$ is zero on the boundary of $M'$. It is easy to check that, $\sigma$ is zero outside the closure of $M'$. Thus $\sigma \equiv 0$ and hence the bracket is skew-symmetric.
\end{proof}

\begin{corollary}
 Let $A$ be an oriented Lie algebroid of rank $m$, $m \geq 3$, and $\mu \in \Gamma(\wedge^mA^*)$ be a nowhere vanishing section. Then the Leibniz algebroid associated to the
Nambu tensor $\Pi_\mu$ (cf. Proposition \ref{volume-nambu}) is a Lie algebroid.
\end{corollary}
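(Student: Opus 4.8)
The plan is to deduce this immediately from Proposition \ref{volume-nambu} together with Proposition \ref{leib-lie}. First I would recall from the proof of Proposition \ref{volume-nambu} that the $m$-multisection $\Pi_\mu$ is constructed so that the evaluation function $\langle \Pi_\mu, \mu\rangle$ is identically $1$ on $M$; in particular $\Pi_\mu$ is a \emph{nowhere vanishing} section of $\wedge^m A = \wedge^{\text{top}}A$, and Proposition \ref{volume-nambu} already establishes that it is a (maximal) Nambu structure of order $m$ on $A$. Thus the hypotheses of Proposition \ref{leib-lie} are met: $A$ is an oriented Lie algebroid of rank $m$ and $\Pi_\mu$ is a non-vanishing $m$-multisection.

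Next I would simply invoke Proposition \ref{leib-lie} verbatim for $\Pi = \Pi_\mu$: it asserts that the Leibniz algebroid $(\wedge^{m-1}A^*, [~,~], \rho\circ\Pi_\mu^\sharp)$ produced by Proposition \ref{leib-algbd} has skew-symmetric bracket, hence is a Lie algebroid. If one wants to spell the argument out, the point is that by Step 1 of Proposition \ref{leib-algbd} the induced map $\Pi_\mu^\sharp$ intertwines the Leibniz bracket on $\Gamma(\wedge^{m-1}A^*)$ with the Schouten bracket on $\Gamma A$, so that for $\alpha,\beta\in\Gamma(\wedge^{m-1}A^*)$ one has $\Pi_\mu^\sharp([\alpha,\beta]+[\beta,\alpha]) = [\Pi_\mu^\sharp\alpha,\Pi_\mu^\sharp\beta]+[\Pi_\mu^\sharp\beta,\Pi_\mu^\sharp\alpha]=0$; since $\Pi_\mu$ is nowhere vanishing of top degree, $\Pi_\mu^\sharp\colon \wedge^{m-1}A^*\to A$ is a vector bundle isomorphism, forcing $[\alpha,\beta]+[\beta,\alpha]=0$.

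There is essentially no obstacle: the corollary is a direct specialisation of Proposition \ref{leib-lie}, the only item worth recording being that the Nambu tensor $\Pi_\mu$ of Proposition \ref{volume-nambu} is genuinely nowhere vanishing, which is exactly what the identity $\langle\Pi_\mu,\mu\rangle\equiv 1$ provides. Equivalently, one may note that $\Pi_\mu$ is a maximal Nambu structure by construction (its order $m$ equals $\operatorname{rank}A$), and apply Proposition \ref{leib-lie} in the maximal case.
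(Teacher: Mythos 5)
Your proposal is correct and matches the paper's intent exactly: the paper states this corollary without proof as an immediate consequence of Proposition \ref{leib-lie}, applied to the nowhere vanishing top-degree multisection $\Pi_\mu$ furnished by Proposition \ref{volume-nambu}. Your additional remark that $\langle\Pi_\mu,\mu\rangle\equiv 1$ guarantees non-vanishing is precisely the observation needed to verify the hypothesis.
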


\begin{prop}
 Let $(A, [~,~], \rho)$ be a Lie algebroid over $M$ and $\Pi \in \Gamma(\wedge^n A)$ be a Nambu structure of order $n$ $(n \geq 3)$. Then the map
$\wedge^{n-1} \rho^\ast : \wedge^{n-1}T^\ast M \rightarrow \wedge^{n-1} A^\ast $ induced by the anchor defines a morphism of Leibniz algebroids where $M$ is considered as a Nambu-Poisson manifold with the Nambu tensor $\Lambda = \wedge^n \rho  (\Pi)$.
\end{prop}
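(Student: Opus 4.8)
The plan is to verify directly the two defining conditions of a morphism of Leibniz algebroids over a fixed base (the definition being the obvious analogue of the Lie algebroid case, cf. the remark following Definition \ref{Def-leib-algbd}): that the bundle map $\varphi := \wedge^{n-1}\rho^\ast : \wedge^{n-1}T^\ast M \to \wedge^{n-1}A^\ast$ (over $\mathrm{id}_M$) intertwines the anchors and the brackets of sections. It is convenient first to pin down the two structures. The Leibniz algebroid structure on $\wedge^{n-1}T^\ast M$ attached to the Nambu-Poisson manifold $(M,\Lambda)$ with $\Lambda=\wedge^n\rho(\Pi)$ is, by Remark \ref{leibniz algebra-comparison}(1), exactly the one produced by Proposition \ref{leib-algbd} applied to the tangent Lie algebroid $TM$ with Nambu structure $\Lambda$; so its anchor is $\Lambda^\sharp$ and its bracket is $[\alpha,\beta]_\Lambda=\mathcal{L}_{\Lambda^\sharp\alpha}\beta+(-1)^n(\iota_{d\alpha}\Lambda)\beta$. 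On $\wedge^{n-1}A^\ast$ we use the structure of Proposition \ref{leib-algbd}, with anchor $\rho\circ\Pi^\sharp$ and bracket (\ref{leibniz-bracket}).

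Anchor compatibility is already in hand: Remark \ref{leibniz-algb-associted-base} records $(\rho\circ\Pi^\sharp)\circ\wedge^{n-1}\rho^\ast=\Lambda^\sharp$, which is precisely the statement that the target anchor composed with $\varphi$ equals the source anchor, so nothing new is needed there.

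The substance is bracket compatibility,
$$\wedge^{n-1}\rho^\ast\big([\alpha,\beta]_\Lambda\big)=\big[\,\wedge^{n-1}\rho^\ast\alpha,\ \wedge^{n-1}\rho^\ast\beta\,\big],\qquad \alpha,\beta\in\Gamma(\wedge^{n-1}T^\ast M),$$
which I would prove term by term, using throughout that the anchor induces a morphism of differential graded algebras, $d_A\circ\rho^\ast=\rho^\ast\circ d$ on $\Gamma(\wedge^\bullet T^\ast M)$ (the degree-zero case $d_Af=\rho^\ast df$ being already used in the excerpt), together with the evident commutation $\iota_Y\circ\rho^\ast=\rho^\ast\circ\iota_{\rho(Y)}$ for $Y\in\Gamma A$, immediate from the definitions of contraction and pullback of forms. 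For the second term, write $d_A(\wedge^{n-1}\rho^\ast\alpha)=\wedge^n\rho^\ast(d\alpha)$ and compute the scalar coefficient by duality, $\iota_{\wedge^n\rho^\ast(d\alpha)}\Pi=\langle\Pi,\wedge^n\rho^\ast(d\alpha)\rangle=\langle\wedge^n\rho(\Pi),d\alpha\rangle=\langle\Lambda,d\alpha\rangle=\iota_{d\alpha}\Lambda$; since $\wedge^{n-1}\rho^\ast$ is $C^\infty(M)$-linear this identifies $(-1)^n(\iota_{d\alpha}\Lambda)\beta$, pushed forward, with $(-1)^n(\iota_{d_A(\wedge^{n-1}\rho^\ast\alpha)}\Pi)(\wedge^{n-1}\rho^\ast\beta)$. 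For the first term, put $Y=\Pi^\sharp(\wedge^{n-1}\rho^\ast\alpha)\in\Gamma A$; by anchor compatibility $\rho(Y)=\Lambda^\sharp\alpha$, and then Cartan's formula $\mathcal{L}=\iota\, d+d\,\iota$ on both algebroids (Proposition \ref{properties-Lie derivative-contraction}(9)), combined with the two commutation rules above, gives $\mathcal{L}_Y(\wedge^{n-1}\rho^\ast\beta)=\wedge^{n-1}\rho^\ast(\mathcal{L}_{\Lambda^\sharp\alpha}\beta)$. Adding the two contributions yields the required identity, and together with anchor compatibility this shows that $\wedge^{n-1}\rho^\ast$ is a morphism of Leibniz algebroids.

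I do not expect a real obstacle: every ingredient is either already established in the excerpt (Proposition \ref{leib-algbd}, Remark \ref{leibniz-algb-associted-base}, Proposition \ref{properties-Lie derivative-contraction}, Remark \ref{leibniz algebra-comparison}) or is the elementary fact that a Lie algebroid anchor pulls forms back compatibly with $d$ and with contraction. The only point deserving care is bookkeeping the signs and the pairing conventions so that the ``$\iota_{d(\cdot)}(\cdot)$'' scalars and the Lie-derivative terms of the two Leibniz brackets match exactly; once $d_A\rho^\ast=\rho^\ast d$ and $\iota_Y\rho^\ast=\rho^\ast\iota_{\rho(Y)}$ are in place, the rest is routine.
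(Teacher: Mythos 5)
Your proposal is correct and follows essentially the same route as the paper: verify anchor compatibility via $\Lambda^\sharp = (\rho\circ\Pi^\sharp)\circ\wedge^{n-1}\rho^\ast$, then check the two terms of the bracket separately using $\wedge^{n-1}\rho^\ast(\mathcal{L}_{\Lambda^\sharp\alpha}\beta)=\mathcal{L}_{\Pi^\sharp(\wedge^{n-1}\rho^\ast\alpha)}\wedge^{n-1}\rho^\ast\beta$ and $\iota_{d\alpha}\Lambda=\iota_{d_A(\wedge^{n-1}\rho^\ast\alpha)}\Pi$. The only difference is that you supply justifications (Cartan's formula plus $d_A\rho^\ast=\rho^\ast d$ and $\iota_Y\rho^\ast=\rho^\ast\iota_{\rho(Y)}$) for the two identities the paper declares ``straightforward to check.''
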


\begin{proof}
As $\Lambda = \wedge^n \rho (\Pi)$, we have $\Lambda^{\sharp} = \rho \circ \Pi^{\sharp} \circ \wedge^{n-1} \rho^\ast.$ Therefore the given map commutes with the anchor maps.
Let $\alpha, \beta \in \Omega^{n-1}(M).$ Then it is straightforward to check that
\begin{enumerate}
\item $ \wedge^{n-1} \rho^{*} (\mathcal{L}_{\Lambda^{\sharp} \alpha} \beta) = \mathcal{L}_{ \Pi^{\sharp}(\wedge^{n-1} \rho^{*} (\alpha))} {\wedge^{n-1} \rho^{*} (\beta)} $ and
\item $\iota_{d \alpha} \Lambda = \iota_{\wedge^n \rho^{*} (d \alpha)} \Pi = \iota_{d_A (\wedge^{n-1} \rho^{*} (\alpha))} \Pi$.
\end{enumerate}
Therefore,
\begin{align*}
 \wedge^{n-1} \rho^{*} [\alpha, \beta] =& \wedge^{n-1} \rho^{*} \big( \mathcal{L}_{\Lambda^{\sharp} \alpha} \beta + (-1)^n (\iota_{d \alpha} \Lambda) \beta  \big)\\
=&  \mathcal{L}_{ \Pi^{\sharp}(\wedge^{n-1} \rho^\ast (\alpha))} {\wedge^{n-1} \rho^\ast (\beta)} + (-1)^n (\iota_{d \alpha} \Lambda)  \wedge^{n-1} \rho^\ast (\beta) \\
=&  \mathcal{L}_{ \Pi^{\sharp}(\wedge^{n-1} \rho^\ast (\alpha))  } {\wedge^{n-1} \rho^{*} (\beta)} + (-1)^n  (\iota_{d_A (\wedge^{n-1} \rho^{*} (\alpha))} \Pi) \wedge^{n-1} \rho^{*} (\beta) \\
=&  [\wedge^{n-1} \rho^{*} (\alpha), \wedge^{n-1} \rho^\ast (\beta)].
\end{align*}
Thus, $\wedge^{n-1} \rho^{*}$ is a Leibniz algebroid morphism.
\end{proof}

\begin{remark}\label{singular-foliation-lie-algb-nambu-structure}
We have remarked before (cf. Remark \ref{singular-foliation-NP}) that there is a singular foliation associated to any Nambu-Poisson manifold. Moreover, it is known that given a Leibniz algebroid over a smooth manifold $M$, the image of the anchor defines an integrable distribution on $M$ \cite{hagiwara}.  Suppose $(A, \Pi)$ is a Lie algebroid over $M$ with a Nambu structure of order $n$ and let $\Lambda$ be the induced Nambu-Poisson structure on $M$. Then we have Leibniz algebroids $(\wedge^{n-1}A^*, [ ~,~ ], \rho \circ \Pi^{\sharp})$ and $(\wedge^{n-1}T^*M, [ ~,~ ], \Lambda^{\sharp})$ (cf. Proposition \ref{leib-algbd}) associated to the Lie algebroids with Nambu structure $(A, \Pi)$ and $(TM , \Lambda = \wedge^n \rho (\Pi)),$ respectively.  Note that the anchor $\Lambda^\sharp$ is given by (cf. Remark \ref{leibniz-algb-associted-base})
$$\Lambda^\sharp = (\rho \circ \Pi^\sharp) \circ \wedge^{n-1}\rho^\ast.$$ It follows that if $\rho$ is injective then the distribution on $M$ induced from the Leibniz algebroid $\wedge^{n-1}A^*$ coincides with the characteristic distribution on $M$ associated to the Nambu structure $\Lambda.$ In general, Image $(\Lambda^\sharp)_m ~\subseteq ~~\text{Image}~~(\rho \circ\Pi^\sharp)_m,$ for all $m \in M.$
\end{remark} 

\section{modular class}\label{4}
Recall that the modular class  of a Nambu-Poisson manifold $M$ of order $n$ with associated Nambu tensor $\Lambda$ was introduced in \cite{ibanez}. In this section, we introduce the notion of modular class of a Lie algebroid $A$ with a Nambu structure $\Pi$ of order $n>2,$ generalizing the classical case.

Let $(A, \Pi)$ be a Lie algebroid with a Nambu structure $\Pi \in \Gamma(\wedge^nA)$ of order $n$. Then by Proposition \ref{leib-algbd}, the space $\Gamma(\wedge^{n-1}A^*)$ of sections of the bundle $\wedge^{n-1}A^*$ is a Leibniz algebra with bracket $[~,~]$ is given by $[ \alpha, \beta ] = \mathcal{L}_{\Pi^{\sharp} \alpha} \beta + (-1)^n (\iota_{d_A \alpha} \Pi) \beta,$
for all $\alpha, \beta \in \Gamma(\wedge^{n-1}A^*).$ The modular class of $(A, \Pi)$ will be introduced
as an element in the first cohomology group of the Leibniz algebra cohomology of $(\Gamma(\wedge^{n-1}A^*) , [~,~])$ with coefficients in $C^\infty(M)$.
\begin{defn}\label{mod-Def}
Let $(A, \Pi)$ be an oriented Lie algebroid with a Nambu structure of order $n$. Suppose $\mu \in \Gamma(\wedge^\text{top}A^*)$ is a nowhere vanishing element representing the orientation. The
{\it modular tensor field} associated with $\mu$ is denoted by $M^\mu \in \Gamma(\wedge^{n-1}A)$ and is defined by the following relation
\begin{align}\label{equation-defn-modular-field}
 (\iota_{\alpha} M^\mu ) \mu = \mathcal{L}_{\Pi^{\sharp} \alpha} \mu + (-1)^n (\iota_ {d_A \alpha } \Pi) \mu,
\end{align}
for all $\alpha \in \Gamma(\wedge^{n-1}A^*).$
\end{defn}

\begin{remark}\label{modular-tensor}
To motivate the above defining condition of a modular tensor field, let us look at the classical case. Let $M$ be an oriented Nambu-Poisson manifold of order $n$ with associated Nambu tensor $\Lambda.$ Recall that the modular tensor field of $M$ corresponding to a given volume form $\eta$ of $M$ is defined as follows \cite{ibanez}. Consider the mapping 
$$\mathcal M^\eta : C^\infty(M) \times \cdots \times C^\infty(M) \rightarrow C^\infty(M),$$ defined by 
$$\mathcal{L}_{X_{f_1 ...f_{n-1}}} \eta = \mathcal M^\eta (f_1, \ldots, f_{n-1}) \eta, \hspace{0.05cm}f_1, \ldots, f_{n-1} \in C^\infty(M).$$ This is skew-symmetric and satisfies derivation property in each argument with respect to product of functions and hence defines an $(n-1)$-vector field on $M$ which is by definition the modular tensor field $M^\eta$ on $M$. Note that we may view $TM$ as a Lie algebroid equipped with the Nambu structure $\Lambda$ (see Example \ref{lie-nam-exam} (1)). It is then natural to expect that our definition of modular tensor field of $(TM, \Lambda)$ should be the same as the one defined in \cite{ibanez} for the Nambu-Poisson manifold $M$. It is indeed the case. 

To see this, observe from Definition \ref{mod-Def} that the modular tensor field $M^\eta \in \Gamma(\wedge^{n-1}TM)$ associated with a given volume form $\eta$ is given by
\begin{align*}
 (\iota_{\alpha} M^\eta ) \eta = \mathcal{L}_{\Lambda^{\sharp} \alpha} \eta + (-1)^n (\iota_ {d \alpha } \Lambda) \eta,
\end{align*}
for all $\alpha \in \Omega^{n-1}(M)$. Let $f_1, \ldots, f_{n-1} \in C^\infty(M)$ and take $\alpha = df_1 \wedge \cdots \wedge df_{n-1}$. Therefore, the modular tensor field
$M^\eta$ satisfies the relation  $\mathcal{L}_{\Lambda^{\sharp}(df_1 \wedge \cdots \wedge df_{n-1})} \eta = (\iota_{df_1 \wedge \cdots \wedge df_{n-1}} M^\eta) \eta.$
In other words, $\mathcal{L}_{X_{f_1 ...f_{n-1}}} \eta = M^\eta (f_1, \ldots, f_{n-1}) \eta.$
Thus for a Nambu-Poisson manifold $(M, \Lambda)$ viewed as a Lie algebroid $(TM, \Lambda)$ with Nambu structure, the modular tensor field as defined in Definition
\ref{mod-Def} coincides with that of \cite{ibanez} for $M$.

Conversely, the modular tensor field of a Nambu-Poisson manifold $(M, \Lambda)$ as introduced in \cite{ibanez} satisfies (\ref{equation-defn-modular-field}). To prove this assertion it is enough to prove it for $\alpha \in \Gamma (\wedge^{n-1}T^\ast M)$ of the form $\alpha = gdf_1\wedge \cdots \wedge df_{n-1},$ where $g, f_1, \ldots ,f_{n-1} \in C^\infty(M).$

By Remark \ref{properties-top-degree} (3), we obtain
\begin{align*}
\mathcal L_{\Lambda^\sharp(\alpha)}\eta & = \mathcal L_{g\Lambda^\sharp (df_1\wedge \cdots \wedge df_{n-1})}\eta = g\mathcal L_{\Lambda^\sharp (df_1\wedge \cdots df_{n-1})}\eta + (\Lambda^\sharp( df_1\wedge \cdots \wedge df_{n-1})(g))\eta\\
& = g(\iota_{df_1\wedge \cdots \wedge df_{n-1}}M^\eta)\eta + (\iota_{df_1\wedge \cdots \wedge df_{n-1}\wedge dg}\Lambda)\eta = (\iota_\alpha M^\eta)\eta - (-1)^n(\iota_{d\alpha}\Lambda)\eta.
\end{align*}
Thus, Equation (\ref{equation-defn-modular-field}) is satisfied.
\end{remark}

Next we study some properties of the modular tensor field $M^\mu \in \Gamma(\wedge^{n-1}A)$ associated with $\mu \in \Gamma(\wedge^\text{top}A^*)$ of an orientable Lie algebroid equipped with a Nambu structure of order $n$.

\begin{prop}\label{modular-tensor-divergence}
The modular tensor field associated with $\mu$ is the divergence of $\Pi$ with respect to $\mu$. Explicitly,
$ M^\mu = \partial_\mu (\Pi),$ where $\partial_\mu$ is the divergence operator introduced in Section \ref{2}.
\end{prop}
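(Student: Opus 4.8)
The plan is to show that the defining relation \eqref{equation-defn-modular-field} for $M^\mu$ is exactly the relation characterizing $\partial_\mu(\Pi)$, by invoking Proposition \ref{formula} with $P = \Pi \in \Gamma(\wedge^n A)$ and $k = n$. First I would recall that, by definition of the contraction operators and the isomorphism $*_\mu$, for any $\alpha \in \Gamma(\wedge^{n-1}A^*)$ we have the pairing identities $(\iota_\alpha \partial_\mu(\Pi))\,\mu = *_\mu(\iota_\alpha \partial_\mu(\Pi))$ up to the appropriate degree bookkeeping, but more directly: contracting a top-degree-producing multisection against $\mu$ gives a function. So the assertion $M^\mu = \partial_\mu(\Pi)$ is equivalent to showing
\begin{align*}
(\iota_\alpha \partial_\mu(\Pi))\,\mu = \mathcal{L}_{\Pi^\sharp \alpha}\mu + (-1)^n (\iota_{d_A\alpha}\Pi)\,\mu
\end{align*}
for all $\alpha \in \Gamma(\wedge^{n-1}A^*)$, since $M^\mu$ is uniquely determined by \eqref{equation-defn-modular-field} (as $\mu$ is nowhere vanishing and $\iota_{(\cdot)}M^\mu$ ranges over all functions as $\alpha$ varies, the multisection $M^\mu$ is determined).

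The key step is Proposition \ref{formula}, which with $P = \Pi$ and $k = n$ gives
\begin{align*}
\iota_\alpha \partial_\mu(\Pi) = \mathrm{div}_\mu(\iota_\alpha \Pi) + (-1)^n \iota_{d_A\alpha}\Pi = \mathrm{div}_\mu(\Pi^\sharp \alpha) + (-1)^n \iota_{d_A\alpha}\Pi.
\end{align*}
Multiplying both sides by $\mu$ and using the defining property of the divergence operator, namely $\mathcal{L}_X \mu = (\mathrm{div}_\mu X)\,\mu = \partial_\mu(X)\,\mu$ for $X \in \Gamma A$ (established just before the Lemma in Section \ref{2}), applied to $X = \Pi^\sharp\alpha$, I obtain
\begin{align*}
(\iota_\alpha \partial_\mu(\Pi))\,\mu = \mathcal{L}_{\Pi^\sharp\alpha}\mu + (-1)^n (\iota_{d_A\alpha}\Pi)\,\mu,
\end{align*}
which is precisely \eqref{equation-defn-modular-field}. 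Hence $\iota_\alpha M^\mu = \iota_\alpha \partial_\mu(\Pi)$ for every $\alpha \in \Gamma(\wedge^{n-1}A^*)$, and since a section of $\wedge^{n-1}A$ is determined by its contractions against all $(n-1)$-forms (equivalently, $*_\mu$ is an isomorphism), we conclude $M^\mu = \partial_\mu(\Pi)$.

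I do not expect any serious obstacle here: the statement is essentially a repackaging of Proposition \ref{formula} in the special case $P = \Pi$, $k = n$, combined with the characterization of $\partial_\mu$ as divergence. The only point requiring a line of care is the uniqueness argument — that \eqref{equation-defn-modular-field} genuinely pins down $M^\mu$ — which follows since $\mu$ is nowhere vanishing and $\iota_\bullet M^\mu$ determines $M^\mu$ pointwise. One should also note that the identity $\iota_{\Pi^\sharp\alpha}\mu$ versus the pairing $\langle \cdot, \mu\rangle$ conventions are consistent with the definitions in Section \ref{2}; this is routine.
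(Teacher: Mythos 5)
Your argument is correct and is exactly the paper's proof: apply Proposition \ref{formula} with $P=\Pi$, $k=n$, multiply by $\mu$ using $\mathcal{L}_{\Pi^\sharp\alpha}\mu=(\mathrm{div}_\mu\,\Pi^\sharp\alpha)\mu$, and match against the defining relation (\ref{equation-defn-modular-field}). The added remark on uniqueness of $M^\mu$ is a harmless elaboration of what the paper leaves implicit.
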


\begin{proof}
From Proposition \ref{formula} we have $ \iota_{\alpha} \partial_\mu (\Pi) = \text{div}_\mu (\Pi^{\sharp} \alpha) + (-1)^n \iota_{d_A \alpha} \Pi,$
for all $\alpha \in \Gamma(\wedge^{n-1}A^*)$, which implies that
$ (\iota_{\alpha} \partial_\mu (\Pi)) \mu = \mathcal{L}_{\Pi^{\sharp} \alpha} \mu + (-1)^n (\iota_{d_A \alpha} \Pi) \mu.$ Hence the result.
\end{proof}

\begin{corollary}
The modular tensor field associated with $\mu$ satisfies
$ \partial_\mu (M^\mu) = 0.$
\end{corollary}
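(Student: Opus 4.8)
The plan is to combine the previous proposition, which identifies $M^\mu = \partial_\mu(\Pi)$, with the fact that $\partial_\mu$ is a differential, i.e.\ $\partial_\mu^2 = 0$. Recall from Section \ref{2} that the operator $\partial_\mu = {*_\mu}^{-1}\circ d_A\circ {*_\mu}$ satisfies $\partial_\mu^2 = 0$ because $d_A^2 = 0$ and $*_\mu$ is a vector bundle isomorphism. Since Proposition \ref{modular-tensor-divergence} gives $M^\mu = \partial_\mu(\Pi)$, we immediately obtain
\begin{align*}
\partial_\mu(M^\mu) = \partial_\mu(\partial_\mu(\Pi)) = \partial_\mu^2(\Pi) = 0.
\end{align*}
That is the whole argument; there is essentially no obstacle here, as the corollary is a formal consequence of $\partial_\mu$ being a square-zero operator together with the identification already established.

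If one wished to avoid invoking $\partial_\mu^2 = 0$ as a black box, the alternative route is to unwind the definition: $\partial_\mu(M^\mu) = {*_\mu}^{-1} d_A *_\mu(\partial_\mu\Pi) = {*_\mu}^{-1}d_A *_\mu({*_\mu}^{-1}d_A *_\mu\Pi) = {*_\mu}^{-1}(d_A d_A)(*_\mu\Pi) = 0$, using that the two inner copies of $*_\mu$ and ${*_\mu}^{-1}$ cancel. Either presentation is a one-line computation, so I would simply state the first version.

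The only point worth a brief remark is the degree bookkeeping: $\Pi \in \Gamma(\wedge^n A)$, so $M^\mu = \partial_\mu(\Pi) \in \Gamma(\wedge^{n-1}A)$, consistent with Definition \ref{mod-Def}, and $\partial_\mu(M^\mu) \in \Gamma(\wedge^{n-2}A)$ (which is the zero section). No nonnegativity or rank hypothesis beyond $n \geq 3$ is needed, since all the relevant bundles $\wedge^{k}A$ are defined and $\partial_\mu$ maps between them as stated in \eqref{partial}. So the proof is: apply Proposition \ref{modular-tensor-divergence}, then use ${\partial_\mu}^2 = 0$.
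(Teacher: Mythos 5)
Your proof is correct and is precisely the argument the paper intends: the corollary follows immediately by combining Proposition \ref{modular-tensor-divergence} ($M^\mu = \partial_\mu(\Pi)$) with the identity $\partial_\mu^2 = 0$ noted in Section \ref{2}. The degree bookkeeping remark is accurate but not needed; nothing further is required.
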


\begin{corollary}
 The modular tensor field associated to $\mu$ satisfies
$$ \iota_{M^\mu} \mu = d_A (\iota_\Pi \mu), \hspace*{0.5cm} \iota_{M^\mu} \mu = \partial_\Pi \mu,$$
 where $\partial_\Pi = [d_A, \iota_\Pi]$.
\end{corollary}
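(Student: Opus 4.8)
The plan is to read both identities directly off Proposition~\ref{modular-tensor-divergence}, which already identifies the modular tensor field as a divergence, $M^\mu = \partial_\mu(\Pi)$, together with the definitions of the star operator $*_\mu$ and of $\partial_\mu$ recalled in Section~\ref{2}.

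First I would establish $\iota_{M^\mu}\mu = d_A(\iota_\Pi\mu)$. Recall that $*_\mu(P) = \iota_P\mu$ for every $P \in \Gamma(\wedge^\bullet A)$ and that $\partial_\mu = {*_\mu}^{-1}\circ d_A\circ {*_\mu}$ by $(\ref{partial})$. Applying $*_\mu$ to the equality $M^\mu = \partial_\mu(\Pi)$ of Proposition~\ref{modular-tensor-divergence} and using these two facts,
\[
\iota_{M^\mu}\mu = *_\mu(M^\mu) = *_\mu\bigl(\partial_\mu \Pi\bigr) = d_A\bigl(*_\mu \Pi\bigr) = d_A(\iota_\Pi\mu),
\]
which is the first assertion; in words, $*_\mu$ conjugates $\partial_\mu$ on $\Gamma(\wedge^\bullet A)$ into $d_A$ on $\Gamma(\wedge^\bullet A^*)$ by its very construction.

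For the second assertion I would expand the graded commutator $\partial_\Pi = [d_A,\iota_\Pi] = d_A\circ\iota_\Pi - (-1)^n\iota_\Pi\circ d_A$ (with the degree and sign conventions of Definition~\ref{Lie derivative-contraction}, since $\iota_\Pi$ lowers degree by $n$ and $d_A$ raises it by $1$) and evaluate it on $\mu$. As $\mu \in \Gamma(\wedge^{\text{top}}A^*)$ is a top-degree form, $d_A\mu = 0$, so the second term vanishes and $\partial_\Pi\mu = d_A(\iota_\Pi\mu)$. Comparing with the identity just proved gives $\iota_{M^\mu}\mu = \partial_\Pi\mu$, completing the argument.

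There is no genuine obstacle here: essentially all the content sits in Proposition~\ref{modular-tensor-divergence}, and what remains is the structural observation that $*_\mu$ intertwines $\partial_\mu$ with $d_A$, together with the triviality $d_A\mu = 0$. The only place calling for a moment's care is the sign in the graded commutator defining $\partial_\Pi$, and that sign turns out to be irrelevant once one restricts to the top-degree form $\mu$.
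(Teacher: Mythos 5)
Your argument is correct and is essentially the paper's own proof: both read $M^\mu=\partial_\mu(\Pi)$ from Proposition \ref{modular-tensor-divergence}, apply $*_\mu$ to get $\iota_{M^\mu}\mu=d_A(\iota_\Pi\mu)$, and then identify this with $[d_A,\iota_\Pi]\mu$ using $d_A\mu=0$ in top degree. Your extra remark that the sign in the graded commutator is immaterial because $d_A\mu=0$ is a harmless elaboration of what the paper leaves implicit.
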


\begin{proof}
Observe that $ M^\mu = \partial_\mu (\Pi) = {*_\mu}^{-1} \circ d_A \circ *_\mu (\Pi) = {*_\mu}^{-1} (d_A \iota_\Pi \mu).$
As a consequence, we get $\iota_{M^\mu} \mu = *_\mu (M^\mu) = d_A (\iota_\Pi \mu) = [d_A, \iota_\Pi] \mu.$
\end{proof}
Thus, the modular tensor field vanishes if and only if $\iota_\Pi \mu$ is $d_A$-closed.

\begin{prop}
 The modular tensor field $M^\mu$ associated with $\mu$ satisfies
$ \mathcal{L}_{M^\mu} \mu = 0,$
where $\mathcal{L}_P = [\iota_P, d_A]$ is the generalized Lie derivative with respect to $P \in \Gamma(\wedge^{\bullet}A).$
\end{prop}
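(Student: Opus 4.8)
The plan is to use the characterization of the modular tensor field established just above, namely $M^\mu = \partial_\mu(\Pi)$ (Proposition \ref{modular-tensor-divergence}), together with the formula $\iota_{M^\mu}\mu = d_A(\iota_\Pi\mu)$ from the preceding corollary. Recall that the generalized Lie derivative is $\mathcal{L}_{M^\mu} = [\iota_{M^\mu}, d_A] = \iota_{M^\mu}\circ d_A - (-1)^{n-1} d_A\circ \iota_{M^\mu}$, since $M^\mu \in \Gamma(\wedge^{n-1}A)$. Applying this to the top-degree form $\mu$, the first term $\iota_{M^\mu}(d_A\mu)$ vanishes because $d_A\mu \in \Gamma(\wedge^{m+1}A^*) = 0$ (as $\mu$ is already of top degree $m = \operatorname{rank} A$). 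Hence $\mathcal{L}_{M^\mu}\mu = -(-1)^{n-1} d_A(\iota_{M^\mu}\mu)$.

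The remaining step is to show $d_A(\iota_{M^\mu}\mu) = 0$. By the corollary preceding this proposition we have $\iota_{M^\mu}\mu = d_A(\iota_\Pi\mu)$, so $d_A(\iota_{M^\mu}\mu) = d_A(d_A(\iota_\Pi\mu)) = 0$ since $d_A^2 = 0$. Combining, $\mathcal{L}_{M^\mu}\mu = -(-1)^{n-1}\cdot 0 = 0$, which is the desired conclusion.

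There is essentially no obstacle here: the statement is a direct consequence of $d_A^2 = 0$ and the degree reasons that kill $\iota_{M^\mu}(d_A\mu)$, once one invokes the already-proved identity $\iota_{M^\mu}\mu = d_A(\iota_\Pi\mu)$. The only point requiring minor care is bookkeeping the sign in the definition $\mathcal{L}_P = \iota_P\circ d_A - (-1)^{|P|} d_A\circ\iota_P$ with $|P| = n-1$, but since both surviving expressions vanish identically, the precise sign is immaterial for the final equality. One could alternatively argue directly from $\mathcal{L}_{M^\mu} = \mathcal{L}_{\partial_\mu(\Pi)}$ and the compatibility of $\partial_\mu$ with the $*_\mu$ isomorphism, but the route through the corollary is the shortest.
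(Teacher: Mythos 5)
Your proof is correct and follows essentially the same route as the paper: expand $\mathcal{L}_{M^\mu}=[\iota_{M^\mu},d_A]$, kill the first term because $d_A\mu=0$ for degree reasons, and kill the second via $\iota_{M^\mu}\mu=d_A(\iota_\Pi\mu)$ and $d_A^2=0$. The paper's proof is just a terser version of exactly this argument.
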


\begin{proof}
This follows because
$$\mathcal{L}_{M^\mu} \mu = [\iota_{M^\mu}, d_A] \mu = (\iota_{M^\mu} \circ d_A - (-1)^{n-1} d_A \circ \iota_{M^\mu}) \mu = 0.$$
\end{proof}

Let $(A, \Pi)$ be an oriented Lie algebroid with a Nambu structure of order $n$. Consider the associated Leibniz algebra
$\mathcal{A} = (\Gamma(\wedge^{n-1}A^*) , [~,~])$ with trivial representation on $C^\infty (M)$ given by
$$ \Gamma(\wedge^{n-1}A^*) \times  C^\infty(M) \rightarrow C^\infty(M),~~(\alpha, f) \mapsto (\rho \Pi^{\sharp}(\alpha))f.$$

Let $\mu \in \Gamma(\wedge^\text{top}A^*)$ be a nowhere vanishing element. Then the modular tensor field $M^\mu$ associated with $\mu$ defines a map (also denoted by the same symbol)
$$ M^\mu : \Gamma(\wedge^{n-1}A^*) \rightarrow C^\infty(M),~~\alpha \mapsto \iota_{\alpha} M^\mu,$$
for all $\alpha \in \Gamma(\wedge^{n-1}A^*) $. Then we have the following result.

\begin{prop}
The map $M^\mu : \Gamma(\wedge^{n-1}A^*) \rightarrow C^\infty(M)$ defines a $1$-cocycle in the Leibniz algebra cohomology of $\mathcal{A} = (\Gamma(\wedge^{n-1}A^*) , [~,~])$ with coefficients in $C^\infty(M).$
\end{prop}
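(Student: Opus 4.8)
The plan is to verify directly the $1$-cocycle condition recorded right after Definition \ref{Def-leib-algbd-cohomology}, which in the present situation --- with the anchor $\rho\circ\Pi^\sharp$ of the Leibniz algebroid $(\wedge^{n-1}A^*,[~,~],\rho\circ\Pi^\sharp)$ of Proposition \ref{leib-algbd} playing the role of $\rho$ and $M^\mu(\alpha)=\iota_\alpha M^\mu$ playing the role of $c$ --- reads
$$
\rho\Pi^\sharp(\alpha)\big(\iota_\beta M^\mu\big)-\rho\Pi^\sharp(\beta)\big(\iota_\alpha M^\mu\big)=\iota_{[\alpha,\beta]}M^\mu,\qquad \alpha,\beta\in\Gamma(\wedge^{n-1}A^*).
$$
Since $\mu$ is nowhere vanishing, it is enough to establish the same identity after multiplying both sides by $\mu$, and the natural starting point is the defining relation (\ref{equation-defn-modular-field}) of the modular tensor field applied to $[\alpha,\beta]$, namely
$$
(\iota_{[\alpha,\beta]}M^\mu)\,\mu=\mathcal L_{\Pi^\sharp[\alpha,\beta]}\mu+(-1)^n(\iota_{d_A[\alpha,\beta]}\Pi)\,\mu.
$$
I would then analyse the two terms on the right separately.

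For the first term, Step~1 of the proof of Proposition \ref{leib-algbd} gives $\Pi^\sharp[\alpha,\beta]=[\Pi^\sharp\alpha,\Pi^\sharp\beta]$, and Proposition \ref{properties-Lie derivative-contraction}(7) turns $\mathcal L_{[\Pi^\sharp\alpha,\Pi^\sharp\beta]}\mu$ into $\mathcal L_{\Pi^\sharp\alpha}\mathcal L_{\Pi^\sharp\beta}\mu-\mathcal L_{\Pi^\sharp\beta}\mathcal L_{\Pi^\sharp\alpha}\mu$. Writing $u=\iota_\alpha M^\mu-(-1)^n\iota_{d_A\alpha}\Pi$ and $v=\iota_\beta M^\mu-(-1)^n\iota_{d_A\beta}\Pi$, relation (\ref{equation-defn-modular-field}) says $\mathcal L_{\Pi^\sharp\alpha}\mu=u\mu$ and $\mathcal L_{\Pi^\sharp\beta}\mu=v\mu$; expanding $\mathcal L_{\Pi^\sharp\alpha}(v\mu)=v\,\mathcal L_{\Pi^\sharp\alpha}\mu+\rho\Pi^\sharp(\alpha)(v)\,\mu$ via Remark \ref{properties-top-degree}(4) (and symmetrically), the symmetric scalar products $uv$ cancel and one is left with $\mathcal L_{\Pi^\sharp[\alpha,\beta]}\mu=\big(\rho\Pi^\sharp(\alpha)(v)-\rho\Pi^\sharp(\beta)(u)\big)\mu$. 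For the second term, Step~2 of the proof of Proposition \ref{leib-algbd}, namely $\iota_{d_A[\alpha,\beta]}\Pi=\rho\Pi^\sharp(\alpha)\cdot(\iota_{d_A\beta}\Pi)-\rho\Pi^\sharp(\beta)\cdot(\iota_{d_A\alpha}\Pi)$, identifies $(-1)^n(\iota_{d_A[\alpha,\beta]}\Pi)\mu$ with $\big(\rho\Pi^\sharp(\alpha)((-1)^n\iota_{d_A\beta}\Pi)-\rho\Pi^\sharp(\beta)((-1)^n\iota_{d_A\alpha}\Pi)\big)\mu$. Adding the two contributions, the $\iota_{d_A\beta}\Pi$ terms combine with $v$ into $\iota_\beta M^\mu$ and the $\iota_{d_A\alpha}\Pi$ terms combine with $u$ into $\iota_\alpha M^\mu$, yielding $(\iota_{[\alpha,\beta]}M^\mu)\mu=\big(\rho\Pi^\sharp(\alpha)(\iota_\beta M^\mu)-\rho\Pi^\sharp(\beta)(\iota_\alpha M^\mu)\big)\mu$; dividing by the nowhere vanishing $\mu$ gives the cocycle identity.

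I do not expect a serious obstacle here: the argument is a sign-bookkeeping exercise that simply feeds the defining relation (\ref{equation-defn-modular-field}) into the two auxiliary identities already isolated in Steps~1 and 2 of Proposition \ref{leib-algbd}. The only points requiring a little care are the cancellation of the symmetric products in the expansion of $\mathcal L_{\Pi^\sharp\alpha}\mathcal L_{\Pi^\sharp\beta}\mu-\mathcal L_{\Pi^\sharp\beta}\mathcal L_{\Pi^\sharp\alpha}\mu$, and checking that the factor $(-1)^n$ attached to $\iota_{d_A\alpha}\Pi$ and $\iota_{d_A\beta}\Pi$ matches between the two contributions so that they recombine into $\iota_\alpha M^\mu$ and $\iota_\beta M^\mu$. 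Alternatively, one could shorten the computation by invoking Proposition \ref{modular-tensor-divergence} to replace $M^\mu$ by $\partial_\mu(\Pi)$ and then using Proposition \ref{formula} together with $\partial_\mu^2=0$, but the route through (\ref{equation-defn-modular-field}) keeps everything at the level of top-degree forms and seems the most transparent.
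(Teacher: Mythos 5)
Your proposal is correct and follows essentially the same route as the paper's own proof: apply the defining relation (\ref{equation-defn-modular-field}) to $[\alpha,\beta]$, use Step~1 of Proposition \ref{leib-algbd} together with $\mathcal L_{[X,Y]}=\mathcal L_X\mathcal L_Y-\mathcal L_Y\mathcal L_X$, expand via Remark \ref{properties-top-degree}(4) so the symmetric products cancel, and absorb the remaining $(-1)^n\iota_{d_A(\cdot)}\Pi$ terms using Equation (\ref{eqn2}) from Step~2. The sign bookkeeping you flag works out exactly as you describe, so nothing further is needed.
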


\begin{proof}
From the definition of $M^\mu$, we have
\begin{align*}
(\iota_{[ \alpha, \beta]} M^\mu ) \mu =& \mathcal{L}_{\Pi^{\sharp}  [\alpha, \beta ]} {\mu} + (-1)^n (\iota_{d_A [ \alpha, \beta ]} \Pi) \mu = \mathcal{L}_{[\Pi^{\sharp} \alpha, \Pi^{\sharp} \beta]} \mu +  (-1)^n (\iota_{d_A [ \alpha, \beta ]} \Pi) \mu \\
=& \mathcal{L}_{\Pi^{\sharp} \alpha} \mathcal{L}_{\Pi^{\sharp} \beta} \mu - \mathcal{L}_{\Pi^{\sharp} \beta} \mathcal{L}_{\Pi^{\sharp} \alpha} \mu +  (-1)^n (\iota_{d_A [ \alpha, \beta ]} \Pi) \mu \\
=& \mathcal{L}_{\Pi^{\sharp} \alpha} \big[\iota_\beta M^\mu - (-1)^n (\iota_{d_A \beta} \Pi)\big] \mu
 - \mathcal{L}_{\Pi^{\sharp} \beta} \big[\iota_\alpha M^\mu - (-1)^n (\iota_{d_A \alpha} \Pi)\big] \mu \\
&  + (-1)^n (\iota_{d_A [ \alpha, \beta ]} \Pi) \mu\\
=& \big[\iota_\beta M^\mu - (-1)^n (\iota_{d_A \beta} \Pi)\big] \mathcal{L}_{\Pi^{\sharp} \alpha} \mu + \rho \Pi^{\sharp} (\alpha) (\iota_\beta M^\mu) \mu - (-1)^n \rho \Pi^{\sharp} (\alpha) (\iota_{d_A \beta} \Pi) \mu\\
& - \big[\iota_\alpha M^\mu - (-1)^n (\iota_{d_A \alpha} \Pi)\big] \mathcal{L}_{\Pi^{\sharp} \beta} \mu - \rho \Pi^{\sharp} (\beta) (\iota_\alpha M^\mu) \mu + (-1)^n \rho \Pi^{\sharp} (\beta) (\iota_{d_A \alpha} \Pi) \mu\\
& + (-1)^n (\iota_{d_A [ \alpha, \beta ]} \Pi) \mu \\
=&  \bigg[\rho \Pi^{\sharp} (\alpha) (\iota_\beta M^\mu) - \rho \Pi^{\sharp} (\beta) (\iota_\alpha M^\mu)\bigg] \mu
\end{align*}
(in the last step we have used Equation (\ref{eqn2}) in the proof of Proposition \ref{leib-algbd}).
As $\mu$ is a nowhere vanishing section, we have $ \iota_{ [\alpha, \beta]} M^\mu  =  \rho \Pi^{\sharp} (\alpha) (\iota_\beta M^\mu) - \rho \Pi^{\sharp} (\beta) (\iota_\alpha M^\mu).$

It follows that the map $M^\mu$ as defined above is a $1$-cocycle in the Leibniz algebra cohomology of $\mathcal{A} = (\Gamma(\wedge^{n-1}A^*) , [~,~])$ with coefficients in $C^\infty(M).$
\end{proof}
\begin{prop}
The cohomology class $[M^\mu] \in \mathcal{H}^1_{Leib} (\mathcal{A})$ does not depend on the chosen non-vanishing section $\mu \in \Gamma(\wedge^\text{top}A^*)$.
\end{prop}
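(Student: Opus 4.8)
The plan is to compare the two modular tensor fields by using the description $M^{\mu}=\partial_{\mu}(\Pi)$ from Proposition~\ref{modular-tensor-divergence} together with the divergence formula of Proposition~\ref{formula}, and to show that changing the orientation form alters the associated $1$-cocycle only by the Leibniz coboundary of a $0$-cochain. First I would fix two nowhere vanishing sections $\mu,\mu'\in\Gamma(\wedge^{\text{top}}A^*)$ and write $\mu'=a\mu$ for a nowhere vanishing function $a\in C^\infty(M)$; since $a$ has constant sign on each connected component of $M$, the function $h:=\ln|a|\in C^\infty(M)$ is smooth and satisfies $\rho(X)(h)=\rho(X)(a)/a$ for every $X\in\Gamma A$.

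Next I would record how the divergence operator behaves under rescaling the orientation form. From $\mathcal L_X\mu'=(\text{div}_{\mu'}X)\mu'$ and $\mathcal L_X(a\mu)=a\,\mathcal L_X\mu+\rho(X)(a)\mu=\big(a\,\text{div}_\mu X+\rho(X)(a)\big)\mu$ (Remark~\ref{properties-top-degree}(4)), dividing by the nowhere vanishing section $a\mu=\mu'$ gives
$$\text{div}_{\mu'}(X)=\text{div}_\mu(X)+\rho(X)(h),\qquad X\in\Gamma A.$$
Then comes the key step: for $\alpha\in\Gamma(\wedge^{n-1}A^*)$, Proposition~\ref{formula} applied to $P=\Pi$ together with Proposition~\ref{modular-tensor-divergence} gives $\iota_\alpha M^{\mu}=\text{div}_\mu(\Pi^{\sharp}\alpha)+(-1)^n\iota_{d_A\alpha}\Pi$, and likewise $\iota_\alpha M^{\mu'}=\text{div}_{\mu'}(\Pi^{\sharp}\alpha)+(-1)^n\iota_{d_A\alpha}\Pi$. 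Subtracting these and applying the rescaling identity with $X=\Pi^{\sharp}\alpha$ yields
$$\iota_\alpha M^{\mu'}-\iota_\alpha M^{\mu}=\rho\Pi^{\sharp}(\alpha)(h)$$
for all $\alpha\in\Gamma(\wedge^{n-1}A^*)$.

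Finally I would interpret this equality cohomologically. The trivial representation of the Leibniz algebra $\mathcal A=(\Gamma(\wedge^{n-1}A^*),[~,~])$ on $C^\infty(M)$ is $(\alpha,f)\mapsto\rho\Pi^{\sharp}(\alpha)(f)$, so on a $0$-cochain $h\in C^0(\Gamma(\wedge^{n-1}A^*);C^\infty(M))=C^\infty(M)$ the Leibniz differential of Definition~\ref{Def-leib-algbd-cohomology} reads $(d_{\mathcal A}h)(\alpha)=\rho\Pi^{\sharp}(\alpha)(h)$. Hence $M^{\mu'}-M^{\mu}=d_{\mathcal A}h$ is a $1$-coboundary, and since both $M^{\mu}$ and $M^{\mu'}$ are $1$-cocycles by the previous proposition, we conclude $[M^{\mu'}]=[M^{\mu}]$ in $\mathcal H^1_{Leib}(\mathcal A)$. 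There is no real obstacle in this argument; the only points needing a little care are verifying that $\ln|a|$ is a genuine smooth function on $M$ (handled component-wise, using that $a$ is nowhere zero) and matching the term $\rho\Pi^{\sharp}(\alpha)(h)$ with the coboundary of a $0$-cochain in the complex of Definition~\ref{Def-leib-algbd-cohomology}. One could alternatively argue straight from the defining relation~(\ref{equation-defn-modular-field}) by expanding $\mathcal L_{\Pi^{\sharp}\alpha}(a\mu)$, bypassing Proposition~\ref{formula}; I would use whichever is shorter, but the divergence route above seems cleanest.
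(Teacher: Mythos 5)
Your proof is correct and is essentially the paper's argument: both reduce to showing $M^{\mu'}-M^{\mu}=d_{\mathcal A}(\log f)$, the paper by expanding the defining relation (\ref{equation-defn-modular-field}) for $\mu'=f\mu$ directly and you by the equivalent route through $M^{\mu}=\partial_{\mu}(\Pi)$ and the rescaling law for $\mathrm{div}_{\mu}$. Your use of $\ln|a|$ is in fact slightly more careful than the paper's ``we may assume $f>0$ everywhere,'' which is not justified when $M$ is disconnected or when one insists on keeping both given sections.
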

\begin{proof}
Let $\mu' \in \Gamma(\wedge^\text{top}A^*)$ be another non-vanishing element. Then there exists a nowhere vanishing
function $f \in C^\infty(M)$ such that $\mu' = f \mu.$ We may assume that $f > 0$ everywhere.

Observe that
\begin{align*}
 (\iota_{\alpha} M^{\mu'}) \mu' =& \mathcal{L}_{\Pi^{\sharp}\alpha} {\mu'} + (-1)^n (\iota_{d_A \alpha} \Pi) \mu' = f \mathcal{L}_{\Pi^{\sharp}\alpha} {\mu} + (\rho \Pi^{\sharp}(\alpha))(f) \mu + (-1)^n (\iota_{d_A \alpha} \Pi) f \mu \\
=& f (\iota_{\alpha} M^{\mu}) \mu + (\rho \Pi^{\sharp}(\alpha))(f) \mu. 
\end{align*}
Therefore,
$$ \iota_{\alpha} M^{\mu'} = \iota_{\alpha} M^{\mu} + \frac{1}{f} (\rho \Pi^{\sharp}(\alpha))(f) = \iota_{\alpha} M^{\mu} + \rho \Pi^{\sharp}(\alpha) (\text{log} f) = \iota_{\alpha} M^{\mu} + \iota_{\alpha} d_{\mathcal{A}}(\text{log} f),$$
where $d_{\mathcal{A}}$ is the coboundary of the cochain complex of the Leibniz algebra $\mathcal{A}$ with coefficients in $C^\infty(M)$. Hence we have $ M^{\mu'} = M^{\mu} + d_{\mathcal{A}}(\text{log} f).$
Thus, $M^{\mu}$ and $M^{\mu'}$ represent the same cohomology class in $\mathcal{H}^1_{Leib}(\mathcal{A}).$
\end{proof}

\begin{defn}\label{def-modular-class}
 Let $(A, \Pi)$ be an oriented Lie algebroid with a Nambu structure of order $n$. Then the cohomology class of the modular tensor fields lying in
$\mathcal{H}^1_{Leib} (\mathcal{A})$ is called the {\it modular class} of $(A, \Pi).$ A Lie algebroid with Nambu structure is called {\it unimodular} if its modular class vanishes.
\end{defn}

\begin{remark}
To define modular class of a Lie algebroid equipped with a nambu structure we have assumed that the Lie algebroid is oriented. However, this assumption is not essential as one can use the notion of density to define modular tensor field and modular class of a Lie algebroid with a Nambu structure which is not oriented.
\end{remark}

It is well-known \cite{wein3} that the modular class of Poisson manifold can be interpreted as an obstruction to the existence of a density invariant under the flows of all Hamiltonian vector fields. In the case of the Lie algebroid associated with a foliation with an orientable normal bundle, the modular class is the obstruction to the existence of an invariant transverse measure \cite{yks2}. The following is a result in the same spirit.

\begin{prop}\label{modular-class-obstruction}
Let $(A, \Pi)$ be a Lie algebroid with a Nambu structure $\Pi$ of order $n$. If the modular class of $(A, \Pi)$ is zero, then there exists a density of $A,$ invariant under all Hamiltonian $A$-sections (cf. Definition \ref {hamiltonian-section}). The Converse is true if $\Gamma A^\ast$ is locally generated by elements of the form $d_Af,~~f\in C^\infty(M).$ 
\end{prop}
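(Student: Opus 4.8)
The plan is to reduce the statement to a local computation about densities. Recall that a density of $A$ is a section of the orientation line bundle $|\wedge^{\text{top}}A^*|$; choosing a local trivialization amounts to working with a local nowhere-vanishing section $\mu$ of $\wedge^{\text{top}}A^*$ on each chart, and the transition functions are absolute values of the change-of-volume factors. For a Hamiltonian $A$-section $X = \Pi^\sharp(d_Af_1 \wedge \cdots \wedge d_Af_{n-1})$, invariance of a density under the flow of $X$ is expressed, in a local trivialization by $\mu$, by the condition $\mathcal{L}_X(h\mu) = 0$ for the local positive function $h$ representing the density, i.e. $\rho(X)(h)\mu + h\,\mathcal{L}_X\mu = 0$, i.e. $\rho(X)(\log h) = -\operatorname{div}_\mu X$.

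First I would establish the forward implication. Assume the modular class is zero, so there is a nowhere-vanishing $\mu \in \Gamma(\wedge^{\text{top}}A^*)$ (after passing to the oriented double cover or working with densities directly, so that orientability is not actually needed) with $M^\mu = d_{\mathcal A}(g)$ for some $g \in C^\infty(M)$, by the proposition identifying the modular class. Set $\mu' = e^{g}\mu$, or rather the associated density $|\mu'| = e^{g}|\mu|$. Using the computation in the proof of the previous proposition ($\iota_\alpha M^{\mu'} = \iota_\alpha M^\mu - \iota_\alpha d_{\mathcal A}(\log f)$ when $\mu' = f\mu$, applied with $f = e^{g}$), one gets $M^{\mu'} = 0$, hence $\partial_{\mu'}(\Pi) = 0$ by Proposition \ref{modular-tensor-divergence}. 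Now for any Hamiltonian $A$-section $X = \Pi^\sharp(d_Af_1 \wedge \cdots \wedge d_Af_{n-1})$, I would apply Proposition \ref{formula} with $P = \Pi$ and $\alpha = d_Af_1 \wedge \cdots \wedge d_Af_{n-1}$, which is $d_A$-closed, to obtain $0 = \iota_\alpha M^{\mu'} \mu' = \operatorname{div}_{\mu'}(\Pi^\sharp\alpha)\,\mu' = (\operatorname{div}_{\mu'}X)\,\mu'$, so $\operatorname{div}_{\mu'}X = 0$, i.e. $\mathcal{L}_X \mu' = 0$. Thus the density $|\mu'|$ is invariant under all Hamiltonian $A$-sections.

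For the converse, assume $\Gamma A^*$ is locally generated by elements $d_Af$ and that some density, locally represented by $\mu'$ (nowhere-vanishing), is invariant under all Hamiltonian $A$-sections, so $\operatorname{div}_{\mu'}(\Pi^\sharp(d_Af_1 \wedge \cdots \wedge d_Af_{n-1})) = 0$ for all $f_i$. By Proposition \ref{formula} again, since each $d_Af_1 \wedge \cdots \wedge d_Af_{n-1}$ is $d_A$-closed, this gives $\iota_{d_Af_1 \wedge \cdots \wedge d_Af_{n-1}} M^{\mu'} = 0$. The generation hypothesis means that sections of the form $d_Af_1 \wedge \cdots \wedge d_Af_{n-1}$ locally span $\wedge^{n-1}A^*$, so $\iota_\alpha M^{\mu'} = 0$ for all $\alpha \in \Gamma(\wedge^{n-1}A^*)$, forcing $M^{\mu'} = 0$. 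Hence the modular tensor field associated with $\mu'$ vanishes, and a fortiori its cohomology class — the modular class of $(A,\Pi)$ — is zero. The main subtlety to handle carefully is the passage between the genuinely orientation-independent language of densities and the local/double-cover description in terms of volume sections $\mu$, and checking that the cocycle identity and the change-of-$\mu$ formula globalize correctly; the computations themselves are immediate consequences of Propositions \ref{formula} and \ref{modular-tensor-divergence}.
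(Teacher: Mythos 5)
Your proof is correct and follows essentially the same route as the paper's: rescale $\mu$ so that the modular tensor field vanishes and evaluate on the $d_A$-closed forms $d_Af_1\wedge\cdots\wedge d_Af_{n-1}$ for the forward direction, and use the $C^\infty(M)$-linearity of $\alpha\mapsto\iota_\alpha M^{\mu}$ together with the local generation hypothesis for the converse (routing through Proposition \ref{formula} instead of directly through the defining relation (\ref{equation-defn-modular-field}) is an equivalent reformulation via Proposition \ref{modular-tensor-divergence}). One small slip: the change-of-section formula is $M^{f\mu}=M^{\mu}+d_{\mathcal A}(\log f)$ with a plus sign, so to cancel $M^{\mu}=d_{\mathcal A}(g)$ you should take $\mu'=e^{-g}\mu$ rather than $e^{g}\mu$; your two sign errors cancel and the conclusion $M^{\mu'}=0$ stands.
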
     

\begin{proof}
We prove the result assuming the Lie algebroid $A$ is orientable as a vector bundle. Let $\mu \in \Gamma(\wedge^{\text{top}}A^\ast)$ defines the orientation. Let $M^\mu$ be the modular tensor field associated to $\mu.$ If $A$ is not orientable then one may work with the density bundle $\wedge^{\text{top}}A^\ast\otimes \mathcal O$ instead, where $\mathcal O$ is the orientation bundle of $M$.  Assume that the modular class of $(A, \Pi)$ is zero. Then $M^\mu = d_{\mathcal A}(f)$ for some $f \in C^\infty(M),$ where $d_{\mathcal A}$ is the coboundary of the Leibniz algebra complex of the Leibniz algebra $\mathcal{A} = (\Gamma(\wedge^{n-1}A^*) , [~,~]).$ Set $\mu^\prime = e^{(-f)}\mu.$ Then $\mu^\prime$ is nowhere vanishing and 
$$M^{\mu^\prime} = M^\mu + d_{\mathcal A}(\text{log} (e^{(-f)})) =  M^\mu - d_{\mathcal A}(f) =0.$$
It follows from the defining condition (\ref{equation-defn-modular-field}) of the modular tensor field that
$$\mathcal{L}_{\Pi^{\sharp} \alpha} \mu^\prime + (-1)^n (\iota_ {d_A \alpha } \Pi) \mu^\prime = 0,$$ for all $\alpha \in \Gamma (\wedge^{n-1} A^\ast).$ In particular, for $\alpha = d_Af_1 \wedge \cdots \wedge d_Af_{n-1}, ~~~f_1, \ldots, f_{n-1} \in C^\infty(M),$ we get $\mathcal{L}_{\Pi^{\sharp}(d_Af_1 \wedge \cdots \wedge d_Af_{n-1})} \mu^\prime = 0.$ Thus $\mu^\prime$ is invariant under all Hamiltonian $A$-sections.

Conversely, assume that $\Gamma A^\ast$ is locally generated by elements of the form $d_Af,~~f\in C^\infty(M).$ Suppose there exists an invariant density $\mu$ of the Lie algebroid $A$.\\
Let $f_1, \ldots, f_{n-1} \in C^\infty(M).$ Then from (\ref{equation-defn-modular-field}) we see that for $\alpha = d_Af_1 \wedge \cdots \wedge d_Af_{n-1},$
$$\iota_{d_Af_1 \wedge \cdots \wedge d_Af_{n-1}}M^\mu = 0.$$ 
The assumption then implies that $\iota_{\alpha}M^\mu = 0,$ for all $\alpha \in \Gamma(\wedge^{n-1}A^\ast).$ It follows that the modular tensor field $M^\mu = 0$ and hence the modular class is zero.
\end{proof}

Next, we compute the modular tensor field and the modular class of some cases.

\begin{exam}
\begin{enumerate}
\item Let $(M, \{~,\ldots ,~\})$  be an oriented Nambu-Poisson manifold with associated Nambu tensor $\Lambda.$ Then it follows from Remark \ref{modular-tensor} that the modular class of $M$ as a Nambu-Poisson manifold (\cite{ibanez}) is the same as the modular class of $(TM, \Lambda)$ considered as a
(tangent) Lie algebroid with Nambu structure.

\item Let $(\mathfrak g, [~, ~])$ be a Lie algebra of dimension $m.$ Consider $\mathfrak g$ as a Lie algebroid over a point. Let $X_1, X_2, \ldots, X_n \in \mathfrak g$ be linearly independent and $[X_i, X_j] =0,$ for all $i, j \in \{1, \ldots ,n\}, ~n < m.$ Then we know that $\Pi = X_1 \wedge \cdots \wedge X_n$ is a Nambu structure on $\mathfrak g$ (cf. Example \ref{lie-nam-exam} (2)). Extend $\{X_1, \ldots ,X_n\}$ to a basis $\{X_1, \ldots ,X_n, X_{n+1}, \ldots X_m\}$ of $\mathfrak g.$ Let $C^k_{i, j}$ be the structure constants with respect to this basis so that $$[X_i, X_j] = \sum_{k=1}^mC^k_{i, j}X_k.$$ Clearly, $C^k_{i, j} = 0$ for $1 \leq i, ~j \leq n.$ Observe that the modular tensor field $M^\mu \in \wedge^{n-1}\mathfrak g$ associated with 
$$\mu = X^\ast_1\wedge \cdots \wedge X^\ast_n\wedge X^\ast_{n+1}\wedge \cdots \wedge X^\ast_m \in \wedge^m\mathfrak g^\ast$$ is given by
$(\iota_\alpha M^\mu)\mu = \mathcal L_{\Pi^\sharp\alpha}\mu, ~~\alpha \in \wedge^{n-1}\mathfrak g^\ast.$ This follows from the defining condition (\ref{equation-defn-modular-field}) of the modular tensor field and the fact that $\iota_{\delta\alpha}\Pi = 0,$ for all $\alpha \in \wedge^{n-1}\mathfrak g^\ast.$ 

If $\alpha = X^\ast_1\wedge \cdots \wedge \widehat{X^\ast_i} \wedge \cdots \wedge X^\ast_n,~~~~  i \in \{1, \ldots ,n\},$ then,
\begin{align*}
\mathcal L_{\Pi^\sharp\alpha}\mu & = (-1)^{n-i}\mathcal L_{X_i}\mu\\
& = (-1)^{n-i}\delta \iota_{X_i}X^\ast_1\wedge \cdots \wedge X^\ast_n\wedge X^\ast_{n+1}\wedge \cdots \wedge X^\ast_m \\
& = (-1)^{n-i}(-1)^{i-1}\delta(X^\ast_1\wedge \cdots \wedge \widehat{X^\ast_i}\wedge \cdots \wedge X^\ast_n\wedge X^\ast_{n+1}\wedge \cdots \wedge X^\ast_m)\\
& = (-1)^{n-1}\big [(-1)^{n-1}X^\ast_1\wedge \cdots \wedge \widehat{X^\ast_i}\wedge \cdots \wedge X^\ast_n\wedge \delta(X^\ast_{n+1})\wedge \cdots \wedge X^\ast_m\\
& \hspace*{1cm} + (-1)^n X^\ast_1\wedge \cdots \wedge \widehat{X^\ast_i}\wedge \cdots \wedge X^\ast_n\wedge X^\ast_{n+1}\wedge \delta(X^\ast_{n+2}) \wedge \cdots \wedge X^\ast_m \\
& \hspace*{1cm}\vdots \\
& \hspace*{1cm}+ (-1)^{m-2} X^\ast_1\wedge \cdots \wedge \widehat{X^\ast_i}\wedge \cdots \wedge X^\ast_n\wedge X^\ast_{n+1}\wedge \cdots \wedge \delta (X^\ast_m)\big ]\\
& = (-1)^{n-1}\big [ -(-1)^{n-1}(-1)^{n-i} C^{n+1}_{i, n+1} X^\ast_1\wedge \cdots \wedge X^\ast_m\\
& \hspace*{1cm}-(-1)^n(-1)^{n-i+1}C^{n+2}_{i, n+2} X^\ast_1\wedge \cdots \wedge X^\ast_m\\
&  \hspace*{1cm}\vdots \\
&  \hspace*{1cm}-(-1)^{m-2}(-1)^{n-i+m-n-1}C^m_{i, m} X^\ast_1\wedge \cdots \wedge X^\ast_m\big ]\\
& = (-1)^{n-i +1}\big[C^{n+1}_{i, n+1} + \cdots  + C^m_{i, m}\big ]\mu.    
\end{align*}
If $\alpha = X^\ast_{j_1}\wedge \cdots \wedge X^\ast_{j_{n-1}},$ for some $j_k \notin \{1, \ldots ,n\}.$ Then $\Pi^\sharp\alpha = 0$ and hence $\iota_\alpha M^\mu = 0.$ Thus the modular tensor field $M^\mu \in \wedge^{n-1}\mathfrak g,$ considered as a linear map $\wedge^{n-1}\mathfrak g^\ast \rightarrow \mathbb R,$ is given as follows:
$M^\mu(X^\ast_{j_1}\wedge \cdots \wedge X^\ast_{j_{n-1}})$ is zero if $\{j_1, \ldots, j_{n-1}\}$ is not a subset of $\{1, \ldots, n\}$ and is equal to $(-1)^{n-i +1}(C^{n+1}_{i, n+1} + \cdots  + C^m_{i, m})$ if $(j_1, \ldots, j_{n-1}) = (1, \ldots , \hat{i}, \ldots, n),~~ 1 \leq i \leq n$.

\item Let $A$ be an oriented Lie algebroid of rank $m$ and $\mu \in \Gamma(\wedge^m A^*)$ be a nowhere vanishing section representing the 
orientation of $A$. Let $\Pi_\mu \in \Gamma(\wedge^mA)$ be the Nambu structure of order $m$ associated to the volume element $\mu \in \Gamma(\wedge^m A^*)$, as described in Proposition \ref{volume-nambu}. Note that by Proposition \ref{modular-tensor-divergence},
the modular tensor field $M^{\mu}$ associated with $\mu$ is identically zero, since $ M^{\mu} = \partial_\mu (\Pi_\mu) = *_\mu^{-1} \circ d_A \circ *_\mu (\Pi_\mu) = 0,$ as $*_\mu(\Pi_\mu)$ is identically $1$ on $M.$ Therefore, $(A, \Pi_\mu)$ is unimodular.
\item Consider the Nambu structure $\Pi_f = f \Pi_\mu$ of order $m$ as described in Remark \ref{f-nambu}. Then the modular tensor field of $\Pi_f$ associated to $\mu$ is given by
$\Pi_\mu^{\sharp}(d_Af).$
\item (Modular tensor fields of subordinate Nambu structures): Let $(A, \Pi)$ be a Lie algebroid with a Nambu structure $\Pi$ of order $n$ and $\widetilde{\Pi} = \iota_{\bar{\alpha}}\Pi$ be a subordinate Nambu structure of order $n-k \geq 3$ as described in Example \ref{sub}. 
Suppose $\mu \in \Gamma(\wedge^\text{top} A^*)$ is a nowhere vanishing section representing the 
orientation of $A$. Let $M^{\mu}_{\Pi} \in \Gamma(\wedge^{n-1}A)$ and $M^{\mu}_{\widetilde{\Pi}} \in \Gamma(\wedge^{n-k-1}A)$ denote the modular tensor fields of $\Pi$ and $\widetilde{\Pi}$ respectively associated with $\mu.$ Then from the definition of $M^{\mu}_{\widetilde{\Pi}}$ it follows that for any $\beta \in \Gamma(\wedge^{n-k-1}A^*)$,
\begin{align*}
 (\iota_{\beta} M^{\mu}_{\widetilde{\Pi}}) \mu =& \mathcal{L}_{{\widetilde{\Pi}}^{\sharp} \beta} \mu + (-1)^{n-k} (\iota_{d_A \beta} \widetilde{\Pi}) \mu \\
=& \mathcal{L}_{{\Pi}^{\sharp} (\bar{\alpha} \wedge \beta)} \mu + (-1)^{n-k} (\iota_{d_A \beta} \iota_{\bar{\alpha}}{\Pi}) \mu \\
=& \mathcal{L}_{{\Pi}^{\sharp} (\bar{\alpha} \wedge \beta)} \mu + (-1)^{n-k} (\iota_{\bar{\alpha} \wedge d_A \beta} \Pi) \mu\\ 
=& \mathcal{L}_{{\Pi}^{\sharp} (\bar{\alpha} \wedge \beta)} \mu + (-1)^n (\iota_{d_A(\bar{\alpha} \wedge \beta)} \Pi) \mu \hspace*{1cm} \text{(since $d_A \bar{\alpha} = 0$)}\\
=& (\iota_{\bar{\alpha} \wedge \beta} M^{\mu}_{\Pi}) \mu = (\iota_\beta \iota_{\bar{\alpha}} M^{\mu}_{\Pi}) \mu.
\end{align*}
Thus, the modular tensor fields are related by $M^{\mu}_{\widetilde{\Pi}} = \iota_{\bar{\alpha}} M^{\mu}_{\Pi}$.
\end{enumerate}
\end{exam}

\section{maximal nambu structure and modular class}\label{5}
In \cite{elw}, the authors introduced a notion of characteristic class of a Lie algebroid $A$ with a representation on a line bundle $L$ and used it to define the notion of modular class of a Lie algebroid. The aim of this final section is to show that for a large class of Lie algebroids with Nambu structures, more specifically, for parallelizable manifolds with maximal Numbu structures, the notion of modular class is closely related to the notion of modular class introduced in \cite{elw}. First, we briefly recall the definition of the modular class of a Lie algebroid. 
 
Let $\nabla : \Gamma A \times \Gamma L \longrightarrow \Gamma L$ be a representation of $A$ on a line bundle $L$. If $L$ is trivial and $s \in \Gamma{L}$ is a nowhere vanishing section of $L$ then define a section $ \theta_s \in \Gamma{A^*}$  by $\nabla_a s = \langle \theta_s, a \rangle s,$
for any $a \in \Gamma{A}$. It turns out that $ \theta_s$ is a $1$-cocycle in the cochain complex defining the Lie algebroid cohomology of $A$ with trivial representation. Moreover, the cohomology class represented by $ \theta_s$ is independent of the choice of $s$. This cohomology class is the characteristic class of $A$ with representation on $L$. If $L$ is non-trivial, then define the characteristic class as one half of that
of the square $L^2 = L\otimes L$ of this bundle with associated representation. For any Lie algebroid $A$, there is an intrinsic representation of $A$
on the line bundle $Q_A = \wedge^\text{top} A \otimes \wedge^\text{top}T^*M$  given by
\begin{align*}
 \nabla_a (X \otimes \nu) = [a, X] \otimes \nu + X \otimes \mathcal{L}_{\rho(a)} \nu,
\end{align*}
where $X \in \Gamma(\wedge^\text{top} A)$, $\nu \in \Gamma(\wedge^\text{top} T^*M)$, $[~,~]$ is the Gerstenhaber bracket on the multisections of $A$
and $\rho$ denotes the anchor map of the Lie algebroid $A$. The characteristic class of the Lie algebroid $A$ with the above representation on the bundle $Q_A$ is defined as the modular class of $A$. Moreover, the authors \cite{elw}  proved that the modular class of the cotangent Lie algebroid $T^\ast P$ of a Poisson manifold $P$ is twice the modular class of $P$ as a Poisson manifold defined in \cite{wein3}.

Let $A$ be an oriented Lie algebroid of rank $m$ and $\Pi$ be a non-vanishing $m$-multisection of $A$. As we have noticed before, $\Pi$ is a maximal Nambu structure on $A$ and the associated Leibniz algebroid is a Lie algebroid. In the following proposition we show that the modular class of $(A, \Pi)$ can be seen as the characteristic class of the Lie algebroid
$\mathcal{A} = \wedge^{m-1}A^*$ with a representation on the line bundle $\wedge^mA^*$, in the sense of \cite{elw}.

\begin{prop}
Let $A$ be an oriented Lie algebroid of rank $m$ and $\Pi$ be a non-vanishing $m$-multisection of $A$. Then
$ \nabla: \Gamma{\mathcal{A}} \times \Gamma(\wedge^{m}A^*) \rightarrow \Gamma(\wedge^{m}A^*) $
given by
$$ \nabla_{\alpha} \lambda = \mathcal{L}_{\Pi^{\sharp} \alpha} \lambda + (-1)^m \langle \Pi, d_A \alpha \rangle \lambda $$
for $\alpha \in \Gamma{\mathcal{A}}$, $\lambda \in \Gamma(\wedge^{m}A^*)$, defines a representation of the Lie algebroid $\mathcal{A} = \wedge^{m-1}A^*$ on the line bundle $\wedge^{m}A^*$. Moreover, the characteristic class
of the Lie algebroid $\mathcal{A} = \wedge^{m-1}A^*$ with respect to this representation is the modular class of $(A, \Pi)$.
\end{prop}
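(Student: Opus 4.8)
The plan is to establish the two assertions in turn. First I would verify that $\nabla$ is a flat $\mathcal{A}$-connection on the line bundle $\wedge^m A^*$, hence a representation of the Lie algebroid $\mathcal{A}=\wedge^{m-1}A^*$ (which is a genuine Lie algebroid by Proposition \ref{leib-lie}). The derivation rule $\nabla_\alpha(f\lambda)=f\,\nabla_\alpha\lambda+(\rho\Pi^{\sharp}(\alpha))(f)\,\lambda$, with $\rho\circ\Pi^{\sharp}$ the anchor of $\mathcal{A}$, is immediate from Remark \ref{properties-top-degree}(4), the second summand of $\nabla$ being multiplication by the function $\langle\Pi,d_A\alpha\rangle$. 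For $C^\infty(M)$-linearity in $\alpha$, expanding $\nabla_{f\alpha}\lambda$ and applying Proposition \ref{properties-Lie derivative-contraction}(6) to $\mathcal{L}_{f\Pi^{\sharp}\alpha}\lambda$ gives
\begin{align*}
\nabla_{f\alpha}\lambda=f\,\nabla_\alpha\lambda+d_Af\wedge\iota_{\Pi^{\sharp}\alpha}\lambda+(-1)^m\langle\Pi,d_Af\wedge\alpha\rangle\lambda .
\end{align*}
Since $\lambda$ is of top degree, $\xi\wedge\iota_X\lambda=\langle\xi,X\rangle\lambda$ for any $1$-form $\xi$ and section $X$ of $A$, so $d_Af\wedge\iota_{\Pi^{\sharp}\alpha}\lambda=\langle d_Af,\Pi^{\sharp}\alpha\rangle\lambda=\langle\Pi,\alpha\wedge d_Af\rangle\lambda=(-1)^{m-1}\langle\Pi,d_Af\wedge\alpha\rangle\lambda$ by the definition of $\Pi^{\sharp}$ and the contraction conventions; the last two terms of the display therefore cancel and $\nabla_{f\alpha}\lambda=f\,\nabla_\alpha\lambda$.

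For flatness I would write $\nabla_\alpha=\mathcal{L}_{\Pi^{\sharp}\alpha}+(-1)^m\langle\Pi,d_A\alpha\rangle$ as an operator on $\Gamma(\wedge^m A^*)$. Using Proposition \ref{properties-Lie derivative-contraction}(7) and the fact (Remark \ref{properties-top-degree}(4)) that the commutator of $\mathcal{L}_X$ with multiplication by a function $g$ is multiplication by $\rho(X)(g)$, one finds
\begin{align*}
[\nabla_\alpha,\nabla_\beta]=\mathcal{L}_{[\Pi^{\sharp}\alpha,\Pi^{\sharp}\beta]}+(-1)^m\big((\rho\Pi^{\sharp}\alpha)\cdot\langle\Pi,d_A\beta\rangle-(\rho\Pi^{\sharp}\beta)\cdot\langle\Pi,d_A\alpha\rangle\big).
\end{align*}
The identity $[\Pi^{\sharp}\alpha,\Pi^{\sharp}\beta]=\Pi^{\sharp}[\alpha,\beta]$ established in the proof of Proposition \ref{leib-algbd} handles the first summand, while Equation (\ref{eqn2}) from that proof, since $\Pi$ has top degree so that $\iota_{d_A\gamma}\Pi=\langle\Pi,d_A\gamma\rangle$, reads exactly $\langle\Pi,d_A[\alpha,\beta]\rangle=(\rho\Pi^{\sharp}\alpha)\cdot\langle\Pi,d_A\beta\rangle-(\rho\Pi^{\sharp}\beta)\cdot\langle\Pi,d_A\alpha\rangle$; substituting, the right-hand side becomes $\mathcal{L}_{\Pi^{\sharp}[\alpha,\beta]}+(-1)^m\langle\Pi,d_A[\alpha,\beta]\rangle=\nabla_{[\alpha,\beta]}$, which is flatness.

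It remains to identify the two classes. As $A$ is oriented, $\wedge^m A^*$ is a trivial line bundle; fix a nowhere-vanishing $\mu\in\Gamma(\wedge^m A^*)$ and, under the identification $\mathcal{A}^*=\wedge^{m-1}A$, define $\theta_\mu\in\Gamma(\wedge^{m-1}A)$ by $\nabla_\alpha\mu=\langle\theta_\mu,\alpha\rangle\mu$. Comparing with the defining relation (\ref{equation-defn-modular-field}) of the modular tensor field $M^\mu$ (here $n=m$) gives $\langle\theta_\mu,\alpha\rangle=\iota_\alpha M^\mu$ for every $\alpha$, i.e.\ $\theta_\mu=M^\mu$. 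So the characteristic $1$-cocycle of $\nabla$ and the modular tensor field are the same $C^\infty(M)$-linear cochain; since in degree one the Lie algebroid cohomology of $\mathcal{A}$ and $\mathcal{H}^1_{Leib}(\mathcal{A})$ have the same $1$-cocycles and the same $1$-coboundaries $\{d_{\mathcal{A}}f:f\in C^\infty(M)\}$, the characteristic class of $\mathcal{A}$ with this representation coincides with $[M^\mu]$, which is the modular class of $(A,\Pi)$ by Definition \ref{def-modular-class}. The only step requiring real computation is the cancellation $d_Af\wedge\iota_{\Pi^{\sharp}\alpha}\lambda=(-1)^{m-1}\langle\Pi,d_Af\wedge\alpha\rangle\lambda$ behind $C^\infty(M)$-linearity of $\nabla$ in $\alpha$; once it is in place, flatness is a short rewriting using facts already proved for Proposition \ref{leib-algbd}, and the comparison of classes only unwinds definitions, so I do not anticipate a serious obstacle.
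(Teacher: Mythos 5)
Your proof is correct and takes the route the paper intends: the paper's own proof merely asserts that the verification is ``straightforward,'' and the checks you supply --- tensoriality in $\alpha$ via the top-degree cancellation $d_Af\wedge\iota_{\Pi^{\sharp}\alpha}\lambda=(-1)^{m-1}\langle\Pi,d_Af\wedge\alpha\rangle\lambda$, flatness via $\Pi^{\sharp}[\alpha,\beta]=[\Pi^{\sharp}\alpha,\Pi^{\sharp}\beta]$ together with Equation (\ref{eqn2}), and the identification $\theta_\mu=M^\mu$ from (\ref{equation-defn-modular-field}) --- are exactly the omitted details. Your closing remark on why the $C^\infty(M)$-linear cocycle and its coboundaries may be read interchangeably in Lie algebroid and Leibniz cohomology in degree one is also the point the paper itself makes just before Theorem \ref{modular-compare}.
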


\begin{proof}
It is straightforward to verify that $\nabla$ is a representation of the Lie algebroid $\mathcal{A}$ on the line bundle $\wedge^mA^*$. The last part follows from the definition of modular class of $(A, \Pi)$.
\end{proof}

Next, assume that $M$ is an oriented manifold of dimension $m$ and $\Pi$ is a non-vanishing $m$-multivector field on $M$. Then by Remark \ref{f-nambu}, $\Pi$
is a maximal Nambu structure on the tangent Lie algebroid $TM$ and from Proposition \ref{leib-lie}, the associated Leibniz algebroid $\wedge^{m-1}T^*M$ is a Lie algebroid.
Moreover, the modular class of $(TM, \Pi),$ which is the same as the modular class of the Nambu-Poisson manifold $M,$ lies in the first Leibniz algebra cohomology of $\Gamma (\wedge^{m-1}T^*M)$. However, it is important to note that the map $ \alpha \mapsto \iota_{\alpha} M^\mu, $ defined by the modular tensor field representing this class, is actually $C^\infty(M)$-linear and hence, represents a class in the first Lie algebroid cohomology of the Lie algebroid $\mathcal{A}$ with trivial coefficients. On the other hand, the modular class of the Lie algebroid $\mathcal{A}$ defined in \cite{elw} is also a first Lie algebroid cohomology class of $\mathcal{A}.$  Thus, it is natural to compare these two notions of modular class
as elements of the first Lie algebroid cohomology of $\wedge^{m-1}T^*M$.

To compare these classes, it is enough to compare the $1$-cocycles representing them. 

Let $M$ be a parallelizable manifold of dimension $m$. We choose globally defined
$1$-forms $\eta_1, \ldots, \eta_m$ of $M$ which are linearly independent. Then $\eta = \eta_1 \wedge \cdots \wedge \eta_m$
is a volume form on $M$ and moreover,
$\bar{\eta}_i := \eta_1 \wedge \cdots \wedge \widehat{\eta}_i \wedge \cdots \wedge \eta_m,~~~~ 1\leq i \leq m$
are basis of sections of the bundle $\wedge^{m-1}T^*M.$ 

\begin{lemma}\label{lemma-final}
 For $f_1, \ldots, f_{m-1} \in C^\infty(M)$, suppose
\begin{align*}
 \mathcal{L}_{\Pi^{\sharp}(df_1 \wedge \cdots \wedge df_{m-1})} \eta_k = c_{k1}\eta_1 + \cdots + c_{km} \eta_m ,  1 \leq k \leq m
\end{align*}
where each coefficient $c_{kj}$ is a smooth function on $M$. Then
\begin{enumerate}
\item the extended Gerstenhaber bracket of the Lie algebroid $\wedge^{m-1}T^*M$ satisfies
$$[df_1 \wedge \cdots \wedge df_{m-1}, \bar{\eta}_1 \wedge \cdots \wedge \bar{\eta}_m ] = (m-1) (c_{11} + \cdots + c_{mm}) \bar{\eta}_1 \wedge \cdots \wedge \bar{\eta}_m;$$

\item the Lie derivative of the volume form satisfies
$$ \mathcal{L}_{\Pi^{\sharp}(df_1 \wedge \cdots \wedge df_{m-1})} \eta = (c_{11} + \cdots + c_{mm}) \eta.$$
\end{enumerate}
\end{lemma}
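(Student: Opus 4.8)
The strategy for both parts is uniform: set $X := \Pi^{\sharp}(df_1 \wedge \cdots \wedge df_{m-1}) \in \Gamma TM$ and use that $\mathcal{L}_X$ is a derivation of the wedge product (Proposition \ref{properties-Lie derivative-contraction}(1)). Whenever one substitutes the hypothesis $\mathcal{L}_X\eta_k = \sum_j c_{kj}\eta_j$ into a wedge of the one-forms $\eta_1,\dots,\eta_m$, each off-diagonal summand $j\neq k$ produces a repeated factor and therefore drops out, so only the diagonal coefficients $c_{kk}$ survive; the combinatorics of how often each $c_{kk}$ reappears is what produces the factor $1$ in (2) and $m-1$ in (1).

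Part (2) is then immediate. Since $\mathcal{L}_X$ is a derivation, $\mathcal{L}_X\eta = \sum_{k=1}^{m}\eta_1 \wedge \cdots \wedge \mathcal{L}_X\eta_k \wedge \cdots \wedge \eta_m = \sum_{k=1}^{m}\sum_{j} c_{kj}\,(\eta_1 \wedge \cdots \wedge \eta_j \wedge \cdots \wedge \eta_m)$, where in the inner wedge $\eta_j$ sits in the $k$-th slot. This vanishes unless $j=k$, leaving $\mathcal{L}_X\eta = (c_{11}+\cdots+c_{mm})\,\eta$.

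For part (1), I would first note that the bracket of $\alpha := df_1 \wedge \cdots \wedge df_{m-1}$ with an arbitrary section $\beta$ of $\mathcal{A} := \wedge^{m-1}T^{*}M$ reduces to $[\alpha,\beta] = \mathcal{L}_X\beta$: the bracket \eqref{leibniz-bracket} (taken with $A=TM$, $d_A=d$, $n=m$) has second term $(-1)^m(\iota_{d\alpha}\Pi)\beta$, and $d\alpha = d(df_1 \wedge \cdots \wedge df_{m-1})=0$ by the Leibniz rule for $d$ together with $d^2=0$. Because $\mathcal{A}$ is a genuine Lie algebroid (Proposition \ref{leib-lie}), property (5) of its Schouten bracket, applied to the degree-one element $\alpha$, gives $[\alpha,\bar{\eta}_1 \wedge \cdots \wedge \bar{\eta}_m] = \sum_{i=1}^{m}\bar{\eta}_1 \wedge \cdots \wedge \mathcal{L}_X\bar{\eta}_i \wedge \cdots \wedge \bar{\eta}_m$. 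Now expand $\mathcal{L}_X\bar{\eta}_i = \sum_{k\neq i}\eta_1 \wedge \cdots \wedge \mathcal{L}_X\eta_k \wedge \cdots \wedge \widehat{\eta}_i \wedge \cdots \wedge \eta_m$ and substitute the hypothesis: a summand with $\eta_j$ replacing $\eta_k$ survives only for $j=k$, contributing $c_{kk}\bar{\eta}_i$, or for $j=i$, contributing a scalar multiple of $\bar{\eta}_k$. The latter terms are then annihilated when wedged back into $\bar{\eta}_1 \wedge \cdots \wedge(-)\wedge \cdots \wedge \bar{\eta}_m$, since $\bar{\eta}_k$ (for $k\neq i$) already occurs among the remaining factors. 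Hence $\bar{\eta}_1 \wedge \cdots \wedge \mathcal{L}_X\bar{\eta}_i \wedge \cdots \wedge \bar{\eta}_m = \big(\sum_{k\neq i}c_{kk}\big)\,\bar{\eta}_1 \wedge \cdots \wedge \bar{\eta}_m$, and summing over $i$ each $c_{kk}$ is counted once for every $i\neq k$, i.e. $m-1$ times, which is exactly the claimed formula.

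The only delicate point — the "main obstacle", such as it is — is the double bookkeeping in part (1): one must track which substituted summands vanish by repetition both inside $\mathcal{L}_X\bar{\eta}_i$ and again after wedging with the remaining $\bar{\eta}_j$; the signs carried by the surviving off-diagonal terms never need to be computed, precisely because those terms die at the second stage. It is worth remarking that the factor $m-1$ is not an accident: under the canonical isomorphism $\wedge^{m}(\wedge^{m-1}T^{*}M)\cong(\wedge^{m}T^{*}M)^{\otimes(m-1)}$ the section $\bar{\eta}_1 \wedge \cdots \wedge \bar{\eta}_m$ corresponds, up to sign, to $\eta^{\otimes(m-1)}$, so part (1) is the Leibniz-rule shadow of part (2).
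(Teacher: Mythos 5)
Your proof is correct and follows essentially the same route as the paper: reduce the bracket with each $\bar{\eta}_i$ to $\mathcal{L}_{\Pi^{\sharp}(df_1\wedge\cdots\wedge df_{m-1})}\bar{\eta}_i$ using $d(df_1\wedge\cdots\wedge df_{m-1})=0$, expand by the derivation property, and observe that only the diagonal coefficients $c_{kk}$ survive the repeated-factor cancellations, each counted $m-1$ times in part (1) and once in part (2). Your explicit tracking of the $j=i$ terms that die only at the second wedging stage is a detail the paper's computation passes over silently, but the argument is the same.
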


\begin{proof}
 (i) Observe that,
\begin{align*}
 &[df_1 \wedge \cdots \wedge df_{m-1}, \bar{\eta}_1 \wedge \cdots \wedge \bar{\eta}_m ] \\
=& \sum_{i=1}^m \bar{\eta}_1 \wedge \cdots \wedge [df_1 \wedge \cdots \wedge df_{m-1}, \bar{\eta}_i] \wedge \cdots \wedge \bar{\eta}_m \\
=& \sum_{i=1}^m \bar{\eta}_1 \wedge \cdots \wedge \big( \mathcal{L}_{\Pi^{\sharp}(df_1 \wedge \cdots \wedge df_{m-1})}  \bar{\eta}_i \big) \wedge \cdots \wedge \bar{\eta}_m \hspace{0.06cm}\text{(by the definition of Leibniz bracket)}\\
=& \sum_{i=1}^m \bar{\eta}_1 \wedge \cdots \wedge \big( \sum_{k=1, k\neq i}^{m}   \eta_1 \wedge \cdots \wedge  (\mathcal{L}_{\Pi^{\sharp}(df_1 \wedge \cdots \wedge df_{m-1})} \eta_k) \wedge \cdots \wedge \eta_m  \big) \wedge \cdots \wedge \bar{\eta}_m \\
=& \sum_{i=1}^m \bar{\eta}_1 \wedge \cdots \wedge\big( (\sum_{k=1, k\neq i}^{m}  c_{kk}) \bar{\eta}_i  \big) \wedge \ldots \bar{\eta}_m = \sum_{i=1}^m (\sum_{k=1, k\neq i}^{m}  c_{kk}) \bar{\eta}_1 \wedge \cdots \wedge \bar{\eta}_m \\
=& (m-1) (c_{11} + \cdots + c_{mm}) \bar{\eta}_1 \wedge \cdots \wedge \bar{\eta}_m.
\end{align*}

(ii) Next, note that
\begin{align*}
\mathcal{L}_{\Pi^{\sharp}(df_1 \wedge \cdots \wedge df_{m-1})} \eta =& \mathcal{L}_{\Pi^{\sharp}(df_1 \wedge \cdots \wedge df_{m-1})} (\eta_1 \wedge \cdots \wedge \eta_m) \\
=& \sum_{i=1}^m \eta_1 \wedge \cdots \wedge \big( \mathcal{L}_{\Pi^{\sharp}(df_1 \wedge \cdots \wedge df_{m-1})}  \eta_i \big) \wedge \cdots \wedge \eta_m \\
=& (c_{11} + \cdots + c_{mm}) \eta_1 \wedge \cdots \wedge \eta_m = (c_{11} + \cdots + c_{mm}) \eta.
\end{align*}
\end{proof}

Recall from \cite{elw} that the representation $\nabla$ of the Lie algebroid $\mathcal{A} = \wedge^{m-1}T^*M$ on the line bundle 
$$Q_{\mathcal{A}} = \wedge^\text{top} \mathcal{A} \otimes \wedge^\text{top}T^*M,$$ is given by 
\begin{align*}
 &\nabla_{df_1 \wedge \cdots \wedge df_{m-1}} ((\bar{\eta}_1 \wedge \cdots \wedge \bar{\eta}_m) \otimes \eta)\\
=& [df_1 \wedge \cdots \wedge df_{m-1}, \bar{\eta}_1 \wedge \cdots \wedge \bar{\eta}_m ] \otimes \eta + (\bar{\eta}_1 \wedge \cdots \wedge \bar{\eta}_m) \otimes \mathcal{L}_{\Pi^{\sharp}(df_1 \wedge \cdots \wedge df_{m-1})} \eta \\
=& (m-1) (c_{11} + \cdots + c_{mm}) (\bar{\eta}_1 \wedge \cdots \wedge \bar{\eta}_m) \otimes \eta + (\bar{\eta}_1 \wedge \cdots \wedge \bar{\eta}_m) \otimes \mathcal{L}_{\Pi^{\sharp}(df_1 \wedge \cdots \wedge df_{m-1})} \eta \\
& \hspace*{8cm}\mbox{(by Lemma \ref{lemma-final}(1))}\\
=& (m-1)(\bar{\eta}_1 \wedge \cdots \wedge \bar{\eta}_m) \otimes (c_{11} + \cdots + c_{mm})\eta + (\bar{\eta}_1 \wedge \cdots \wedge \bar{\eta}_m) \otimes \mathcal{L}_{\Pi^{\sharp}(df_1 \wedge \cdots \wedge df_{m-1})} \eta \\
=& m  (\bar{\eta}_1 \wedge \cdots \wedge \bar{\eta}_m) \otimes \mathcal{L}_{\Pi^{\sharp}(df_1 \wedge \cdots \wedge df_{m-1})} \eta \hspace*{2cm}\mbox{(by Lemma \ref{lemma-final} (2))}\\ 
=& m  (\bar{\eta}_1 \wedge \cdots \wedge \bar{\eta}_m) \otimes M^{\eta} (df_1 \wedge \cdots \wedge df_{m-1}) \eta\\
=& m M^{\eta} (df_1 \wedge \cdots \wedge df_{m-1}) (\bar{\eta}_1 \wedge \cdots \wedge \bar{\eta}_m) \otimes \eta.
\end{align*}

Thus, from the definition of modular class of a Lie algebroid, we have
\begin{align*}
 \theta_{(\bar{\eta}_1 \wedge \cdots \wedge \bar{\eta}_m) \otimes \eta} (df_1 \wedge \cdots \wedge df_{m-1}) = m M^{\eta} (df_1 \wedge \cdots \wedge df_{m-1}).
\end{align*}
Since both sides are $C^\infty(M)$-linear, the $(m-1)$-vector fields are related by
\begin{align*}
 \theta_{(\bar{\eta}_1 \wedge \cdots \wedge \bar{\eta}_m) \otimes \eta} = m M^{\eta}.
\end{align*}

Summarizing the above discussions we obtain the following result.
\newpage
\begin{thm}\label{modular-compare}
Let $M$ be a parallelizable manifold of dimension $m$. Let $\Pi$ be a non-vanishing $m$-vector field on $M$. Then the modular class of the 
Lie algebroid $\wedge^{m-1}T^*M$ is $m$-times the modular class of the Nambu-Poisson manifold $(M, \Pi),$ both being considered as elements of the first Lie algebroid cohomology of $\wedge^{m-1}T^*M$.
\end{thm}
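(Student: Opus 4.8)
The plan is to reduce the statement to the explicit cocycle computation carried out just above, organising it as follows. First I would recall that, by Remark \ref{f-nambu}, the non-vanishing $m$-vector field $\Pi$ is a maximal Nambu structure on the tangent Lie algebroid $TM$, and that by Proposition \ref{leib-lie} the associated Leibniz algebroid $\wedge^{m-1}T^\ast M$ is in fact a Lie algebroid, which I denote $\mathcal A$. On the one hand, the modular class of $\mathcal A$ in the sense of Evens-Lu-Weinstein is, by definition, the characteristic class of $\mathcal A$ with respect to the intrinsic representation $\nabla$ on the line bundle $Q_{\mathcal A} = \wedge^\text{top}\mathcal A \otimes \wedge^\text{top}T^\ast M$. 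On the other hand, the modular class of the Nambu-Poisson manifold $(M,\Pi)$ coincides, by Remark \ref{modular-tensor}, with the modular class of $(TM,\Pi)$, represented by the modular tensor field $M^\eta$; since the associated $1$-cochain $\alpha \mapsto \iota_\alpha M^\eta$ is $C^\infty(M)$-linear, this class lies in the first Lie algebroid cohomology of $\mathcal A$, so that the two classes are genuinely comparable in the same group.

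Next, using parallelizability I would fix global $1$-forms $\eta_1,\dots,\eta_m$ that are pointwise linearly independent, so that $\eta = \eta_1\wedge\cdots\wedge\eta_m$ is a volume form on $M$ and $\bar\eta_1\wedge\cdots\wedge\bar\eta_m$ is a nowhere vanishing section of $\wedge^\text{top}\mathcal A$; hence $s := (\bar\eta_1\wedge\cdots\wedge\bar\eta_m)\otimes\eta$ is a nowhere vanishing section of $Q_{\mathcal A}$. Applying $\nabla_{df_1\wedge\cdots\wedge df_{m-1}}$ to $s$ and expanding the Gerstenhaber-bracket term by Lemma \ref{lemma-final}(1) and the Lie-derivative term by Lemma \ref{lemma-final}(2), the factor $m-1$ from the first term and the single copy from the second combine to a total factor $m$, and one reads off $\nabla_{df_1\wedge\cdots\wedge df_{m-1}} s = m\,M^\eta(df_1\wedge\cdots\wedge df_{m-1})\,s$. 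By the defining relation $\nabla_a s = \langle \theta_s, a\rangle s$ this says $\theta_s(df_1\wedge\cdots\wedge df_{m-1}) = m\,M^\eta(df_1\wedge\cdots\wedge df_{m-1})$; since both sides are $C^\infty(M)$-linear in the $(m-1)$-form argument and forms of the type $df_1\wedge\cdots\wedge df_{m-1}$ span $\wedge^{m-1}T^\ast M$ locally, I conclude that $\theta_s = m\,M^\eta$ as $(m-1)$-vector fields, hence as $1$-cocycles in the Lie algebroid cohomology of $\mathcal A$. Passing to cohomology classes (using that $[\theta_s]$ is independent of $s$, as recalled above) then yields the asserted relation.

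The only delicate point I anticipate is the bookkeeping needed to legitimise the comparison rather than any new estimate: one must observe that on a parallelizable $M$ the Evens-Lu-Weinstein modular class, which in general is defined via the square $L^2$ of a line bundle, reduces to the cocycle $\theta_s$ above since $Q_{\mathcal A}$ is already trivial, and that the modular class of the Nambu-Poisson manifold, which a priori lives in a Leibniz cohomology group, is the image under the natural map of a Lie algebroid cohomology class because its representing cochain is $C^\infty(M)$-linear. Both facts are immediate from parallelizability together with the $C^\infty(M)$-linearity of the cochains involved, so the substance of the argument is precisely the identity $\theta_s = m\,M^\eta$ extracted from Lemma \ref{lemma-final}.
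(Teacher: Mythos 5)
Your proposal is correct and follows essentially the same route as the paper: it identifies the Evens--Lu--Weinstein cocycle by applying the intrinsic representation $\nabla$ to the section $(\bar\eta_1\wedge\cdots\wedge\bar\eta_m)\otimes\eta$ of $Q_{\mathcal A}$, expands via Lemma \ref{lemma-final}(1) and (2) so that the factors $m-1$ and $1$ combine to $m$, and concludes $\theta_s = m\,M^\eta$ at the level of cocycles before passing to cohomology. The additional remarks you make about $C^\infty(M)$-linearity legitimising the comparison and about $Q_{\mathcal A}$ being trivial on a parallelizable manifold are exactly the points the paper relies on implicitly.
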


\begin{corollary}
Under the hypothesis of Theorem \ref{modular-compare}, if $\Pi$ is the $m$-vector field associated to a volume form, then the modular class of the Lie algebroid
$\wedge^{m-1}T^*M$ is zero.
\end{corollary}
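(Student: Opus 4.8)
The plan is to reduce the statement to the vanishing of a single modular tensor field and then to exhibit a volume form with respect to which that field is identically zero. By Theorem \ref{modular-compare}, the modular class of the Lie algebroid $\wedge^{m-1}T^*M$ equals $m$ times the modular class of the Nambu-Poisson manifold $(M, \Pi)$, both viewed inside the first Lie algebroid cohomology of $\wedge^{m-1}T^*M$. Hence it suffices to show that the modular class of $(M, \Pi)$, equivalently of the tangent Lie algebroid $(TM, \Pi)$ with its maximal Nambu structure, is zero; and since the modular class is represented by the modular tensor field $M^\mu$ for any choice of orientation form $\mu$, it is enough to produce one such $\mu$ for which $M^\mu \equiv 0$.

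First I would use the hypothesis: $\Pi$ is the $m$-vector field associated to a volume form, so $\Pi = \Pi_\eta$ for some nowhere vanishing $\eta \in \Gamma(\wedge^m T^*M)$ in the sense of Proposition \ref{volume-nambu}. The key move is to compute the modular tensor field using this very $\eta$ as the orientation form. By Proposition \ref{modular-tensor-divergence} we have $M^\eta = \partial_\eta(\Pi) = {*_\eta}^{-1} \circ d_A \circ *_\eta (\Pi)$, where $*_\eta(\Pi) = \iota_{\Pi}\,\eta = \langle \Pi, \eta \rangle$. Now Proposition \ref{volume-nambu} establishes precisely that $\langle \Pi_\eta, \eta \rangle = 1$ identically on $M$, so $*_\eta(\Pi)$ is the constant function $1$. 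Therefore $d_A(*_\eta \Pi) = d_A(1) = 0$, and consequently $M^\eta = {*_\eta}^{-1}(0) = 0$.

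Thus the modular tensor field associated with $\eta$ vanishes, so the modular class of $(M, \Pi)$ is the zero class in the first Leibniz (here Lie, by Proposition \ref{leib-lie}) algebra cohomology. Feeding this into Theorem \ref{modular-compare} gives that the modular class of $\wedge^{m-1}T^*M$ is $m \cdot 0 = 0$, as claimed. I do not expect any genuine obstacle: the entire content is the single observation that the distinguished volume form $\eta$ defining $\Pi$ is itself invariant, which forces the divergence $\partial_\eta(\Pi)$ to vanish. This is also exactly what one would anticipate from Proposition \ref{modular-class-obstruction}, since $\eta$ serves as the density invariant under all Hamiltonian $A$-sections, witnessing the triviality of the class.
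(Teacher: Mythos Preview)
Your proof is correct and follows exactly the approach implicit in the paper: combine Theorem \ref{modular-compare} with the already-established fact (Example after Definition \ref{def-modular-class}, item (3)) that $(A,\Pi_\mu)$ is unimodular because $*_\mu(\Pi_\mu)=\langle\Pi_\mu,\mu\rangle=1$ forces $M^\mu=\partial_\mu(\Pi_\mu)=0$. The paper leaves the corollary without explicit proof, and your argument fills in precisely the intended details.
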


\begin{corollary}
Let $M$ be any closed oriented $3$-manifold and $\Pi$ be any $3$-vector field on $M$. Then the modular class of the Nambu-Poisson manifold $(M, \Pi)$ is 
$$\frac{1}{3} \times ~~\mbox{modular class of the Lie algebroid}~~\wedge^2T^*M.$$
\end{corollary}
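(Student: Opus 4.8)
The plan is to reduce the statement to Theorem \ref{modular-compare} applied with $m=3$; the only point requiring attention is that the hypothesis of that theorem --- parallelizability of $M$ --- holds automatically here. So the first step I would carry out is to recall the classical theorem of Stiefel that every closed oriented $3$-manifold has trivial tangent bundle, hence is parallelizable.

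For completeness I would include the standard obstruction-theoretic argument for this fact, applied to the oriented frame bundle of $TM$, which has fibre $SO(3)$. The successive obstructions to a global trivialization, built skeleton by skeleton, lie in $H^{i}(M;\pi_{i-1}(SO(3)))$ for $1\le i\le 3$. For $i=1$ the coefficient group $\pi_{0}(SO(3))$ is trivial since $SO(3)$ is connected; for $i=2$ the obstruction is $w_{2}(M)$, and Wu's formula on a closed oriented $3$-manifold gives $w_{2}(M)=w_{1}(M)^{2}=0$; and for $i=3$ the coefficient group $\pi_{2}(SO(3))$ is zero, being $\pi_{2}$ of a Lie group. Hence every obstruction vanishes, $TM$ is trivial, and $M$ is parallelizable.

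Granting parallelizability, the corollary follows at once. Since $M$ is oriented of dimension $m=3$, any $3$-vector field $\Pi$ on $M$ is a maximal Nambu structure on the tangent Lie algebroid $TM$ by Remark \ref{f-nambu}, and the associated Leibniz algebroid $\wedge^{2}T^{*}M$ is a genuine Lie algebroid by Proposition \ref{leib-lie} and the remark preceding it, the continuity argument in that proof also taking care of the set where $\Pi$ vanishes. Theorem \ref{modular-compare} with $m=3$ then asserts that the modular class of the Lie algebroid $\wedge^{2}T^{*}M$ is $3$ times the modular class of the Nambu-Poisson manifold $(M,\Pi)$, both viewed inside the first Lie algebroid cohomology of $\wedge^{2}T^{*}M$, which is a real vector space; dividing this identity by $3$ gives the assertion. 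The main --- essentially the only --- obstacle is the topological input of parallelizability; after that, no computation is needed, as the rest is a verbatim specialization of Theorem \ref{modular-compare} to $m=3$.
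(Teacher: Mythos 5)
Your argument is correct and coincides with the paper's intended (unwritten) proof: the corollary is stated as an immediate specialization of Theorem \ref{modular-compare} to $m=3$, the only extra input being Stiefel's theorem that every closed oriented $3$-manifold is parallelizable, which you justify by the standard obstruction-theoretic computation ($w_2(M)=w_1(M)^2=0$ by Wu's formula and $\pi_2(SO(3))=0$). The only caveat, inherited from the paper's own phrasing rather than introduced by you, is that Theorem \ref{modular-compare} is stated for a \emph{non-vanishing} $m$-vector field while the corollary allows ``any'' $3$-vector field, so strictly speaking your appeal to the theorem carries that same hypothesis.
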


We conclude with the following remark.

\begin{remark}\label{question}
In \cite{grabowski-poncin}, Grabowski et al. introduced the notion of Loday algebroids. We have noticed before (cf. Remark \ref{loday-algebroid}) that the Leibniz algebroid $(\wedge^{n-1}A^*, [ ~,~ ], \rho \circ \Pi^{\sharp})$ of Proposition \ref{leib-algbd} associated to a Lie algebroid $A$ with a Nambu structure $\Pi$ is a Loday algebroid. It would be interesting to formulate a notion of the modular class of Loday algebroids (or more generally, of  Leibniz algebroid) and  explore the relationship between the modular class of $(A, \Pi),$ as introduce in this paper, and the modular class of the Loday (or Leibniz) algebroid $(\wedge^{n-1}A^*, [ ~,~ ], \rho \circ \Pi^{\sharp}).$ This would be a higher order generalization of the result of Evens-Lu-Weinstein \cite{elw} for Poisson manifolds.   
\end{remark}
\providecommand{\bysame}{\leavevmode\hbox to3em{\hrulefill}\thinspace}
\providecommand{\MR}{\relax\ifhmode\unskip\space\fi MR }
\providecommand{\MRhref}[2]{%
  \href{http://www.ams.org/mathscinet-getitem?mr=#1}{#2}
}
\providecommand{\href}[2]{#2}

\end{document}